\newtheorem{theorem}{Theorem}
\newtheorem{proposition}{Proposition}
\newtheorem{lemma}{Lemma}
\newtheorem{corollary}{Corollary}
\crefname{equation}{equation}{equations}
\crefname{assumption}{assumption}{assumptions}
\providecommand\given{}
\newcommand\SetSymbol[1][]{
  \nonscript\,#1:\nonscript\,\mathopen{}\allowbreak}
\DeclarePairedDelimiterX\Set[1]{\lbrace}{\rbrace}%
{ \renewcommand\given{\SetSymbol[]} #1 }
\DeclareMathOperator{\Ind}{\mathbbm{1}}
\newcommand{\iid}{\overset{\mathrm{i.i.d}}{\sim}}
\title[Posterior concentration rates for mixtures of Gaussians]{Tails
assumptions and posterior concentration rates for mixtures of Gaussians}
\author{Zacharie Naulet \and Judith Rousseau}
\address{CEREMADE, Université Paris-Dauphine, France}
\email{zacharie.naulet@dauphine.eu; rousseau@ceremade.dauphine.fr}
\date{\today}
\begin{document}

\begin{abstract}
  Nowadays in density estimation, posterior rates of convergence for location and
location-scale mixtures of Gaussians are only known under light-tail
assumptions; with better rates achieved by location mixtures. It is conjectured,
but not proved, that the situation should be reversed under heavy tails
assumptions. The conjecture is based on the feeling that there is no need to
achieve a good order of approximation in regions with few data (say, in the
tails), favoring location-scale mixtures which allow for spatially varying order
of approximation. Here we test the previous argument on the Gaussian errors mean
regression model with random design, for which the light tail assumption is not
required for proofs. Although we cannot invalidate the conjecture due to the
lack of lower bound, we find that even with heavy tails assumptions,
location-scale mixtures apparently perform always worst than location
mixtures. However, the proofs suggest to introduce \textit{hybrid}
location-scale mixtures that are find to outperform both location and
location-scale mixtures, whatever the nature of the tails. Finally, we show that
all tails assumptions can be released at the price of making the prior
distribution covariate dependent.

\end{abstract}

\maketitle

\section{Introduction}
\label{sec:introduction}

Nonparametric mixture models are highly popular in the Bayesian nonparametric
literature, due to both their reknown flexibility and relative easiness of
implementation, see \citet{hjort:holmes:mueller:2010} for a review. They have
been used in particular for density estimation, clustering and classification
and recently nonparametric mixture models have also been proposed in nonlinear
regression models, see for instance \citet{dejonge:vzanten:09,
  WolpertClydeTu2011, NauletBarat2015}.

There is now a large literature on posterior concentration rates for
nonparametric mixture models, initiated by \citet{Ghosal2001,Ghosal2007} and
improved by \citet{kruijer:rousseau:vdv:10,ShenTokdarGhosal2013,scricciolo:12}
in the context of location mixtures of Gaussian distributions and studied by
\citet{Canale2013} in the context of location-scale Gaussian distributions and
\citet{dejonge:vzanten:09} in the case of location mixture models for nonlinear
regression.

Location mixture of Gaussian densities can be writen as
\begin{equation}
  \label{eq:location}
  f_{\sigma, G} (x) = \int_{\mathbb R}  \varphi_\sigma(x-\mu) dG(\mu),
\end{equation}
while location-scale mixtures have the form
\begin{equation}
  \label{eq:locationscale}
  f_{G} (x) = \int_{\mathbb R\times \mathbb R^+}  \varphi_\sigma(x-\mu) dG(\mu,
  \sigma).
\end{equation}

These models are used in the Bayesian nonparametric literature to model smooth
curves, typically probability densities, by putting a prior on the mixing
distribution $G$ (and on $\sigma$ for location mixtures
\eqref{eq:location}). The most popular prior distributions on $G$ are either
finite with unknown number of components, as in \citet{kruijer:rousseau:vdv:10}
and the reknown Dirichlet Process (\citet{Ferguson1973}) or some of its
extensions. In both cases $G$ is discrete almost surely.

In \citet{kruijer:rousseau:vdv:10} and later on in
\citet{ShenTokdarGhosal2013,scricciolo:12} it was proved that location mixture
of Gaussian distributions lead to adaptive (nearly) optimal posterior
concentration rates (for $L_1$ metrics) over collections of H\"older types
functional classes, in the context of density estimation for independently and
identically distributed random variables. Contrarywise, in \citet{Canale2013},
suboptimal posterior concentration rates are derived and the authors obtain
rates that are at best $n^{-\beta/(2\beta+2)}$ up to a $\log n$ term in place of
$n^{-\beta/(2 \beta + 1)}$.  These results are obtained under strong assumptions
on the tail of the true density $f_0$, since it is assumed that
$f_0(x) \lesssim e^{-c|x|^{\tau}}$ when $x$ goes to infinity, for some positive
$c, \tau $.

In \citet{Canale2013}, the authors suggest that location-scale mixtures might
lead to suboptimal posterior concentration rates, for light tail distributions
but might be more robust to tails, since the rate $n^{-\beta/(2\beta+2)}$ is the
minimax estimation rate for density estimation with regularity $\beta$, under
the $L_2$ loss, see \citet{reynaud:rivoirard:tuleau11,Goldenshluger2014}.

The question thus remains open as to how robust to tails mixtures of Gaussian
distributions (either location or location-scale) are.

Interestingly in \citet{bochkina:rousseau:16}, much weaker tail constraints are
necessary to achieve the minimax rate $n^{-\beta /(2\beta +1)}$, for estimating
densities on $\mathbb R^+$ using mixtures of Gamma distributions. The authors
merely require that $F_0$ allows for a moment of order striclty greater than
2. However in \citet{bochkina:rousseau:16} as well as in
\citet{kruijer:rousseau:vdv:10,ShenTokdarGhosal2013,scricciolo:12}, the
smoothness functional classes are non standard and roughly correspond to
requiring that the log-density is locally H\"older, which blurs the
understanding of the robustness of Gaussian mixtures to tails. These smoothness
conditions are required to ensure that the density $f_0$ can be approximated by
a mixture $f_{\sigma, G} $ where $G$ is a probability measure in terms of
Kullback-divergence. Hence to better understand the ability of mixture models to
capture heavy tails we study their use in nonparametric regression models:
\begin{equation}
  \label{eq:model}
  \begin{split}
    Y_i &= f(X_i) + \epsilon_i,\quad \epsilon_i \stackrel{i.i.d}{\sim} N(0,s^2),
    \quad i=1,\dots,n,\\
    X_1,\dots,X_n &\iid Q_0, \quad f \in L^2(Q_0).
  \end{split}
\end{equation}

The parameter is $f$ with prior distribution denoted by $\Pi$. We assume that
$s$ is known, which is just a matter of convenience for proofs. All the results
of the paper can be translated to the case $s$ unknown using the same
methodology as \citet{Salomond2013} or \citet{NauletBarat2015}. Our aim is to
study posterior concentration rates in $L^2(Q_0)$ around the true regression
function $f_0$ defined by sequences $\epsilon_n$ converging to zero with $n$ and
such that
\begin{equation}
  \label{eq:rate}
  \Pi\left( d_n(f,f_0) \leq \epsilon_n \mid
    \mathbf y^n, \mathbf x^n \right) = 1 + o_p(1),
\end{equation}
under the model $f_0$, where $d_n$ is the empirical $\ell_2$ distance of the
covariates, defined as
$d_n(f,f_0)^2 := n^{-1}\sum_{i=1}^n|f(x_i) - f_0(x_i)|^2$. By analogy to the
case of density estimation of \citet{reynaud:rivoirard:tuleau11} and
\citet{Goldenshluger2014} we assume that $f_0 \in L^1$ and belongs to a H\"older
ball with smoothness $\beta$. The tail condition are then on the design
distribution and written as $\int_{\mathbb R} |x|^p dQ_0(x) <+\infty$ and our
aim is to study the posterior concentration rate \eqref{eq:rate} for both
location and location-scale mixtures.

We show in \cref{sec:post-conv-rates}, that in most cases location mixtures have
a better posterior concentration rate than location-scale mixtures and unless
$p $ goes to infinity the posterior concentration rates is not as good as the
usual $n^{-\beta/(2\beta+1)}$. This rate is suboptimal for light tail design
points, since in this case the minimax posterior concentration rate is given by
$n^{-\beta/(2\beta+1)}$. To improve on this rate we propose a new version of
location-scale mixture models, which we call the hybrid location-scale mixture
and we show that this nonparametric mixture model leads to better posterior
concentration rates than the location mixture (and thus than the location-scale
mixture). All these results are up to $\log n$ terms. The results are summarized
in \cref{tab:summary} which displays the value $q$ defined by
$\epsilon_n^2 = n^{-q}$.
\begin{table}[!htb]
  \centering
  \caption{Summary of posterior rates of convergence for different types of
    mixtures. The rates are understood to be in the form $\epsilon_n^2 =
    n^{-q}$, up to powers of $\log n$ factors, where $q$ is given below.}
  \label{tab:summary}
  \resizebox{\columnwidth}{!}{%
  \begin{tabular}{l|c|c|c|c}
    \toprule
    & \multicolumn{2}{c|}{$0 < p < 2$}
    & \multicolumn{2}{c}{$p \geq 2$}\\
    \midrule
    & $p < 2\beta/(\beta+1)$
    & $p \geq 2\beta/(\beta+1)$
    & $p  < 2\beta$
    & $p \geq 2\beta$\\
    \midrule
    Location
    & $\displaystyle\frac{2\beta}{3\beta + 1}$
    & $\displaystyle\frac{2\beta}{3\beta + 1}$
    & $\displaystyle\frac{2\beta}{2\beta + 1 + 2\beta/p}$
    & $\displaystyle\frac{2\beta}{2\beta + 1 + 2\beta/p}$\\
    Location-scale
    & $\displaystyle\frac{2\beta}{3\beta + 2}$
    & $\displaystyle\frac{2\beta}{2\beta + 1 + 2\beta/p}$
    & $\displaystyle\frac{2\beta}{2\beta + 1 + 2\beta/p}$
    & $\displaystyle\frac{\beta}{\beta + 1}$\\
    Hybrid
    & $\displaystyle\frac{2\beta}{3\beta + 1}$
    & $\displaystyle\frac{p}{p + 1}$
    & $\displaystyle\frac{p}{p + 1}$
    & $\displaystyle\frac{2\beta}{2\beta + 1}$\\
    \bottomrule
  \end{tabular}
  }
\end{table}

Although the results are presented in the regression model, we believe that
similar phenomena should take place in the density estimation problem.

The main results with the description of the three types of prior models and the
associated posterior concentration rates are presented in
\cref{sec:post-conv-rates}. Proofs are presented in \cref{sec:bound-post-distr}
and some technical lemmas are proved in the appendix.

\subsection{Notations}
\label{sec:notations}

\par We call $P_f(\cdot \mid X)$ the distribution of the random variable
$Y \mid X$ under the model \eqref{eq:model}, associated with the regression
function $f$. Given $(X_1,\dots,X_{n})$, $P_f^n(\cdot \mid X_1,\dots,X_n)$
stands for the distribution of the random vector $(Y_1,\dots,Y_n)$ of
independent random variables $Y_j \sim P_f(\cdot \mid X_j)$. Also, for any
random variable $X$ with distribution $P$, and any function $g$, $Pg(X)$ denote
the expectation of $g(X)$.

\par For any $\alpha > 0$, we let $\mathrm{SGa}(\alpha)$ denote the symmetric
Gamma distribution with parameter $\alpha$; that is $X \sim
\mathrm{SGa}(\alpha)$ has the distribution of the difference of two independent
Gamma random variables with parameters $(\alpha,1)$.

\par For any finite positive measure $\alpha$ on the measurable space
$(X,\mathcal{X})$, let $\Pi_{\alpha}$ denote the symmetric Gamma process
distribution with parameter $\alpha$ \citep{WolpertClydeTu2011,NauletBarat2015};
that is, an $M \sim \Pi_{\alpha}$ is a random signed measure on
$(X,\mathcal{X})$ such that far any disjoints $B_1,\dots, B_k \in \mathcal{X}$
the random variables $M(B_1),\dots,M(B_k)$ are independent with distributions
$\mathrm{SGa}(\alpha(B_i))$, $i=1,\dots,k$.

\par For any $\beta > 0$, we let $\mathtt C^{\beta}$ denote the H\"older space
of order $\beta$; that is the set of all functions
$f : \mathbb R \rightarrow \mathbb R$ that have bounded derivatives up to order
$m$, the largest integer smaller than $\beta$, and such that the norm
$\|f\|_{\mathtt C^{\beta}} := \sup_{k\leq m} \sup_{x\in \mathbb R}|f^{(k)}(x)| +
\sup_{x\ne y}|f^{(m)}(x) - f^{(m)}(y)|/|x - y|^{\beta - m}$ is finite.

\par For $1 \leq p < \infty$ we let $L^p$ be the space of function for which the
norm $\|f\|_p^p := \int |f(x)|^p\,dx$ is finite; and by $L^{\infty}$ we mean the
space of functions for which $\|f\|_{\infty} := \sup_{x\in \mathbb R}|f(x)|$ is
finite. For $0 \leq p,q\leq \infty$ and functions $f \in L^p$, $g\in L^q$, we
write $f*g$ the convolution of $f$ and $g$, that is $f*g(x) := \int f(x
-y)g(y)\, dy$ for all $x\in \mathbb R$. Moreover, we'll use repeatedly Young's
inequality which state that $\|f*g\|_r \leq \|f\|_p\|g\|_q$, with $1/p + 1/q =
1/r + 1$.

\par If $f\in L^1$, then we define $\widehat{f}$ as the $(L^1)$ Fourier
transform of $f$; that is $\widehat{f}(\xi) := \int f(x)e^{-i\xi x}\, dx$ for
all $\xi \in \mathbb R$. Moreover, if $\widehat{f} \in L^1$, then the inverse
Fourier transform is well-defined and
$f(x) = (2\pi)^{-1}\int \widehat{f}(\xi) e^{ix \xi}\, d\xi$. Also, we denote by
$\mathcal{S}$ the Schwartz space; that is the space of infinitely differentiable
functions $f : \mathbb R \rightarrow \mathbb R$ for which
$|x^{r}f^{(k)}(x)| < +\infty$ for all $r > 0$ and all $k \in \mathbb N$. Then
$\mathcal{S} \subset L^1$, and it is well known that the Fourier transform maps
$\mathcal{S}$ onto itself, thus the Fourier transform is always invertible on
$\mathcal{S}$. We note
$\|f\|_{r,k} = \sup\{|x|^r |f^{(k)}(x) | , x \in \mathbb R \} $ for any
$f \in \mathcal S$.

\par For two real numbers $a,b$, the notation $a \wedge b$ stand for the minimum
of $a$ and $b$ whereas $a \vee b$ stand for the maximum. Similarly, given two
real valued functions $f,g$ the function $f\wedge g$ is the function which at
$x$ assigns the minimum of $f(x)$ and $g(x)$ and $f\vee g$ has obvious
definition. Throughout the paper $C$ denotes a generic constant.

\par Inequalities up to a generic constant are denoted by $\lesssim$ and
$\gtrsim$.

\section{Posterior convergence rates for Symmetric Gamma mixtures}
\label{sec:post-conv-rates}

In this section we present the main results of the paper. We first present the
three types of priors that are studied; i.e. location mixtures, location - scale
mixtures and hybrid location-scale mixtures and for each of these families of
priors we provide the associated posterior concentration rates.

Recall that we consider observations $(Y_i,X_i)_{i=1}^n$ independent and
identically distributed according to model \eqref{eq:model} and we note
$\mathbf y^n = (Y_1, \cdots, Y_n)$ and $\mathbf x^n = (X_1, \cdots, X_n)$. We
denote the prior and the posterior distribution on $f$ by $\Pi(\cdot ) $ and
$\Pi(\cdot \mid \mathbf y^n, \mathbf x^n ) $ respectively.

\subsection{Family of priors}
\label{sec:family-priors}

\subsubsection{Location mixtures of Gaussians}
\label{sec:locat-mixt-gauss}

\par A symmetric Gamma process location mixture of Gaussians prior $\Pi$ is the
distribution of the random function $f(x) := \int \varphi((x - \mu)/\sigma)\,
dM(\mu)$ where $\sigma \sim G_{\sigma}$ and $M \sim \Pi_{\alpha}$, with $\alpha$
a finite positive measure on $\mathbb R$, $G_{\sigma}$ a probability measure
on $(0,\infty)$ and $\varphi(x) := e^{-x^2/2}$ for all $x\in \mathbb R$.

\par We restrict our discussion to priors for which the following conditions are
verified. We assume that there are positive constants $a_1,a_2,a_{3}$ and
$b_1,b_2,b_3,b_4$ such that $G_{\sigma}$ satisfies for $x \geq 1$
\begin{gather}
  \label{eq:15}
  G_{\sigma}\left(\sigma > x\right) \lesssim \exp( - a_1 x^{b_1})\\
  \label{eq:16}
  G_{\sigma}\left(\sigma \leq 1/x\right) \lesssim \exp(- a_2 x^{b_2})\\
  \label{eq:17}
  G_{\sigma}\left(x^{-1} \leq \sigma \leq x^{-1}(1 + t)\right) \gtrsim x^{b_3}
  t^{b_4}\exp(-a_3x),\quad \forall t\in (0,1).
\end{gather}
We let $\alpha := \overline{\alpha} G_{\mu}$ for a positive constant
$\overline{\alpha} > 0$ and $G_{\mu}$ a probability distribution on $\mathbb
R$. We assume that there are positive constants $b_5,b_6$ such that $G_{\mu}$
satisfies for all $x \in \mathbb R$
\begin{equation}
  \label{eq:18}
  G_{\mu}\left(|\mu - x| \leq t\right) \gtrsim t^{b_5}(1 +
  |x|)^{-b_6}, \quad \forall t  \in (0,1).
\end{equation}
The heavy tail condition on $G_{\mu}$ is required to not deteriorate the rate of
convergence when $Q_0$ is heavy tailed.

\par Notice that \cref{eq:15} forbids the use of the classical inverse-Gamma
distribution as prior distribution on $\sigma$ because of its heavy tail. In
fact, it is always possible to weaken \cref{eq:15} to allow for Inverse-Gamma
distribution (see \cite{Canale2013,NauletBarat2015}) but it complicates the
proofs with no contribution to the subject of the paper. We found that among the
usual distributions the inverse-Gaussian is more suitable for our purpose since
it fulfills all the \cref{eq:15,eq:16,eq:17}, as shown in \cref{pro:11}. We
recall that the inverse-Gaussian distribution on $(0,\infty)$ with parameters
$a>0$, $b > 0$ has density with respect to Lebesgue measure
\begin{equation*}
  f(x;a,b)
  := \left(\frac{b}{2\pi x^3}\right)^{1/2} \exp\left(-\frac{b(x -a)^2}{2a^2 x}
  \right),\quad \forall x > 0,
\end{equation*}
and $f(x;a,b) = 0$ elsewhere.

\begin{proposition}
  \label{pro:11}
  The inverse-Gaussian distribution with parameters $b,a > 0$
  satisfies \cref{eq:15,eq:16,eq:17} with $a_1 = b / (2a^2)$, $b_1 = 1$,
  $a_2 = b / 4$, $b_2 = 1$, $b_3 = 1$, $b_4 =1$ and
  $a_3 = b / 2$.
\end{proposition}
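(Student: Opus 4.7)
The three bounds all reduce to Laplace-type estimates on the inverse-Gaussian density once one rewrites the exponent as
\begin{equation*}
-\frac{b(s-a)^2}{2a^2 s} \;=\; -\frac{bs}{2a^2}\;+\;\frac{b}{a}\;-\;\frac{b}{2s},
\end{equation*}
a decomposition that cleanly separates the large-$s$ and small-$s$ regimes. Throughout, it is enough to argue for $x\geq x_0$ with $x_0$ some large constant, and to absorb the bounded range $x\in[1,x_0]$ into the implicit constants by continuity and strict positivity of $f(\cdot;a,b)$ on compact subsets of $(0,\infty)$.

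For the right-tail inequality \cref{eq:15}, I would retain only $-bs/(2a^2)$ in the exponent (discarding the non-positive $-b/(2s)$), yielding $f(s;a,b)\leq C s^{-3/2}\exp(-bs/(2a^2))$, and then estimate $\int_x^\infty s^{-3/2}\exp(-bs/(2a^2))\,ds \lesssim \exp(-bx/(2a^2))$ by a standard tail bound on exponential integrals. This gives $a_1=b/(2a^2)$, $b_1=1$. For the left-tail inequality \cref{eq:16}, the roles reverse: retain $-b/(2s)$, bound $f(s;a,b)\leq C s^{-3/2}\exp(-b/(2s))$, and substitute $u=1/s$ to convert $\int_0^{1/x} s^{-3/2}\exp(-b/(2s))\,ds$ into $\int_x^\infty u^{-1/2}\exp(-bu/2)\,du \lesssim x^{-1/2}\exp(-bx/2) \leq \exp(-bx/4)$ for $x\geq 1$. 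The looser constant $a_2=b/4$ is precisely what absorbs the polynomial prefactor $x^{-1/2}$.

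For the small-ball inequality \cref{eq:17}, I would lower-bound the probability by (length of interval) $\times$ (infimum of $f$ on the interval). The length is $t/x$, and a direct computation at the left endpoint gives
\begin{equation*}
f(x^{-1};a,b)\;=\;\sqrt{\frac{bx^3}{2\pi}}\,\exp\!\left(-\frac{bx}{2}+\frac{b}{a}-\frac{b}{2a^2 x}\right)\;\gtrsim\; x^{3/2}\exp\!\left(-\frac{bx}{2}\right),\qquad x\geq 1.
\end{equation*}
A short unimodality check --- differentiating $s\mapsto -\tfrac{3}{2}\log s - b(s-a)^2/(2a^2 s)$ shows that its unique critical point is a positive constant independent of $x$ --- implies that $f(\cdot;a,b)$ is strictly increasing on $[x^{-1},2x^{-1}]\supset[x^{-1},x^{-1}(1+t)]$ for $x$ large, so the infimum on $[x^{-1},x^{-1}(1+t)]$ is attained at $x^{-1}$ and is bounded below uniformly in $t\in(0,1)$. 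Multiplying by the interval length yields $G_\sigma(x^{-1}\leq\sigma\leq x^{-1}(1+t))\gtrsim t\,x^{1/2}\exp(-bx/2)$, which is of the announced form with $a_3=b/2$, $b_4=1$ and the polynomial factor in $x$ absorbed into $x^{b_3}$.

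The only genuine difficulty lies in the small-ball bound: one must check that the density estimate is uniform in both $t\in(0,1)$ and $x\geq 1$ by exploiting the monotonicity of $f$ to the left of its mode, and one must keep the various constants in the exponent consistent so as to arrive exactly at $a_3=b/2$ (and not some strictly larger value). The tail bounds \cref{eq:15,eq:16} are otherwise routine Laplace estimates once the exponent has been decomposed additively.
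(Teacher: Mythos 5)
Your proposal is correct in substance and follows essentially the same route as the paper: additively decompose the exponent as $-bs/(2a^2)+b/a-b/(2s)$, keep the relevant half of it in each tail regime, and lower-bound the small-ball probability by the interval length times a pointwise lower bound on the density near $s=x^{-1}$. Your variations are minor and harmless: for \cref{eq:16} you substitute $u=1/s$ (which in fact yields the stronger exponent $b/2$, so your remark that $a_2=b/4$ is "needed to absorb $x^{-1/2}$" is not quite right --- in the paper the weakening to $b/4$ absorbs the prefactor $t^{-3/2}$ inside the integral, while in your route nothing needs absorbing since $x^{-1/2}\leq 1$); and for \cref{eq:17} your monotonicity check is more than the paper uses, which simply bounds each factor of the integrand from below on the interval.

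The one step that does not hold as written is the last sentence of your treatment of \cref{eq:17}: having proved $G_\sigma(x^{-1}\leq\sigma\leq x^{-1}(1+t))\gtrsim t\,x^{1/2}e^{-bx/2}$, you cannot "absorb the polynomial factor into $x^{b_3}$" with $b_3=1$, because for a \emph{lower} bound the absorption goes the wrong way ($x^{1/2}\leq x$ for $x\geq 1$); in fact, for $t$ small the probability is of exact order $t\,x^{1/2}e^{-bx/2}$, so \cref{eq:17} with $a_3=b/2$, $b_4=1$ cannot hold with $b_3=1$. This is a slip in the stated constant of \cref{pro:11} rather than a defect specific to your argument --- the paper's own proof, which discards the factor $s^{-3/2}$ entirely, only delivers $t\,x^{-1}e^{-bx/2}$ --- and it is immaterial downstream, since in the Kullback--Leibler computations only the exponential rate $a_3$ and the power $b_4$ of $t$ matter, the polynomial factor in $x$ being swallowed by the $\log\sigma^{-1}$ terms. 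Your bound actually establishes \cref{eq:17} with the (correct, and stronger than the paper's) value $b_3=1/2$.
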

\begin{proof}
  It suffices to write, for any $x\geq 1$
  \begin{align*}
    G_{\sigma}\left(\sigma > x\right)
    &\leq \left(\frac{b}{2\pi x^3}\right)^{1/2} \int_x^{\infty}
      \exp\left(-\frac{b(t - a)^2}{2a^2t}\right)\,dt\\
    &\leq \left(\frac{b}{2\pi}\right)^{1/2} \exp\left( \frac{b}{a}
      -  \frac{b}{2}\right) \int_x^{\infty}\exp\left(-\frac{b
      t}{2a^2}\right)\, dt.
  \end{align*}
  Also, for any $x\geq 1$
  \begin{align*}
    G_{\sigma}\left(\sigma \leq 1/x\right)
    &\leq \left(\frac{b}{2\pi}\right)^{1/2} \int_0^{1/x}
      t^{-3/2}\exp\left(-\frac{b(t - a)^2}{2a^2t}\right)\,dt\\
    &\leq \left(\frac{b}{2\pi}\right)^{1/2} e^{b/a} \int_0^{1/x}
      t^{-3/2}e^{-b/(2t)}\, dt\\
    &\leq 216(b \sqrt e)^{-3}\left(\frac{b}{2\pi}\right)^{1/2}
      e^{b/a} \int_0^{1/x} e^{-b/(4t)}\, dt.
  \end{align*}
  Finally, for any $x \geq 1$ and $0 < t < 1$,
  \begin{align*}
    G_{\sigma}\left(x^{-1} \leq \sigma \leq x^{-1}( 1 +t) \right)
    &\geq \left(\frac{b}{2\pi}\right)^{1/2} e^{b/a-b/a^2}
      \int_{x^{-1}}^{x^{-1}(1 + t)}e^{-b/(2t)}\,dt. \qedhere
  \end{align*}
\end{proof}

\subsubsection{Location-scale mixtures of Gaussians}
\label{sec:locat-scale-mixt}

\par A symmetric Gamma process location-scale mixture of Gaussians prior $\Pi$
is the distribution of the random function
$f(x) := \int \varphi((x - \mu)/\sigma)\, dM(\sigma, \mu)$ where
$M \sim \Pi_{\alpha}$, with $\alpha$ a finite positive measure on
$(0,\infty) \times \mathbb R$ and $\varphi(x) := e^{-x^2/2}$ for all
$x\in \mathbb R$. We focus the attention of the reader on the fact that
althought we use the same notations (\textit{i.e.} $\Pi$, $\alpha$) as the
previous section, these are different distributions and in the sequel we pay
attention as making the context clear enough to avoid confusions.

\par We restrict our discussion to priors for which
$\alpha := \overline{\alpha}G_{\sigma} \times G_{\mu}$, with
$\overline{\alpha} > 0$ and $G_{\sigma}$, $G_{\mu}$ satisfying the same
assumptions as in \cref{sec:locat-mixt-gauss}.

\subsubsection{Hybrid location-scale mixtures of Gaussians}
\label{sec:hybr-locat-scale}

\par The proof of the results given in the two preceeding sections suggests that
neither location or location-scale mixtures can achieve the optimal rates,
whatever the nature of the tails of $Q_0$. We show that we can get better upper
bounds by introducing hybrid mixtures.

\par By a hybrid location-scale mixtures of Gaussians, we mean the distribution
$\Pi$ of the random function
$f(x) := \int \varphi((x - \mu)/\sigma) \, dM(\sigma,\mu)$, where
$M \sim \Pi_{\alpha}$, with
$\alpha = \overline{\alpha}P_{\sigma}\times G_{\mu}$, $\overline{\alpha} > 0$,
$P_{\sigma} \sim \Pi_{\sigma}$ and $G_{\mu}$ a probability measure satisfying
\cref{eq:18}. Here $\Pi_{\sigma}$ is a prior distribution on the space of
probability measures (endowed with Borel $\sigma$-algebra). We now formulate
conditions on $\Pi_{\sigma}$ that are the random analoguous to
\cref{eq:15,eq:16}. For the same constants $a_1,a_2,b_1,b_2$ as in
\cref{sec:locat-mixt-gauss}, we consider the existence of positive constants
$a_4,a_5$ such that $\Pi_{\sigma}$ satisfies for $x>0$ large enough
\begin{gather}
  \label{eq:4}
  \Pi_{\sigma}\left( P_{\sigma} : P_{\sigma}(\sigma > x)
    \geq \exp(-a_1x^{b_1} /2) \right) \lesssim \exp(-a_4 x^{b_1}),\\
  \label{eq:5}
  \Pi_{\sigma}\left( P_{\sigma} : P_{\sigma}(\sigma < 1/x)
    \geq \exp(-a_2x^{b_2} /2) \right) \lesssim \exp(-a_5 x^{b_2}).
\end{gather}
As a replacement of \cref{eq:17}, we assume that for all $r \geq 1$ there are
constants $a_6,b_7$ such that for any positive integer $J$ large enough
\begin{equation}
  \label{eq:6}
  \Pi_{\sigma}\left(\cap_{j=0}^{J}\Set{P_{\sigma} \given
      P_{\sigma}[2^{-j}, 2^{-j}(1+2^{-Jr})] \geq 2^{-J}}\right)
  \gtrsim \exp(-a_6J^{b_7}2^J).
\end{equation}

\Cref{eq:4,eq:5,eq:6} are rather restrictive and it is not clear \textit{a
  priori} whether or not such distribution exists. For example, if $P_{\sigma}$
is chosen to be almost-surely an Inverse-Gaussian distribution with parameters
$b,\mu$ then \cref{eq:6} is not satisfied. However, we now show that under
conditions on the base measure, $\Pi_{\sigma}$ can be chosen as a Dirichlet
Process, hereafter referred to as DP.

We recall that if $\Pi_{\sigma}$ is a Dirichlet Process distribution with base
measure $\alpha_{\sigma}G(\cdot)$ on $(0,\infty)$ \citep{Ferguson1973}, then
$P_{\sigma} \sim \Pi_{\sigma}$ is a random probability measure on $(0,\infty)$
such that for any Borel measurable partition $A_1,\dots,A_k$ of $(0,\infty)$,
the joint distribution $P_{\sigma}(A_1),\dots,P_{\sigma}(A_k)$ is the
$k$-variate Dirichlet distribution with parameters
$\alpha_{\sigma}G(A_1),\dots,\alpha_{\sigma}G(A_k)$.

\begin{proposition}
  \label{pro:14}
  Let $\alpha_{\sigma}> 0$, $G_{\sigma}$ a probability measure on $(0,\infty)$
  satisfying the same assumptions as in \cref{eq:15,eq:16,eq:17}, and
  $\Pi_{\sigma}$ be a Dirichlet Process with base measure
  $\alpha_{\sigma}G_{\sigma}(\cdot)$. Then $\Pi_{\sigma}$ satisfies
  \cref{eq:4,eq:5,eq:6} with constants $a_4 = a_1$, $a_5 = a_2$, a
  constant $a_6 > 0$ eventually depending on $r$, and $b_7 = 0$.
\end{proposition}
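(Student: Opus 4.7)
The plan is to exploit two well-known structural properties of the Dirichlet process $P_{\sigma} \sim \Pi_{\sigma}$: for any measurable $A$, the marginal $P_{\sigma}(A)$ is $\mathrm{Beta}(\alpha_{\sigma}G_{\sigma}(A),\,\alpha_{\sigma}(1-G_{\sigma}(A)))$ with mean $G_{\sigma}(A)$; and for any finite measurable partition $\{A_0,\dots,A_k\}$, the vector $(P_{\sigma}(A_0),\dots,P_{\sigma}(A_k))$ is Dirichlet with parameters $(\alpha_{\sigma}G_{\sigma}(A_0),\dots,\alpha_{\sigma}G_{\sigma}(A_k))$. The first property handles \eqref{eq:4} and \eqref{eq:5} by a one-line Markov bound; the second is used to lower bound the Dirichlet small-ball probability appearing in \eqref{eq:6}.

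For \eqref{eq:4}, Markov's inequality gives
\begin{equation*}
\Pi_{\sigma}\bigl(P_{\sigma}(\sigma > x) \geq e^{-a_1 x^{b_1}/2}\bigr) \leq G_{\sigma}(\sigma > x)\,e^{a_1 x^{b_1}/2},
\end{equation*}
and \eqref{eq:15} yields the stated tail. The same argument applied with \eqref{eq:16} to the complementary tail delivers \eqref{eq:5}.

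The delicate point is \eqref{eq:6}. I consider the partition of $(0,\infty)$ into the intervals $I_j := [2^{-j},\, 2^{-j}(1+2^{-Jr})]$ for $j=0,\dots,J$, together with the complement $I_*$. By the second property, $(P_{\sigma}(I_0),\dots,P_{\sigma}(I_J),P_{\sigma}(I_*))$ is Dirichlet with parameters $\gamma_j := \alpha_{\sigma}G_{\sigma}(I_j)$ and $\gamma_* := \alpha_{\sigma}-\sum_j \gamma_j$. I then lower bound the target probability by integrating the Dirichlet density over the box $\{p_j \in [2^{-J},\,2\cdot 2^{-J}],\ j=0,\dots,J\}$, which lies in the simplex since $(J+1)2^{-J} < 1$ for $J$ large. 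Using $\Gamma(\gamma_j) \sim 1/\gamma_j$ for the small $\gamma_j$, the density at the lower corner $p_j = 2^{-J}$ times the box volume $2^{-J(J+1)}$ reduces, up to bounded and polynomial-in-$J$ factors, to a constant multiple of $\bigl(\prod_{j=0}^{J} \gamma_j\bigr)\,2^{-J\sum_j \gamma_j}$.

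The main obstacle is to show that the logarithm of this quantity is of order $-2^J$. Applying \eqref{eq:17} with $x = 2^j$ and $t = 2^{-Jr}$ yields $\gamma_j \gtrsim 2^{jb_3 - Jrb_4}\exp(-a_3 2^j)$, and summing over $j$,
\begin{equation*}
\sum_{j=0}^{J}\log\gamma_j \;\gtrsim\; b_3\log 2\cdot\tfrac{J(J+1)}{2} - (J+1)Jrb_4\log 2 - a_3\sum_{j=0}^J 2^j \;\gtrsim\; -C(r)\,2^J,
\end{equation*}
since $J^2 r = o(2^J)$. On the other hand, $\sum_j \gamma_j \leq \alpha_{\sigma}$, so the factor $2^{-J\sum_j\gamma_j}$ contributes only $O(J)$ in log-scale, which is negligible compared to $2^J$. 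Combining these estimates gives the desired lower bound $\exp(-a_6 2^J)$ with $a_6$ depending on $r$ through the $J^2 r$ correction (absorbed into $2^J$ for $J$ large) and $b_7 = 0$, as claimed.
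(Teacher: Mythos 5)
Your proof is correct and follows essentially the same route as the paper: Markov's inequality on the Beta marginals for \cref{eq:4,eq:5}, and for \cref{eq:6} a finite-dimensional Dirichlet small-ball lower bound over the partition $\{I_j\}$ combined with \cref{eq:17} to get $\sum_{j\le J}\log\gamma_j\gtrsim -2^J$, which is exactly what the paper does via \citet[lemma~6.1]{Ghosal2000}, the only cosmetic difference being that the paper splits the complement cell into pieces of base measure in $[e^{-2^J},1]$ instead of bounding the $(1-\sum_j p_j)^{\gamma_*-1}$ and $1/\Gamma(\gamma_*)$ factors directly as you implicitly do. Two small imprecisions worth noting but harmless: the corner-times-volume approximation is accurate only up to $2^{O(J)}$ (not polynomial-in-$J$) factors, and one should observe (e.g.\ by applying \cref{eq:17} at a non-dyadic point) that $\gamma_*$ stays bounded away from $0$ so that $1/\Gamma(\gamma_*)$ is harmless; in both cases the logarithmic contribution is $o(2^J)$, so the stated bound with $b_7=0$ stands.
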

\begin{proof}
  \par We first prove \cref{eq:4}. It follows from the definition of the DP that
  $P_{\sigma}(x,\infty)$ has Beta distribution with parameters
  $\alpha_{\sigma}G_{\sigma}(x,\infty)$ and
  $\alpha_{\sigma}(1-G_{\sigma}(x,\infty))$, then by Markov's inequality
  \begin{align*}
    \Pi_{\sigma}\Big( P_{\sigma} : P_{\sigma}(x,\infty) \geq t\Big)
    \leq \frac{G_{\sigma}(x,\infty)}{t}.
  \end{align*}
  Likewise, if $t= \exp(-a_1x^{b_1}/2)$ and $G_{\sigma}$ satisfies
  \cref{eq:15,eq:16,eq:17}, the conclusion follows. The same steps with
  $G_{\sigma}(0,1/x)$ give the proof of \cref{eq:5}. It remains to prove
  \cref{eq:6}. Let $r\geq 1$ and define
  $V_{j,r} := \Set{\sigma \given 2^{-j} \leq \sigma \leq 2^{-j}(1 + 2^{-Jr}) }$
  for any integer $0 \leq j \leq J$. For all $r\geq 1$ the $V_{j,r}$'s are
  disjoint. Set $V_r^c := \cup_{j=0}^JV_{j,r}^c$. If
  $\alpha_{\sigma}G_{\sigma}(V_r^c) \leq 1$ let $V_{J+1,r} = V_r^c$ and $M=1$ ;
  otherwise split $V_r^c$ into $M>1$ disjoint subsets
  $V_{1,r}^c,\dots V_{M,r}^c$ such that
  $\exp(-2^J) \leq \alpha_{\sigma}G_{\sigma}(V_{k,r}^c) \leq 1$ for all
  $k=1,\dots,M$ and set $V_{J+1,r} = V_{1,r}^c$, $V_{J+2,r}=V_{2,r}^c$,
  $\dots, V_{J+M,r} = V_{M,r}^c$ (since $G_{\sigma}(0,\infty) = 1$ this can be
  done with a number $M$ independent of $J$). For $J$ large enough (so that
  $(J+M)2^{-J+1} < 1$), acting as in \citet[lemma~6.1]{Ghosal2000}, it follows
  \begin{equation*}
     \Pi_{\sigma}\Big(P_{\sigma} : P_{\sigma}[2^{-j},2^{-j}(1+2^{-Jr}] \geq
    2^{-J} \quad \forall\ 0 \leq j \leq J\Big)
    \geq \frac{\Gamma(\alpha_{\sigma}) 2^{-J(J+M)}}
    {\prod_{j=0}^{J + M}\Gamma(\alpha_{\sigma}G_{\sigma}(V_{j,r}))},
  \end{equation*}
  Also, $\alpha_{\sigma}G_{\sigma}(V_{j,r}) \leq 1$ implies
  $\Gamma(\alpha_{\sigma}G_{\sigma}(V_{j,r})) \leq
  1/(\alpha_{\sigma}G_{\sigma}(V_{j,r}))$, hence
  \begin{multline*}
    \Pi_{\sigma}\Big(  P_{\sigma}\ :\ P_{\sigma}[2^{-j},2^{-j}(1+2^{-Jr}] \geq
    2^{-J} \quad \forall\ 0 \leq j \leq J\Big)\\
    \geq \Gamma(\alpha_{\sigma}) \alpha_{\sigma}^{J+M+1} 2^{-J(J+M)}
    \prod_{j=0}^{J+M}G_{\sigma}(V_{j,r}).
  \end{multline*}
  Since $M$ does not depend on $J$, one can find a constant $C >0$ such that
    \begin{multline*}
      \Pi_{\sigma}\Big(P_{\sigma} : P_{\sigma}[2^{-j},2^{-j}(1+2^{-Jr}] \geq
      2^{-J} \quad \forall\ 0 \leq j \leq J\Big)\\ \geq
      \Gamma(\alpha_{\sigma})\exp\left\{ -C J^2 + \sum_{j=0}^J\log
        G_{\sigma}(V_{j,r}) + \sum_{j=J+1}^{J+M}\log G_{\sigma}(V_{j,r})
      \right\}.
  \end{multline*}
  By construction, the second sum in the rhs of the last equation is lower
  bounded by $-M 2^J$, whereas if $G_{\sigma}$ satisfies
  \cref{eq:15,eq:16,eq:17}, the first sum is lower bounded by $-C'2^J$ for a
  constant $C'>0$ eventually depending on $r$. Then the proposition is
  proved.
\end{proof}

\subsection{Posterior concentration rates under the mixture priors}
\label{sec:results}

We let $\Pi(\cdot \mid \mathbf y^n , \mathbf x^n)$ denote the posterior
distribution of $f\sim \Pi$ based on $n$ observations
$(X_1,Y_1),\dots,(X_n,Y_n)$ modelled as in \cref{sec:introduction}. Let
$(\epsilon_n)_{n\geq 1}$ be a sequence of positive numbers with
$\lim_n\epsilon_n = 0$, and $d_n$ denote the empirical $L^2$ distance, that is
$nd_n(f,g)^2 = \sum_{i=1}^n|f(X_i) - g(X_i)|^2$.

The following theorem is proved in Section \cref{sec:bound-post-distr} .

\begin{theorem}
  \label{thm:bigthm}
  Consider the model \eqref{eq:model}, and assume that
  $f_0 \in L^1 \cap \mathtt C^{\beta}$ and $Q_0|X|^p < +\infty$. Then there
  exist a constant $C > 0$ and $t > 0$ depending only on $f_0$ and $Q_0$ such
  that
  \begin{itemize}
    \item If the prior $\Pi$ is the symmetric Gamma location mixture of
    Gaussians as defined in \cref{sec:locat-mixt-gauss}
    \begin{equation*}
      \Pi\left( d_n( f , f_0)^2 > Cn^{-2\beta/(3\beta + 1)}(\log n)^t \mid
        \mathbf y^n , \mathbf x^n\right) = o_p(1)
    \end{equation*}
    when $0 < p \leq 2$, and
    \begin{equation*}
      \Pi\left( d_n( f , f_0)^2 > Cn^{-2\beta/(2\beta + 1 + 2\beta/p)}(\log n)^t
        \mid \mathbf y^n , \mathbf x^n\right) = o_p(1)
    \end{equation*}
    when $p > 2$.
    \item If the prior $\Pi$ is the symmetric Gamma location-scale mixture of
    Gaussians defined in \cref{sec:locat-scale-mixt}
    \begin{equation*}
      \Pi\left( d_n( f , f_0)^2 > C[n^{-2\beta/(3\beta +2)}\wedge
        n^{-2\beta/(2\beta + 1 + 2\beta/p)}](\log n)^t\mid \mathbf y^n , \mathbf
        x^n\right) =  o_p(1)
    \end{equation*}
    when $0 < p \leq 2$, and
    \begin{equation*}
      \Pi\left( d_n( f , f_0)^2 > Cn^{-\beta/(\beta + 1)}(\log n)^t \mid \mathbf
        y^n, \mathbf x^n\right) = o_p(1),
    \end{equation*}
    when $p > 2\beta$.
    \item If the prior $\Pi$ is the hybrid symmetric Gamma location-scale
    mixture of Gaussians defined in \cref{sec:hybr-locat-scale}
    \begin{equation*}
      \Pi\left( d_n( f , f_0)^2 > C[n^{-2\beta/(3\beta + 1)}\wedge
        n^{-p/(p+1)}](\log n)^{t} \mid \mathbf y^n , \mathbf x^n\right) =
      o_p(1),
    \end{equation*}
    when $p \leq 2\beta$ or
    \begin{equation*}
      \Pi\left( d_n( f , f_0)^2 > Cn^{-2\beta/(2\beta + 1)}(\log n)^{t}\mid
        \mathbf y^n , \mathbf x^n\right) = o_p(1),
    \end{equation*}
    when $p > 2\beta$ .
  \end{itemize}
\end{theorem}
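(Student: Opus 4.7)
I would apply the general posterior concentration scheme of \citet{Ghosal2000,Ghosal2007}, specialized to regression with random design along the lines of \citet{dejonge:vzanten:09,NauletBarat2015}. For each of the three priors one has to verify: (i) a lower bound $\Pi(B_{KL}(f_0,\epsilon_n)) \geq e^{-c\, n\epsilon_n^2}$ on the prior mass of a Kullback--Leibler-type neighborhood of $f_0$; (ii) the existence of a sieve $\mathcal F_n$ with metric entropy $\log N(\epsilon_n,\mathcal F_n,\|\cdot\|_\infty) \lesssim n\epsilon_n^2$; and (iii) an exponentially small prior mass $\Pi(\mathcal F_n^c) \lesssim e^{-(c+4)n\epsilon_n^2}$. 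Once these are in place, the Gaussian likelihood supplies the usual tests and the empirical distance $d_n$ concentrates on $\|\cdot\|_{L^2(Q_0)}$ uniformly on the sieve, so that \eqref{eq:rate} follows with the announced $\epsilon_n$.

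\textbf{Approximation and prior mass.} The core step constructs a target mixture $f^* = \sum_{j=1}^{N} w_j \varphi_{\sigma_j}(\cdot-\mu_j)$ from $f_0 \in \mathtt C^\beta$. Standard convolution estimates give $\|f_0 - f_0*\varphi_\sigma\|_\infty \lesssim \sigma^\beta$, while $f_0 \in L^1$ allows a discretization of $f_0*\varphi_\sigma$ into $N \asymp (A_n/\sigma)\log(1/\sigma)$ atoms on $[-A_n,A_n]$ with controlled sup-error; the contribution of $|x|>A_n$ to $\|f^*-f_0\|_{L^2(Q_0)}^2$ is handled purely through the moment assumption, via $Q_0(|X|>A_n) \leq Q_0|X|^p / A_n^p$. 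A lower bound on $\Pi(\|f-f^*\|_\infty \leq \epsilon_n)$ then comes from small-ball estimates for the $\mathrm{SGa}$ increments on the cells carrying the $\mu_j$'s, from \eqref{eq:18} for the $G_\mu$-mass around each $\mu_j$, and from \eqref{eq:17} for the $G_\sigma$-mass around the target bandwidth. For the location-scale prior one allows each $\sigma_j$ to grow with $|\mu_j|$, taking fine scales in the bulk and coarse scales in the tails; for the hybrid prior one integrates additionally over $\Pi_\sigma$ and uses \eqref{eq:6} (verified for the Dirichlet process in \cref{pro:14}) to obtain, simultaneously at all dyadic scales, an atom of $P_\sigma$ of mass at least $2^{-J}$ inside $[2^{-j},2^{-j}(1+2^{-Jr})]$.

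\textbf{Sieves and optimization.} The sieve $\mathcal F_n$ consists of mixtures whose total variation and number of significant atoms are bounded by $N_n$, with locations in $[-A_n,A_n]$ and scales in $[\underline\sigma_n,\overline\sigma_n]$ (with $P_\sigma$ also supported there in the hybrid case). Its complement has exponentially small prior mass thanks to \eqref{eq:15},\eqref{eq:16} (resp.\ \eqref{eq:4},\eqref{eq:5}), and its entropy is of order $N_n\log(A_n/(\underline\sigma_n\epsilon_n))$. The rates announced in \cref{thm:bigthm} then follow by equating $\sigma^{2\beta}$ (bulk approximation), $A_n^{-p}$ (design-tail error) and $n^{-1}N_n\log n$ (complexity): for location, $N_n\asymp A_n/\sigma$ and the trade-off saturates at $n^{-2\beta/(3\beta+1)}$ in the heavy-tailed regime and reaches $n^{-2\beta/(2\beta+1+2\beta/p)}$ in the lighter-tailed one; for location-scale the forced dispersion of $G_\sigma$ across scales inflates $N_n$; for the hybrid, large tail scales can be supplied by a few atoms of $P_\sigma$, disconnecting the tail cost from the bulk resolution and yielding the $n^{-p/(p+1)}$ and $n^{-2\beta/(2\beta+1)}$ branches. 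The main obstacle is precisely this last coupling: in the location-scale prior, resolving $f_0$ at a fine scale in the bulk forces the single deterministic $G_\sigma$ to put mass there, which wastes components in the tails where coarse scales would suffice. The hybrid construction breaks the coupling via the random $P_\sigma$, which is why \eqref{eq:6}, and the delicate simultaneous small-ball estimate underlying \cref{pro:14}, are the technical crux of the proof.
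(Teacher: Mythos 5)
Your overall scaffolding (KL mass lower bound, sieve, entropy, tests, then optimizing $\sigma$, $A_n$, $N_n$) matches the paper's, but there is a genuine gap at the very first step of your approximation argument. You claim that ``standard convolution estimates give $\|f_0 - f_0*\varphi_\sigma\|_\infty \lesssim \sigma^\beta$''; this is false as soon as $\beta > 2$, since the Gaussian kernel has a nonvanishing second moment and the convolution error stalls at order $\sigma^2$ (the theorem is claimed for arbitrary $\beta>0$). Repairing this is precisely the content of the paper's approximation machinery: one first smooths with a superkernel $\chi$ (with $\widehat{\chi}\equiv 1$ near the origin, so all moments beyond the zeroth vanish and \cref{lem:4} gives order $\sigma^\beta$ for every $\beta$), and then rewrites the band-limited function $\chi_\sigma * f_0$ \emph{exactly up to an $e^{-4\pi^2/h^2}$ error} as a discrete Gaussian mixture via the dual function $\eta$ with $\widehat{\eta}=\widehat{\chi}/\widehat{\varphi}$ and Poisson summation (\cref{lem:1}). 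Without this (or an equivalent higher-order correction scheme à la Kruijer--Rousseau--van der Vaart), your plan proves the stated rates only for $\beta\leq 2$. Relatedly, your saturation of the location rate at $n^{-2\beta/(3\beta+1)}$ for small $p$ needs the bound $|\Lambda|\lesssim \sigma^{-(\beta+1)}$, which comes from thresholding the coefficients at $\sigma^\beta$ together with $\sum_k|u_k|\lesssim \|f_0\|_1\sigma^{-1}$ (\cref{pro:2,pro:13}); nothing in your construction supplies this cap, and without it the component count blows up as $p\to 0$.

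A second, more conceptual discrepancy concerns \emph{why} location-scale loses and the hybrid wins. In the paper the sieve and entropy bounds are essentially identical for all three priors (\cref{lem:3,lem:6}); the damage is done in the KL lower bound, where each of the $|\Lambda|$ components of the location-scale approximant must independently draw its scale in a window $U_j$ of $G_\sigma$-mass $\asymp e^{-a_3/\sigma_j}$, producing the term $|\Lambda|\sigma_J^{-1}$ in the exponent (for the location mixture the single $\sigma$ is drawn once, costing only $e^{-a_3/\sigma}$). Your explanation via an inflated sieve complexity $N_n$ and ``wasted components in the tails'' points at the wrong quantity. The hybrid does not work by letting ``a few atoms of $P_\sigma$ supply the tail scales''; it works by conditioning on the event $\Omega_J$ that the random $P_\sigma$ puts mass at least $2^{-J}$ on \emph{every} dyadic scale window simultaneously, whose prior cost $e^{-a_6 J^{b_7}2^J}$ (\cref{eq:6}, \cref{pro:14}) is paid once, after which each atom's scale contributes only a polynomial factor $2^{-J}$ instead of an exponential one. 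Your plan does invoke \cref{eq:6}, so the ingredient is present, but the trade-off you describe would not by itself produce the $|\Lambda|\sigma_J^{-1}$ versus $J|\Lambda|$ dichotomy that actually separates the location-scale and hybrid rates in \cref{thm:bigthm}.
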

The upper bounds on the rates in the previous paragraph are no longer valid when
$p=0$. Indeed the constant $C > 0$ depends on $p$ and might not be definite if
$p=0$ ; the reason is to be found in the fact that $C$ heavily depends on the
ability of the prior to draw mixture component in regions of observed data,
which remains concentrated near the origin when $p > 0$. In
\cref{sec:relax-tail-assumpt}, we overcome this issue by making the prior
covariate dependent ; this allows to derive rates under the assumption $p=0$ (no
tail assumption).

\subsection{Relaxing the tail assumption : covariate dependent prior for
  location mixtures}
\label{sec:relax-tail-assumpt}

Although the rates derived in \cref{sec:bound-post-distr} do not depend on
$p > 0$ when $p$ is small, the assumption $Q_0|X|^p < +\infty$ is crucial in
proving the Kullback-Leibler condition. Indeed, this condition ensures that the
covariates belong to a set $\mathcal X_n$ which is not too large, which allows
us to bound from below the prior mass of Kullback-Leibler neighbourhoods of the
true distribution. Surprisingly, it seems very difficult to get rid of this
assumption under a fully Bayesian framework without fancy assumptions, while
making the prior covariates dependent allows to drop all tail conditions on
$Q_0$. Doing so, we can adapt to the tail behaviour of $Q_0$, as shown in the
following theorem, which is an adaptation of the general theorems of
\citet{GhosalVanDerVaartothers2007}. For convenience, in the sequel we drop out
the superscript $n$ and we write $\mathbf x$, $\mathbf y$ for $\mathbf x^n$,
$\mathbf y^n$, respectively. For $\epsilon > 0$ and anu subset $A$ of a metric
space equipped with metric $d$, we let $N(\epsilon,A,d)$ denote the
$\epsilon$-covering number of $A$, \textit{i.e.} $N(\epsilon,A,d)$ is the
smallest number of balls of radius $\epsilon$ needed to cover $A$.

\begin{theorem}
  \label{thm:1}
  Let $\Pi_{\mathbf x}$ be a prior distribution that depends on the covariate
  vector $\mathbf x$, $0 < c_2 < 1/4$ and $\epsilon_n \rightarrow 0$ with
  $n\epsilon_n^2 \rightarrow \infty$. Suppose that
  $\mathcal{F}_n \subseteq \mathcal{F}$ is such that
  $Q_0^n\Pi_{\mathbf x}(\mathcal{F}_n^c) \lesssim \exp(-\frac
  12(1+2c_2)n\epsilon_n^2)$ and
  $\log N(\epsilon_n/18, \mathcal{F}_n, d_n) \leq n\epsilon_n^2/4$ for $n$ large
  enough. If for any $\mathbf x \in \mathbb R^n$ it holds
  $\Pi_{\mathbf x}(f\ : \ d_n(f,f_0) \leq s \epsilon_n)\gtrsim
  \exp(-c_2n\epsilon_n^2)$, then for all $M > 0$ we have
  $\Pi_{\mathbf x}(f\ :\ d_n(f,f_0) > M \epsilon_n \mid \mathbf y, \mathbf x) =
  o_p(1)$.
\end{theorem}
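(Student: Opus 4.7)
The plan is to follow the testing / prior-mass paradigm of \citet{Ghosal2000, GhosalVanDerVaartothers2007}, adapted to the fixed-design Gaussian regression likelihood with a covariate-dependent prior. Because the entropy bound and the prior-mass hypothesis are stated pointwise in $\mathbf x$, the whole analysis can be carried out conditionally on the design; $Q_0^n$-integration is deferred to the very last step, where it converts the sieve assumption $Q_0^n\Pi_{\mathbf x}(\mathcal F_n^c) \lesssim \exp(-\tfrac{1}{2}(1+2c_2)n\epsilon_n^2)$ into a usable deterministic bound.

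First I would construct tests. For any alternative $f_1$ with $\delta := d_n(f_0,f_1)$, the conditional Neyman--Pearson test in $Y_i \mid X_i \sim N(f(X_i), s^2)$ is a one-dimensional Gaussian test whose type-I and type-II errors are both bounded by $\exp(-n\delta^2/(8s^2))$. Covering $\mathcal F_n \cap \{d_n(f,f_0) > M\epsilon_n\}$ by $\epsilon_n/18$-balls in $d_n$, the entropy hypothesis $\log N(\epsilon_n/18, \mathcal F_n, d_n) \leq n\epsilon_n^2/4$ lets the maximum of these tests serve as a global test $\phi_n(\mathbf x, \mathbf y)$ satisfying
\begin{equation*}
P_{f_0}^n(\phi_n \mid \mathbf x) \lesssim e^{-K_1 M^2 n\epsilon_n^2}, \qquad \sup_{f \in \mathcal F_n,\, d_n(f,f_0) > M\epsilon_n} P_f^n(1-\phi_n \mid \mathbf x) \lesssim e^{-K_1 n d_n(f,f_0)^2}.
\end{equation*}
Next I would lower-bound the evidence $D_n := \int (p_f^n/p_{f_0}^n)(\mathbf y \mid \mathbf x)\,d\Pi_{\mathbf x}(f)$. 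On $B_n := \{f : d_n(f,f_0) \leq s\epsilon_n\}$ the Gaussian likelihood yields $\mathrm{KL}(P_{f_0}^n \mid \mathbf x, P_f^n \mid \mathbf x) = nd_n^2/(2s^2) \leq n\epsilon_n^2/2$ and matching log-likelihood-ratio variance; exploiting that $\log(p_f^n/p_{f_0}^n)$ is exactly Gaussian in $\mathbf y$ conditionally on $(\mathbf x, f)$, Jensen's inequality together with a second-moment computation over $B_n$ gives
\begin{equation*}
D_n \gtrsim \Pi_{\mathbf x}(B_n) \exp\!\bigl(-\tfrac{1}{2} n\epsilon_n^2 - o_{P_{f_0}^n}(n\epsilon_n^2)\bigr)
\end{equation*}
with conditional probability tending to one.

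Finally I would combine via the decomposition $\Pi_{\mathbf x}(d_n(f,f_0) > M\epsilon_n \mid \mathbf y, \mathbf x) \leq \phi_n + (1-\phi_n)\,\mathrm{Num}/D_n$, where $\mathrm{Num}$ denotes the likelihood-ratio integral over the complement of the $M\epsilon_n$-ball. Fubini and the test bounds give $P_{f_0}^n[(1-\phi_n)\mathrm{Num} \mid \mathbf x] \lesssim e^{-K_1 M^2 n\epsilon_n^2} + \Pi_{\mathbf x}(\mathcal F_n^c)$; $Q_0^n$-integration then turns the random summand into $\lesssim \exp(-(\tfrac{1}{2}+c_2)n\epsilon_n^2)$. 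Dividing by the lower bound on $D_n$ and choosing $M$ large enough that $K_1 M^2 > \tfrac{1}{2}+c_2$ yields the claim. The main obstacle is the very tight constant balance: the sieve decay $\exp(-(\tfrac{1}{2}+c_2)n\epsilon_n^2)$ and the evidence lower bound $\Pi_{\mathbf x}(B_n)\exp(-\tfrac{1}{2}n\epsilon_n^2) \gtrsim \exp(-(\tfrac{1}{2}+c_2)n\epsilon_n^2)$ share the same leading constant, so the argument cannot tolerate the crude $\exp(-2n\epsilon_n^2)$ form of the generic evidence lemma and requires the sharp Gaussian-specific version sketched above; the strict inequality $c_2 < 1/4$ is precisely what provides the slack needed to absorb the residual $o(n\epsilon_n^2)$ error coming from the second-moment step.
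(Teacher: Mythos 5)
Your architecture is the same as the paper's: work conditionally on $\mathbf x$ throughout, aggregate Gaussian tests over an $\epsilon_n/18$-net of $\mathcal{F}_n$ using the entropy hypothesis, lower bound the evidence through the prior mass of $\{f : d_n(f,f_0)\le s\epsilon_n\}$, and use Fubini so that the $Q_0^n$-integration converts the averaged sieve hypothesis into a usable bound. The paper does exactly this, quoting \citet{Birge2006}/\citet{ghosal2007bis} for the tests and Lemma~10 of \citet{GhosalVanDerVaartothers2007} for the evidence, so the differences in how you obtain the two ingredients (explicit Neyman--Pearson tests; a Jensen-plus-Gaussian-fluctuation evidence bound) are cosmetic.

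The genuine problem is your endgame for the $\mathcal{F}_n^c$ term. Your sharp bound gives $D_n \gtrsim \Pi_{\mathbf x}(B_n)\exp(-\tfrac12 n\epsilon_n^2 + W_n)$ with $W_n$ centered Gaussian (given $\mathbf x$) of standard deviation up to $\sqrt n\,\epsilon_n$, hence $D_n \gtrsim \exp(-(\tfrac12+c_2)n\epsilon_n^2 + W_n)$, while after $Q_0^n$-integration the sieve contribution is only $\lesssim \exp(-\tfrac12(1+2c_2)n\epsilon_n^2) = \exp(-(\tfrac12+c_2)n\epsilon_n^2)$: the two leading exponents coincide identically, so the quantity you need to be $o_p(1)$ is controlled only by $\exp(-W_n)=\exp(O_P(\sqrt n\,\epsilon_n))$, which is not $o_P(1)$. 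Neither of your proposed escapes works: $M$ enters only the test terms, and $c_2<1/4$ gives no slack here because the sieve exponent $\tfrac12(1+2c_2)$ tracks $c_2$ exactly as the evidence exponent $\tfrac12+c_2$ does — your closing claim that "$c_2<1/4$ absorbs the residual $o(n\epsilon_n^2)$" is therefore misplaced (indeed $c_2<1/4$ never appears in your bookkeeping, since you handle the tests by enlarging $M$, whereas the theorem is stated for every $M>0$). The paper resolves the bookkeeping by fixing the explicit event $E_n(\mathbf x)$ of \cref{eq:21}, on which the evidence exceeds $\exp(-\tfrac14(1+4c_2)n\epsilon_n^2)$; on that event the sieve and type-I terms are $\lesssim e^{-n\epsilon_n^2/4}$ and the type-II term is $\lesssim e^{-(\frac14-c_2)n\epsilon_n^2}$ — this is precisely where $c_2<1/4$ is used — and the entire difficulty is then pushed into showing $Q_0^nP_0^n(E_n(\mathbf x)^c)=o(1)$, which is delegated to Lemma~10 of \citet{GhosalVanDerVaartothers2007} applied conditionally on $\mathbf x$ (this is where the pointwise-in-$\mathbf x$ prior-mass condition is consumed). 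Your instinct that the constants are uncomfortably tight at this point is sound, but as written your argument does not yield $o_p(1)$ for the $\mathcal{F}_n^c$ term, so the proof does not close; you would need either the paper's intermediate-threshold device or genuinely more slack in the sieve or prior-mass exponents.
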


We apply \cref{thm:1} to symmetric Gamma process location mixtures of Gaussians
in the following way. Let $\mathbb Q_{\mathbf x}^n$ denote the empirical measure
of the covariate vector $\mathbf x$. Given a a probability density function $g$,
we let $G_{\mathbf x}$ the probability measure which density is
$z\mapsto \int g(z - x_i)\, d\mathbb Q_{\mathbf x}^n(x)$.

\begin{corollary}
  \label{cor:locmix:covar}
  Then we let $\Pi_{\mathbf x}$ be the distribution of the random function
  $f(x) := \int \varphi((x-\mu)/\sigma)\,dM(\mu)$, where
  $\sigma \sim G_{\sigma}$ and $M \sim \Pi_{\alpha}$ with
  $\alpha = \overline{\alpha} G_{\mathbf x}$ for some $\overline{\alpha} >
  0$. Assume that $G_{\sigma}$ satisfies \cref{eq:15,eq:16,eq:17} and that there
  exists a constant $b_8 > 0$ such that
  $\sup_{\mathbf x \in \mathbb R^n}G_{\mathbf x}(\mu \, :\, |\mu - s| \leq t)
  \lesssim t^{b_8}$ for all $0 < t,s \leq 1$. Then
  $\Pi_{\mathbf x}(f\ :\ d_n(f,f_0) > M \epsilon_n \mid \mathbf y, \mathbf x) =
  o_p(1)$ with
  $\epsilon_n^2 \lesssim n^{-2\beta/(3\beta+1)}(\log n)^{2-2\beta/(3\beta+1)}$.
\end{corollary}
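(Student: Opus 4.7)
The plan is to verify the three hypotheses of \cref{thm:1} at the rate $\epsilon_n^2 \asymp n^{-2\beta/(3\beta+1)}(\log n)^{2-2\beta/(3\beta+1)}$, essentially porting the location-mixture part of \cref{thm:bigthm} to the covariate-dependent setting. Throughout I would use bandwidth $\sigma_n \asymp \epsilon_n^{1/\beta}$ and exploit that $d_n$ only measures fit at the design points, so that only an approximation at the $X_i$'s is needed.

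For the pointwise prior-mass bound, I would fix $\mathbf x$ and build a finite Gaussian mixture $f_n = \sum_{j=1}^{N} w_j \varphi((\cdot-\mu_j)/\sigma_n)$ approximating $f_0$ at the design points. The standard construction from \citet{kruijer:rousseau:vdv:10,ShenTokdarGhosal2013} produces a continuous mixing measure whose support may be taken inside the convex hull of the $X_i$'s; discretising it on a grid of spacing $\sigma_n/\log n$ gives $N\asymp \sigma_n^{-1}\log n$ atoms, each of which can be shifted within distance $\sigma_n$ to lie next to some $X_i$ without spoiling the approximation. Because $g$ is a density, hence bounded below on a neighbourhood of $0$, $G_{\mathbf x}(|\mu-\mu_j|\leq t)\gtrsim t/n$ whenever $\mu_j$ is close to an $X_i$ and $t$ is small; combined with \cref{eq:17} and the small-ball probabilities of the symmetric Gamma process \citep{WolpertClydeTu2011,NauletBarat2015}, this yields $\Pi_{\mathbf x}(d_n(f,f_0)\leq \eta\epsilon_n)\gtrsim \exp(-c_2 n\epsilon_n^2)$ for small enough $\eta$, uniformly in $\mathbf x$.

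For the sieve, I would take $\mathcal F_n$ to be the set of mixtures with $\sigma\in[\underline\sigma_n,\overline\sigma_n]$ and total-variation norm $\|M\|_{\mathrm{TV}}\leq R_n$, each endpoint a polynomial in $\log n$. The extreme bandwidths are controlled by \cref{eq:15,eq:16}; the tail of $\|M\|_{\mathrm{TV}}$ under $\Pi_{\alpha}$ follows from the standard Laplace-transform bounds for sums of independent Gamma variables (see \citet{WolpertClydeTu2011,NauletBarat2015}); since none of these bounds depend on $\mathbf x$, they hold in $Q_0^n$-expectation as required. The entropy estimate $\log N(\epsilon_n/18,\mathcal F_n,d_n)\lesssim n\epsilon_n^2$ is the same as in the proof of the location-mixture part of \cref{thm:bigthm}: covering the parameter space of $(\sigma,M)$ on a grid of spacing $\sigma_n$ using the uniform upper bound $G_{\mathbf x}(|\mu-s|\leq t)\lesssim t^{b_8}$ keeps the number of cells polynomial in $\sigma_n^{-1}$ uniformly in $\mathbf x$.

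The delicate point is the prior-mass step. Since $G_{\mathbf x}$ places mass only of order $n^{-1}$ near any given design point, each of the $N$ atoms in the approximating mixture contributes a factor $n^{-1}$ to the lower bound, yielding a cost $\exp(-N\log n)\asymp\exp(-\sigma_n^{-1}(\log n)^2)$. This is exactly absorbed by the logarithmic factor in $\epsilon_n^2$, which is precisely why the rate matches the $0<p<2\beta/(\beta+1)$ row of the location-mixture table; and the argument remains uniform in $\mathbf x$ because the lower bound on $G_{\mathbf x}$ around a design point does not depend on the configuration of the $X_i$'s. Once the three hypotheses are verified, \cref{thm:1} delivers the stated posterior concentration rate.
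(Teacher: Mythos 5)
Your overall strategy is the paper's: apply \cref{thm:1}, reuse the sieve and entropy bounds of \cref{lem:3,lem:5} (which indeed do not involve the base measure, only $\overline{\alpha}$), and establish a prior-mass lower bound uniformly in $\mathbf x$ by exploiting that only the behaviour at the design points matters and that $G_{\mathbf x}$ puts mass $\gtrsim n^{-1}$ near each $X_i$. The gap is in the quantitative heart of the prior-mass step: your atom count and the final balancing are inconsistent with each other and with the claimed rate. You assert $N\asymp\sigma_n^{-1}\log n$ components, each costing a factor $n^{-1}$, hence a total cost $\exp(-\sigma_n^{-1}(\log n)^2)$, and you claim this is ``exactly absorbed'' to give $\epsilon_n^2\asymp n^{-2\beta/(3\beta+1)}$. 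But $\sigma_n^{-1}(\log n)^2\leq c_2 n\epsilon_n^2$ with $\epsilon_n\asymp\sigma_n^{\beta}$ (up to logs) solves to $\sigma_n^{-(2\beta+1)}\lesssim n$, i.e.\ it would yield the \emph{better} rate $n^{-2\beta/(2\beta+1)}$ — the hybrid-mixture rate, not the one in the statement. The exponent $3\beta+1$ can only arise from a component count of order $\sigma_n^{-(\beta+1)}$, which is what the paper's \cref{lem:8} uses: after thresholding the coefficients at $|u_k|>\sigma^{\beta}$, the bound $\sum_k|u_k|\lesssim\|f_0\|_1\sigma^{-1}$ of \cref{pro:2} forces $|\Lambda(\mathbf x)|\lesssim\sigma^{-(\beta+1)}$ (\cref{pro:13}), and each retained component costs $\exp(-C\log\sigma^{-1}-C\log n)$, giving $\sigma^{-(3\beta+1)}\lesssim n\log n$ and hence the stated rate.

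Your count $N\asymp\sigma_n^{-1}\log n$ is also not justified on its own terms. Since no moment assumption is made on $Q_0$ here, the convex hull of the $X_i$'s has uncontrolled diameter, so a grid of spacing $\sigma_n/\log n$ over that hull has an uncontrolled number of points; the only ways to bound the count are (i) to discard atoms whose weight is below $\sigma^{\beta}$, giving $\sigma^{-(\beta+1)}$, or (ii) to discard atoms farther than $\sigma\sqrt{\log\sigma^{-1}}$ from every design point (their contribution at the $X_i$'s is negligible by Gaussian decay), giving $O(n\log n)$; the paper takes the minimum. Relatedly, your device of ``shifting each atom within distance $\sigma_n$ to lie next to some $X_i$ without spoiling the approximation'' fails: by \cref{pro:1} a shift of order $\sigma_n$ perturbs the kernel by $O(1)$, not $o(\sigma_n^{\beta})$; atoms may only be displaced by $O(\sigma^{\beta+1})$ (the width of the sets $V_k$), and the correct mechanism is to \emph{drop} far atoms, not to move them. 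Fixing the count to $\sigma_n^{-(\beta+1)}$ and redoing the balance recovers the corollary exactly as in \cref{lem:8}.
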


To prove \cref{cor:locmix:covar}, note that neither the proof of \cref{lem:3} or
\cref{lem:5} involve the base measure $\alpha$ (indeed, it only involves
$\overline{\alpha}$); thus we can use the sieve $\mathcal{F}_n$ constructed in
\cref{sec:construction-tests-1}. To apply \cref{thm:1} it is then sufficient to
prove that for all $x\in \mathbb R^n$
\begin{equation}
  \label{eq:22}
  \Pi_x(f\ :\ d_n(f,f_0) \leq s \epsilon_n) \gtrsim \exp(-c_2n
  \epsilon_n^2).
\end{equation}
This is done in \cref{lem:8}.

\begin{lemma}
  \label{lem:8}
  Assume that there is a constant $b_8 > 0$ such that
  $\sup_{\mathbf x \in \mathbb R^n}G_{\mathbf x}(\mu \, :\, |\mu - s| \leq t)
  \lesssim t^{b_8}$ for all $0 < t,s \leq 1$. Also assume that $G_{\sigma}$
  satifies \cref{eq:15,eq:16,eq:17}. Then \cref{eq:22} holds for the symmetric
  Gamma location mixture of Gaussians with base measure
  $\overline{\alpha} G_{\mathbf x}$ if
  $\epsilon_n^2 \leq C n^{-2\beta/(3\beta+1)}(\log n)^{2-2\beta/(3\beta+1)}$ for
  an appropriate constant $C > 0$.
\end{lemma}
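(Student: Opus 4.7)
The plan follows the standard three-step prior mass lower bound for Bayesian mixtures of Gaussians (in the style of Ghosal--van der Vaart, Kruijer--Rousseau--van der Vaart, Shen--Tokdar--Ghosal, Naulet--Barat), adapted to the signed-measure symmetric Gamma process, the empirical $d_n$ pseudo-distance, and the covariate-dependent base measure $\alpha = \overline{\alpha} G_{\mathbf{x}}$. Set $\sigma_n \asymp \epsilon_n^{1/\beta}$ (so that $\sigma_n^\beta \asymp \epsilon_n$) and $N \asymp n\epsilon_n^2/\log n$; under the target rate $\epsilon_n^2 \asymp n^{-2\beta/(3\beta+1)}(\log n)^{2-2\beta/(3\beta+1)}$, both $1/\sigma_n$ and $N$ are polynomially of order $n^{1/(3\beta+1)}$ up to logarithmic factors.

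\emph{Step 1 (data-localized discrete approximation of $f_0$).} Since $d_n$ only probes $f$ at the data points, I discretize a kernel mollification of $f_0$ in a data-adapted way. The classical H\"older kernel estimate yields $\|\varphi_{\sigma_n} * f_0 - f_0\|_\infty \lesssim \sigma_n^\beta$ (using a higher-order approximation if $\beta \geq 2$). I then bin the real line into intervals of length $\sigma_n$, keep the $N \lesssim n\epsilon_n^2/\log n$ bins with the largest data counts, pick one $\mu_k^* = x_{i_k}$ per kept bin, and assign weights $w_k^*$ of uniformly bounded total variation by a local quadrature of $\varphi_{\sigma_n} f_0$, producing $M^* = \sum_k w_k^* \delta_{\mu_k^*}$ and $f^*(x) := \int \varphi((x-\mu)/\sigma_n)\,dM^*(\mu)$ with $n^{-1}\sum_i |f^*(x_i) - f_0(x_i)|^2 \leq (s\epsilon_n/2)^2$. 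The data points not covered by the kept bins contribute a vanishing budget, thanks to the decay of $f_0$ at infinity granted by $f_0 \in L^1 \cap \mathtt{C}^\beta$ (uniform continuity plus integrability).

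\emph{Step 2 (reduction to a neighborhood of $(\sigma_n, M^*)$ and prior mass).} Define cells $U_k$ of width $\delta_n$ around each $\mu_k^*$. A direct Lipschitz estimate in $\sigma$ and in the measure $M$ shows that if $\sigma \in [\sigma_n, \sigma_n(1+\sigma_n^\beta)]$, $|M(U_k) - w_k^*| \leq \epsilon_n/N$ for every $k$, and $|M|((\cup_k U_k)^c) \leq \epsilon_n$, then $d_n(f_{\sigma,M}, f^*) \leq s\epsilon_n/2$, whence $d_n(f_{\sigma,M}, f_0) \leq s\epsilon_n$ by the triangle inequality. The prior probability of this event factorizes: the scale contribution is $\exp(-C/\sigma_n)$ by \cref{eq:17}; the cells $U_k$ being disjoint, the $M(U_k)$'s are independent symmetric Gamma variables with parameter $\alpha(U_k)$ under $\Pi_\alpha$. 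Because each $\mu_k^* = x_{i_k}$ is a data point and the density $n^{-1}\sum_i g(\cdot - x_i)$ of $G_{\mathbf{x}}$ is at least $n^{-1}g(0)$ on a neighborhood of $\mu_k^*$, one has $\alpha(U_k) \gtrsim \delta_n/n$. The symmetric Gamma small-ball estimate $\Pi_\alpha(|M(U_k) - w_k^*| \leq \eta) \gtrsim \alpha(U_k)\,\eta\,e^{-C|w_k^*|}$ (valid up to polylog factors for the $\mathrm{SGa}(\alpha(U_k))$ density at the prescribed weight), a tail bound $\Pi_\alpha(|M|((\cup_k U_k)^c) \leq \epsilon_n) \gtrsim \exp(-C\overline{\alpha})$ from the finite total mass of $\alpha$, and the product over $k \leq N$ together with $\sum_k |w_k^*| = O(1)$ yield a lower bound $\exp(-CN\log n - C/\sigma_n) \geq \exp(-c_2 n\epsilon_n^2)$.

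\emph{Main obstacle.} The delicate quantitative steps are: (i) showing that $N \lesssim n\epsilon_n^2/\log n$ atoms suffice to approximate $f_0$ in $d_n$ at an \emph{arbitrary} configuration $\mathbf{x}$ of $n$ data points, which requires exploiting the vanishing of $f_0$ at infinity to discard uncovered tail points uniformly in $\mathbf{x}$ without overshooting the $\epsilon_n$ budget; and (ii) controlling the symmetric Gamma small-ball density at a non-zero weight $w_k^*$ when the local base-measure mass $\alpha(U_k)$ is only of order $1/n$, since $\mathrm{SGa}(\nu)$ concentrates near zero as $\nu \to 0$. The $\log n$ loss from (ii) is precisely what forces the constraint $N \leq c_2 n\epsilon_n^2/\log n$ and produces the polylogarithmic factor $(\log n)^{2 - 2\beta/(3\beta+1)}$ in the stated rate.
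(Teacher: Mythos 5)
There is a genuine gap, and it sits exactly where you flagged the ``main obstacle'': your Step 1 selection rule does not deliver a valid approximation uniformly over arbitrary $\mathbf x$. You keep the $N\asymp n\epsilon_n^2/\log n$ bins with the \emph{largest data counts} and argue that the data points in discarded bins can be ignored ``thanks to the decay of $f_0$ at infinity''. But the lemma must hold for every $\mathbf x\in\mathbb R^n$ with no design assumption (that is the whole point of the covariate-dependent prior), $f_0\in L^1\cap\mathtt C^\beta$ gives no \emph{rate} of decay, and the discarded bins need not be in the tails at all. Concretely, an adversarial design can place decoy clusters of $\sim n/(2N)$ points in each of $N$ far-out bins where $f_0\approx 0$, and $\sim n\sigma_n^{\beta}$ further points, one or few per bin, in the $\lesssim \sigma_n^{-1}$ bins where $f_0\asymp\|f_0\|_\infty$; the count-based rule then discards precisely the bins where $f_0$ is large, and the resulting contribution to $d_n^2$ is of order $\sigma_n^\beta\asymp\epsilon_n\gg\epsilon_n^2$. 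The paper avoids this by thresholding on the \emph{coefficients}, not on data counts: writing $K_{h_\sigma,\sigma}(\chi_\sigma*f_0)=\sum_k u_k\varphi((\cdot-\mu_k)/\sigma)$, it keeps $\Lambda(\mathbf x)=\{k: |u_k|>\sigma^\beta,\ \mu_k \text{ within } \sigma\sqrt{2(\beta+1)\log\sigma^{-1}} \text{ of some } x_i\}$. Then $\sum_k|u_k|\lesssim\sigma^{-1}$ (\cref{pro:2}) forces $|\Lambda(\mathbf x)|\lesssim\sigma^{-(\beta+1)}$ \emph{uniformly in} $\mathbf x$ — which is exactly your $N$ up to logs — while at every data point the dropped atoms cost only $O(h_\sigma^{-1}\sigma^\beta)$ (small-coefficient atoms, via the Gaussian lattice sum of \cref{pro:4}) plus $O(\sigma^\beta)$ (atoms far from all data points), as in \cref{pro:10}. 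No cap by data count, and no appeal to tails of $f_0$, is needed.

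A related weakness is your approximation scheme itself: plain Gaussian mollification only achieves order $\sigma^{\beta\wedge 2}$, and ``assign weights by a local quadrature of $\varphi_{\sigma_n}f_0$'' neither yields order-$\sigma^\beta$ accuracy for $\beta\ge 2$ nor the coefficient bounds ($\sum_k|w_k^*|\lesssim\sigma^{-1}$, $|w_k^*|\lesssim 1$) on which the cardinality and prior-mass computations rest. The paper's dual-kernel construction ($\widehat\eta=\widehat\chi/\widehat\varphi$ on $[-2,2]$, \cref{lem:4,lem:1}) is what supplies both simultaneously; without it (or an equivalent), Steps 1 and 2 of your argument have nothing quantitative to stand on. Your Step 2 is otherwise in the spirit of the paper's (small-ball bound for $\mathrm{SGa}$ as in \cref{pro:7}, base-measure mass $\gtrsim n^{-1}\times$ local mass of $g$ near a data point, scale mass from \cref{eq:17}), with the per-cell tolerance $\epsilon_n/N$ instead of $\sigma^\beta$ being a harmless variant; note only that, like the paper, you implicitly need a lower bound on $g$ near the origin, which is not part of the stated hypotheses.
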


The proof of \cref{lem:8} is given in \cref{sec:prlem8}.

\section{Proofs}
\label{sec:bound-post-distr}

To prove \cref{thm:bigthm} we follow the lines of
\citet{Ghosal2000,Ghosal2001,Ghosal2007}. Namely we need to verify the following
three conditions
\begin{itemize}
  \item Kullback-Leibler condition : For a constant $0 < c_2 < 1/4$,
  \begin{equation}
    \label{eq:KLcond}
    \Pi(\mathtt{KL}(f_0,\epsilon_n)) \geq e^{-c_2 n\epsilon_n^2},
  \end{equation}
  where
  \begin{equation*}
    \mathtt{KL}(f_0,\epsilon_n)
    := \Set*{f \given \frac{1}{2s^2} \int |f_0(x) - f(x)|^2\, dQ_0 (x) \leq
      \epsilon_n^2}.
  \end{equation*}
  \item Sieve condition : There exists $\mathcal F_n \subset \mathcal F$ such
  that
  \begin{equation}
    \label{eq:sievecond}
    \Pi(\mathcal{F}_n^c ) \leq e^{-\frac 12(1+ 2c_2)n\epsilon_n^2 }
  \end{equation}
  \item Tests : Let $N(\epsilon_n / 18, \mathcal{F}_n, d_n)$ be the logarithm of
  the covering number of $\mathcal F_n$ with radius $\epsilon_n / 18$ in the
  $d_n(\cdot,\cdot)$ metric.
  \begin{equation}
    \label{eq:entropycond}
    N(\epsilon_n / 18, \mathcal{F}_n, d_n) \leq \frac{n\epsilon_n^2}{4}.
  \end{equation}
\end{itemize}

The Kullback-Leibler condition is proved by defining an approximation of $f$ by
a discrete mixture under weak tail conditions. Although the general idea is
close to \citet{kruijer:rousseau:vdv:10} or \citet{scricciolo:12}, the
construction remains quite different to be able to handle various tail
behaviours. This is detailed in the following section.

\subsection{Approximation theory}
\label{sec:approximation-theory}

To describe the approximation of $f_0$ by a finite mixture, we first define a
few notations.

Let $\widehat{\chi}$ be a $\mathtt C^{\infty}$ function that equals $1$ on
$[-1,1]$ and $0$ outside $[-2,2]^c$ (think for instance as the convolution of
$\Ind_{[-1,1]}$ with $x\mapsto \exp(-1/(1-x^{2}))\Ind_{[-1,1]}(x)$). For any
$\sigma > 0$ we use the shortened notation
$\widehat{\chi}_{\sigma}(\xi) := \widehat{\chi}(2\sigma\xi)$. Define $\eta$ as
the function which $L^1$ Fourier transform satisfies
$\widehat{\eta}(\xi) = \widehat{\chi}(\xi) / \widehat{\varphi}(\xi)$ for all
$\xi \in [-2,2]$ and $\widehat{\eta}(\xi) = 0$ elsewhere. For two positive real
numbers $h$ and $\sigma$, we define the kernel
$K_{h,\sigma} : \mathbb R \times \mathbb R \rightarrow \mathbb R$ such that
\begin{align*}
  K_{h,\sigma}(x,y) := \frac{h}{\sigma}\sum_{k\in\mathbb Z}\varphi\left(\frac{x
  - h\sigma k}{\sigma} \right) \eta \left(\frac{y - h\sigma k}{\sigma}\right),
  \quad \forall (x,y) \in \mathbb R \times \mathbb R.
\end{align*}
For a measurable function $f$ we introduce the operator associated with the
kernel : $K_{h,\sigma}f(x) = \int K_{h,\sigma}(x,y)f(y)\, dy$. The function
$K_{h,\sigma}f$ will play the role of an \textit{approximation} for the function
$f$, and we will evaluate how this approximation becomes close to $f$ given $h$
and $\sigma$ sufficiently close to zero.

More precisely, we will prove that, when choosing $h$ appropriately, $f$ can be
approximated by $K_{h,\sigma} \chi_\sigma\times f_0 $ to the order
$\sigma^\beta$. Moreover $K_{h,\sigma} \chi_\sigma\times f_0$ can be written as
$\sum_{k \in \mathbb Z} u_k \varphi( (x - \mu_k)/\sigma) )$. In a second step we
approximate $K_{h,\sigma} \chi_\sigma\times f_0 $ by a truncated version of it,
retaining only the $k$'s such that $|u_k|$ is large enough and $|\mu_k|$ not too
large.  In the case of location - scale and hybrid location - scale mixtures we
consider a modification of this approximation to control better the number of
components for which $\sigma $ needs to be small. We believe that these
constructions have interest in themselves. In partcular they shed light on the
relations between Gaussian mixtures and wavelet approximations.

These approximation properties are presented in the following two Lemmas which
are proved in \cref{sec:eta}:
\begin{lemma}
  \label{lem:4}
  There is $C > 0$ depending only on $\beta$ such that for any
  $f_0 \in L^1\cap \mathtt C^{\beta}$ and any $\sigma > 0$ we have
  $|\chi_{\sigma}*f_0(x) - f_0(x)| \leq C\|f\|_{\mathtt C^{\beta}}
  \sigma^{\beta}$ for all $x\in \mathbb R$.
\end{lemma}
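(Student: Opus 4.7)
\medskip

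\noindent\textbf{Proof proposal.} My plan is a standard kernel-approximation argument built around Taylor expansion and a vanishing-moments identity for $\chi$. First, I would observe that $\widehat{\chi} \in C_c^{\infty}(\mathbb R)$ implies $\chi \in \mathcal{S}$ by Fourier inversion, so in particular $y^k \chi(y) \in L^1$ for every $k \geq 0$. Since $\widehat{\chi} \equiv 1$ on $[-1,1]$, every derivative $\widehat{\chi}^{(k)}$ vanishes at the origin for $k \geq 1$, whereas $\widehat{\chi}(0) = 1$. Combined with the identity $\widehat{\chi}^{(k)}(0) = (-i)^k \int y^k \chi(y)\, dy$, this yields
\[
  \int \chi(y)\, dy = 1, \qquad \int y^k \chi(y)\, dy = 0 \quad \text{for } k = 1, 2, \ldots
\]
By Fourier inversion, $\widehat{\chi}_\sigma(\xi) = \widehat{\chi}(2\sigma \xi)$ corresponds to $\chi_\sigma(y) = (2\sigma)^{-1}\chi(y/(2\sigma))$, so the same moment identities hold for $\chi_\sigma$.

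Next, writing $m$ for the largest integer strictly smaller than $\beta$, I would apply Taylor's formula with integral remainder to $f_0 \in \mathtt C^{\beta}$ to get
\[
  f_0(x - y) = \sum_{k=0}^{m} \frac{(-y)^k}{k!} f_0^{(k)}(x) + R(x,y),
\]
where
\[
R(x,y) = \frac{(-y)^m}{(m-1)!} \int_0^1 (1 - t)^{m-1} \bigl[ f_0^{(m)}(x - t y) - f_0^{(m)}(x) \bigr]\, dt.
\]
The Hölder condition $|f_0^{(m)}(x - ty) - f_0^{(m)}(x)| \leq \|f_0\|_{\mathtt C^\beta} (t|y|)^{\beta - m}$ then gives $|R(x,y)| \leq C_\beta \|f_0\|_{\mathtt C^\beta} |y|^\beta$.

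Finally, using $\int \chi_\sigma = 1$ to recenter and the vanishing moments to kill the polynomial terms,
\[
  \chi_\sigma * f_0(x) - f_0(x) = \int \chi_\sigma(y)\bigl[f_0(x - y) - f_0(x)\bigr]\, dy = \int \chi_\sigma(y) R(x,y)\, dy,
\]
so by the pointwise bound on $R$ and a change of variables $y = 2\sigma u$,
\[
  |\chi_\sigma * f_0(x) - f_0(x)| \leq C_\beta \|f_0\|_{\mathtt C^\beta} \int |\chi_\sigma(y)|\, |y|^\beta\, dy = C_\beta' \|f_0\|_{\mathtt C^\beta} \sigma^{\beta} \int |\chi(u)|\, |u|^\beta\, du,
\]
and the last integral is finite because $\chi \in \mathcal S$. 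This is uniform in $x$ and yields the claim.

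The only step that requires some care is the moment-killing identity; everything else is bookkeeping. I do not expect a genuine obstacle here — the result is essentially the classical fact that convolution against a smooth, band-limited kernel with all higher moments vanishing provides $\sigma^\beta$ approximation on Hölder classes, and the only subtle point is reading off the vanishing moments from the flatness of $\widehat{\chi}$ at the origin.
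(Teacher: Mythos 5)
Your proposal is correct and follows essentially the same route as the paper's proof: read off the vanishing moments of $\chi$ from the flatness of $\widehat{\chi}$ at the origin via $\widehat{\chi}^{(q)}(0)=(-i)^q\int y^q\chi(y)\,dy$, then apply Taylor's formula with integral remainder and the H\"older bound on $f_0^{(m)}$. The only cosmetic difference is that you track the factor $2\sigma$ in the rescaling of $\chi_\sigma$ explicitly, which the paper glosses over; this does not affect the argument.
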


\begin{lemma}
  \label{lem:1}
  Let $f_{\sigma} := \chi_{\sigma}*f_0$ and $h \leq 1$. Then there is a
  universal constant $C > 0$ such that
  $|K_{h,\sigma}f_{\sigma}(x) - f_{\sigma}(x)| \leq C \|f_0\|_1 \sigma^{-1}
  e^{-4\pi^2/h^2}$ for all $x\in \mathbb R$.
\end{lemma}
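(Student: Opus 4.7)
The plan is to prove the lemma via the Poisson summation formula after reinterpreting $K_{h,\sigma}f_\sigma$ as a Riemann sum. First I observe that $K_{h,\sigma}f_\sigma(x) = h\sum_{k\in\mathbb Z} g(h\sigma k)\,\varphi((x-h\sigma k)/\sigma)$, where $g := f_\sigma * \eta_\sigma$ with $\eta_\sigma(u):=\sigma^{-1}\eta(u/\sigma)$ (and $\eta$ is real and even because $\widehat\eta$ is). This is precisely a uniform Riemann sum with step $h\sigma$ for the continuous convolution $\sigma^{-1}\int g(t)\varphi((x-t)/\sigma)\,dt = (g*\varphi_\sigma)(x)$. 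The proof then splits into (a) showing that this continuous version equals $f_\sigma(x)$ exactly, and (b) controlling the quadrature error.

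For (a) I would compute in Fourier: $\widehat{g*\varphi_\sigma}(\xi) = \widehat g(\xi)\widehat\varphi(\sigma\xi) = \widehat{f_\sigma}(\xi)\widehat\eta(\sigma\xi)\widehat\varphi(\sigma\xi)$. Since $\widehat{f_\sigma}(\xi) = \widehat\chi(2\sigma\xi)\widehat{f_0}(\xi)$ is supported in $|\sigma\xi|\leq 1$, on which $\widehat\chi(\sigma\xi)=1$, the very design $\widehat\eta(\sigma\xi) = \widehat\chi(\sigma\xi)/\widehat\varphi(\sigma\xi)$ forces $\widehat\varphi(\sigma\xi)\widehat\eta(\sigma\xi)=1$ wherever $\widehat{f_\sigma}$ is nonzero, so $g*\varphi_\sigma = f_\sigma$ identically. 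For (b) I apply the Poisson summation formula to $G(t) := g(t)\varphi((x-t)/\sigma)$, obtaining $K_{h,\sigma}f_\sigma(x) - f_\sigma(x) = \sigma^{-1}\sum_{m\neq 0}\widehat G(2\pi m/(h\sigma))$. Using the product-to-convolution rule $\widehat G(\zeta) = (2\pi)^{-1}(\widehat g * \widehat{\varphi((x-\cdot)/\sigma)})(\zeta)$ together with $\widehat{\varphi((x-\cdot)/\sigma)}(\omega) = \sigma e^{-i\omega x}\widehat\varphi(\sigma\omega)$ (change of variable plus evenness of $\varphi$), evaluation at $\zeta_m = 2\pi m/(h\sigma)$ yields $|\widehat G(\zeta_m)| \leq (\sigma/(2\pi))\int |\widehat g(\omega)|\,|\widehat\varphi(\sigma\omega - 2\pi m/h)|\,d\omega$.

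Two estimates then finish the proof. First, $\widehat g$ is supported in $|\sigma\omega|\leq 1$ and there $|\widehat g(\omega)| \leq \|f_0\|_1/\widehat\varphi(\sigma\omega)$ is uniformly bounded (since $\widehat\varphi(\sigma\omega)\geq \sqrt{2\pi}e^{-1/2}$ on this set), giving $\int |\widehat g| \lesssim \|f_0\|_1/\sigma$. Second, on that same support one has $|\sigma\omega - 2\pi m/h| \geq 2\pi|m|/h - 1$, hence $|\widehat\varphi(\sigma\omega - 2\pi m/h)| \leq \sqrt{2\pi}\exp(-(2\pi|m|/h - 1)^2/2)$, and the series in $m$ is dominated by $m=\pm 1$. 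Multiplied by the $\sigma^{-1}$ outside the Poisson sum, these give the desired $C\|f_0\|_1\sigma^{-1}\exp(-c/h^2)$ bound; the specific constant $4\pi^2$ in the statement emerges from a careful bookkeeping of the Gaussian tail estimate when $h\leq 1$. The main conceptual obstacle is the perfect-reconstruction identity in (a): it is the factor-two safety margin between the bandwidth of $\widehat\chi_\sigma$ (supported on $|\sigma\xi|\leq 1$) and the support of $\widehat\eta(\sigma\cdot)$ ($|\sigma\xi|\leq 2$) that lets $\widehat\chi(\sigma\xi) = 1$ throughout the support of $\widehat{f_\sigma}$, thus making the $m=0$ Poisson term reproduce $f_\sigma$ and cancel. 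Once this reconstruction is in hand, step (b) reduces to a routine Gaussian tail estimate for the aliasing terms.
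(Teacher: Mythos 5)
Your proposal is correct and takes essentially the same route as the paper: both arguments rest on the band-limitation of $\widehat{f}_{\sigma}$ to $\{|\sigma\xi|\leq 1\}$ combined with $\widehat{\eta}\,\widehat{\varphi}=\widehat{\chi}=1$ there (so the $m=0$ Poisson term reproduces $f_{\sigma}$ exactly), and both control the aliasing terms via the Gaussian decay of $\widehat{\varphi}$ together with $\|\widehat{f}_{\sigma}\|_1\lesssim \sigma^{-1}\|f_0\|_1$; applying Poisson summation to $G(t)=g(t)\varphi((x-t)/\sigma)$ instead of to the modulated Gaussian comb inside the $\xi$-integral is only a Fubini-level reorganization of the identical computation. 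One caveat: your tail estimate honestly yields $\exp(-(2\pi/h-1)^2/2)$, i.e.\ exponent of order $2\pi^2/h^2$, and no bookkeeping upgrades this to $4\pi^2/h^2$ — but the paper's own final display has exactly the same slack, and the precise constant in the exponent is immaterial downstream since $h_{\sigma}$ is only fixed up to a constant factor.
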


We now present the approximation schemes in the context of location mixtures.

\subsection{Construction of the approximation under location mixtures}
\label{sec:location-mixtures-1}

Let $0 < \sigma \leq 1$ and
$h_{\sigma} \sqrt{\log \sigma^{-1}}:= 2\pi\sqrt{\beta +1}$. Then combining the
results of \cref{lem:4} and \cref{lem:1} we can conclude that
$|K_{h_{\sigma},\sigma}(\chi_{\sigma}*f_0)(x) - f_0(x)| \lesssim
\sigma^{\beta}$. Now we define the coefficients $u_k$, $k\in \mathbb Z$ so that
\begin{align*}
  K_{h_{\sigma},\sigma}(\chi_{\sigma} *f_0)(x) := \sum_{k\in \mathbb Z} u_k\,
  \varphi\left( \frac{x - \mu_k}{\sigma} \right),\quad \forall k \in \mathbb Z,
\end{align*}
where $\mu_k := h_{\sigma}\sigma k$ for all $k\in \mathbb Z$. Let define
\begin{equation*}
  \Lambda := \Set*{k \in \mathbb Z \given
    |u_k| > \sigma^{\beta},\quad |\mu_k| \leq \sigma^{-2\beta/p} +
    \sigma \sqrt{2(\beta + 1)\log\sigma^{-1}}},
\end{equation*}
$U_{\sigma} := \Set{\sigma' \given \sigma \leq \sigma' \leq \sigma(1 +
  \sigma^{\beta})}$, and for all $k\in \Lambda$ we define
$V_k := \Set{\mu \given |\mu - \mu_k| \leq \sigma^{\beta + 1}}$ and
$V = \cup_{k \in \Lambda} V_k$. We also denote
\begin{equation*}
  \mathcal{M}_{\sigma} := \Set*{M \, \mbox{ signed measure on } \mathbb R \given
    \begin{array}{l}
      |M(V_{k}) - u_{k}| \leq \sigma^{\beta},\\
      \forall k \in \Lambda: \,
      |M|(V^c) \leq \sigma^{\beta}
    \end{array}
  },
\end{equation*}
and for any $M\in \mathcal{M}_{\sigma}$, we write
$f_{M,\sigma}(x) := \int \varphi((x - \mu)/\sigma)\, dM(\mu)$.

\begin{proposition}
  \label{pro:13}
  For $\sigma > 0$ small enough, it holds
  $|\Lambda| \lesssim \sigma^{-(\beta+1)} \wedge
  h_{\sigma}^{-1}\sigma^{-(2\beta/p+1)}$.
\end{proposition}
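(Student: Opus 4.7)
The plan is to bound $|\Lambda|$ by the two constraints that define it separately, and then take the minimum. Write
\[
|\Lambda| \;\le\; \bigl|\{k\in\mathbb Z : |\mu_k| \le \sigma^{-2\beta/p} + \sigma\sqrt{2(\beta+1)\log\sigma^{-1}}\}\bigr| \;\wedge\; \bigl|\{k\in\mathbb Z : |u_k| > \sigma^{\beta}\}\bigr|.
\]
The first factor is immediate: since $\mu_k = h_\sigma\sigma k$ and for $\sigma$ small enough $\sigma^{-2\beta/p}$ dominates $\sigma\sqrt{\log\sigma^{-1}}$, the number of admissible $k$ is at most $\lesssim \sigma^{-2\beta/p}/(h_\sigma \sigma) = h_\sigma^{-1}\sigma^{-(2\beta/p+1)}$.

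For the second factor, the key is Markov's inequality applied to $\sum_k |u_k|$. Changing variables in the definition of $K_{h_\sigma,\sigma}$ yields the representation
\[
u_k \;=\; \frac{h_\sigma}{\sigma}\int \eta\!\left(\frac{w - h_\sigma\sigma k}{\sigma}\right) f_\sigma(w)\,dw,
\qquad f_\sigma := \chi_\sigma * f_0.
\]
Summing in $k$ and using Fubini,
\[
\sum_{k\in\mathbb Z} |u_k| \;\le\; \frac{h_\sigma}{\sigma} \int |f_\sigma(w)| \Bigl(\sum_{k\in\mathbb Z}\bigl|\eta(w/\sigma - h_\sigma k)\bigr|\Bigr)\,dw.
\]
Since $\eta \in \mathcal S$, one has $|\eta(x)| \lesssim (1+x^2)^{-1}$, so by comparing the sum to the integral $\int (1+|t|^2)^{-1}\,dt$ (with spacing $h_\sigma \le 1$) one gets the uniform bound $\sum_k |\eta(w/\sigma - h_\sigma k)| \lesssim 1/h_\sigma$. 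This is the only place where some care is needed; it is elementary but is the step whose verification uses the Schwartz property of $\eta$ decisively. Combining with Young's inequality $\|f_\sigma\|_1 \le \|\chi_\sigma\|_1 \|f_0\|_1 = \|\chi\|_1 \|f_0\|_1$ (invariance of the $L^1$ norm of $\chi_\sigma$ under dilation), one obtains
\[
\sum_{k\in\mathbb Z} |u_k| \;\lesssim\; \frac{1}{\sigma}\|f_0\|_1.
\]

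Finally, Markov's inequality gives
\[
\bigl|\{k : |u_k| > \sigma^\beta\}\bigr| \;\le\; \sigma^{-\beta} \sum_{k} |u_k| \;\lesssim\; \sigma^{-(\beta+1)}.
\]
Combining the two bounds yields $|\Lambda| \lesssim \sigma^{-(\beta+1)} \wedge h_\sigma^{-1}\sigma^{-(2\beta/p+1)}$, as claimed. The main (mild) obstacle is the uniform bound on $\sum_k |\eta(\cdot - h_\sigma k)|$; everything else is bookkeeping with the definitions and a single application of Young's inequality.
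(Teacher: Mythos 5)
Your proof is correct and follows essentially the same route as the paper: the lattice-spacing count for the $|\mu_k|$ constraint, and the Markov/Chebyshev count for the $|u_k|>\sigma^\beta$ constraint via the bound $\sum_k|u_k|\lesssim\sigma^{-1}\|f_0\|_1$. The only difference is that you re-derive that $\ell^1$ bound inline (including the uniform estimate $\sum_k|\eta(\cdot-h_\sigma k)|\lesssim h_\sigma^{-1}$), whereas the paper simply cites its auxiliary \cref{pro:2}, whose proof is exactly your argument.
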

\begin{proof}
  Because there is a separation of $h_{\sigma}\sigma$ between two consecutive
  $\mu_k$, it is clear that
  $|\Lambda| \leq 2h_{\sigma}^{-1}\sigma^{-(2\beta/p + 1)}$. Moreover, from
  \cref{pro:2} we have the following estimate.
  \begin{equation*}
    \|f_0\|_1\sigma^{-1} \gtrsim \sum_{k\in \mathbb Z}|u_k| \geq \sum_{k\in
      \Lambda}|u_k| \geq \sigma^{\beta}|\Lambda|. \qedhere
  \end{equation*}
\end{proof}

\begin{proposition}
  \label{pro:9}
  For all $x \in \mathbb R$, all $\sigma > 0$ small enough and all
  $M \in \mathcal{M}_{\sigma}$ it holds
  $|f_{M,\sigma}(x) - f_0(x)| \lesssim h_{\sigma}^{-1}$.
\end{proposition}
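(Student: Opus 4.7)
The approach I would take is to introduce the reference approximation $\tilde f_\sigma(x) := K_{h_\sigma,\sigma}(\chi_\sigma * f_0)(x) = \sum_{k \in \mathbb Z} u_k\,\varphi((x-\mu_k)/\sigma)$ and split via the triangle inequality $|f_{M,\sigma}(x) - f_0(x)| \leq |f_{M,\sigma}(x) - \tilde f_\sigma(x)| + |\tilde f_\sigma(x) - f_0(x)|$, then bound each piece separately.

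For the kernel approximation error, I would combine \cref{lem:4} (providing $|\chi_\sigma*f_0(x) - f_0(x)| \lesssim \sigma^\beta$) and \cref{lem:1} (providing $|K_{h_\sigma,\sigma}(\chi_\sigma*f_0)(x) - \chi_\sigma*f_0(x)| \lesssim \sigma^{-1}e^{-4\pi^2/h_\sigma^2}$). The defining identity $h_\sigma \sqrt{\log\sigma^{-1}} = 2\pi\sqrt{\beta+1}$ makes the exponential factor precisely $\sigma^{\beta+1}$, yielding the combined bound $|\tilde f_\sigma(x) - f_0(x)| \lesssim \sigma^\beta$, which is well below $h_\sigma^{-1}$.

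For the signed-measure perturbation, I would split $\mathbb R = V \cup V^c$ with $V = \bigcup_{k\in\Lambda}V_k$, and on each $V_k$ add and subtract $u_k\varphi((x-\mu_k)/\sigma)$ to write
\begin{align*}
f_{M,\sigma}(x) - \tilde f_\sigma(x)
&= \sum_{k\in\Lambda}(M(V_k)-u_k)\varphi((x-\mu_k)/\sigma) \\
&\quad + \sum_{k\in\Lambda}\int_{V_k}\bigl[\varphi((x-\mu)/\sigma)-\varphi((x-\mu_k)/\sigma)\bigr]\,dM(\mu) \\
&\quad + \int_{V^c}\varphi((x-\mu)/\sigma)\,dM(\mu) - \sum_{k\notin\Lambda}u_k\varphi((x-\mu_k)/\sigma).
\end{align*}
The constraint $|M(V_k)-u_k|\leq\sigma^\beta$ combined with the uniform Riemann-sum bound $\sum_{k\in\mathbb Z}\varphi((x-\mu_k)/\sigma)\lesssim h_\sigma^{-1}$ (from spacing $h_\sigma\sigma$) gives the first line $\lesssim\sigma^\beta h_\sigma^{-1}$. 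The $V^c$-integral is at most $|M|(V^c)\leq\sigma^\beta$ since $\varphi\leq 1$. For $k\notin\Lambda$: if $|u_k|\leq\sigma^\beta$, the Riemann-sum bound again yields $\sigma^\beta h_\sigma^{-1}$; if $|\mu_k|>\sigma^{-2\beta/p}+\sigma\sqrt{2(\beta+1)\log\sigma^{-1}}$, the pointwise bound $|u_k|\lesssim h_\sigma$ (obtained from $u_k = h_\sigma \int \eta(y)\,f_\sigma(\mu_k+\sigma y)\,dy$ together with $\|f_\sigma\|_\infty\lesssim\|f_0\|_\infty$) combined with Gaussian tail decay renders the contribution negligible for any $x$.

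The hard part is the Lipschitz remainder $\sum_{k\in\Lambda}\int_{V_k}[\varphi((x-\mu)/\sigma)-\varphi((x-\mu_k)/\sigma)]\,dM(\mu)$. On $V_k$ the integrand is at most $C\sigma^\beta$ by Lipschitz continuity (since $|\mu-\mu_k|/\sigma\leq\sigma^\beta$), but this naive estimate produces $C\sigma^\beta|M|(V_k)$ and requires control of $|M|(V_k)$, whereas $\mathcal M_\sigma$ only constrains the signed mass $M(V_k)$ and the total variation $|M|(V^c)$. To close this gap, I would sharpen the pointwise estimate using $\varphi'(z)=-ze^{-z^2/2}$ so the integrand decays as $C\sigma^\beta (1+|x-\mu_k|/\sigma)e^{-(x-\mu_k)^2/(4\sigma^2)}$, and then exploit \cref{pro:13} to bound $|\Lambda|\lesssim h_\sigma^{-1}\sigma^{-(2\beta/p+1)}$. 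Spatially localising the sum to the $O(h_\sigma^{-2})$ indices with $|x-\mu_k|\lesssim\sigma\sqrt{\log\sigma^{-1}}$, and using that relevant $M$'s satisfy $|M|(V_k)\lesssim|u_k|+\sigma^\beta\lesssim h_\sigma$, one obtains a total error of order $h_\sigma^{-1}$, which absorbs all other contributions and yields the claim.
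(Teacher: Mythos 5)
Your route is correct in substance but is genuinely different from, and heavier than, the paper's. You compare $f_{M,\sigma}$ with the intermediate approximation $K_{h_{\sigma},\sigma}(\chi_{\sigma}*f_0)$, which is essentially the decomposition the paper reserves for the refined, localized bound of \cref{pro:10}. The paper's proof of \cref{pro:9} never subtracts the approximation at all: it writes $|f_{M,\sigma}(x)-f_0(x)|\le |f_{M,\sigma}(x)|+\|f_0\|_{\infty}$ and bounds $|f_{M,\sigma}(x)|$ by splitting into the indices with $|x-\mu_k|\le 2\sigma$ (at most $O(h_{\sigma}^{-1})$ of them because of the $h_{\sigma}\sigma$ spacing, each contributing $O(1)$ since $|M|(V_k)\lesssim \sigma^{\beta}+|u_k|\lesssim 1$ by \cref{pro:2}), the remaining $k\in\Lambda$ (handled by $\sup_k|M|(V_k)$ times the Riemann-sum bound $\sum_k\varphi((x-\mu_k)/\sigma)\lesssim h_{\sigma}^{-1}$ as in \cref{pro:4}), and the $V^c$ part ($\le\sigma^{\beta}$). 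So for the crude $h_{\sigma}^{-1}$ target, your Lipschitz remainder, the sharpened Gaussian decay, the use of \cref{pro:13} and the spatial localization are extra work; what your approach buys is essentially the machinery of \cref{pro:10}, not anything needed here.

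Two soft spots. First, you correctly observe that $\mathcal{M}_{\sigma}$ constrains only the signed masses $M(V_k)$ and $|M|(V^c)$, but your way of ``closing this gap'' does not close it: the sharper pointwise estimate and the localization only reduce the multiplicity, and in the end you still invoke $|M|(V_k)\lesssim |u_k|+\sigma^{\beta}$, which is exactly the uncontrolled quantity (for a genuinely signed $M$ with large cancelling masses inside a single $V_k$ the stated bound would in fact fail). The paper makes the very same step, writing $|M|(V_k)\le |M(V_k)-u_k|+|u_k|$ without comment, so you are on par with it, but you should present this as an (implicit) strengthening of the defining constraints of $\mathcal{M}_{\sigma}$ rather than as something your localization argument delivers. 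Second, your dismissal of the indices excluded from $\Lambda$ because $|\mu_k|$ is large rests on ``Gaussian tail decay \ldots for any $x$'', which is false when $x$ itself lies near such a far-out $\mu_k$; the fix is already in your toolkit: $\sup_k|u_k|\lesssim h_{\sigma}\|\chi_{\sigma}*f_0\|_{\infty}\lesssim h_{\sigma}$ together with $\sum_k\varphi((x-\mu_k)/\sigma)\lesssim h_{\sigma}^{-1}$ gives an $O(1)$ contribution uniformly in $x$, which is $\lesssim h_{\sigma}^{-1}$, so the conclusion stands once the justification is corrected.
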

\begin{proof}
  For any $M \in \mathcal{M}_{\sigma}$, we have that
  $|f_{M,\sigma}(x) - f_0(x)| \leq |f_{M,\sigma}(x)| + \|f_0\|_{\infty}$. But,
  with
  $\mathcal{I} \equiv \mathcal{I}(x) := \Set{k \in \mathbb Z \given |x - \mu_k|
    \leq 2\sigma}$,
  \begin{multline}
    \label{eq:12}
    f_{M,\sigma}(x) = \sum_{k\in \Lambda \cap \mathcal{I}}\int_{V_k}
    \varphi\left(
      \frac{x - \mu}{\sigma} \right)\,dM(\mu)\\
    + \sum_{k\in \Lambda \cap \mathcal{I}^c}\int_{V_k} \varphi\left( \frac{x -
        \mu}{\sigma} \right)\,dM(\mu) + \int_{V^c}\varphi\left(\frac{x -
        \mu}{\sigma} \right)\,dM(\mu).
  \end{multline}
  Clearly the last term of this last expression is bounded above by
  $\|\varphi\|_{\infty} \sigma^{\beta}$. For the second term, we have for any
  $\mu \in V_k$ with $k \in \mathcal{I}^c$ that
  $|x - \mu| \geq |x - \mu_k| - |\mu - \mu_k| \geq |x - \mu_k|/2$. Then the
  second term of the rhs of \cref{eq:12} is bounded above by
  \begin{gather*}
    \sup_{k\in \Lambda \cap \mathcal{I}^c}|M|(V_k) \sum_{k\in \mathbb
      Z}\varphi\left( \frac{x - h_{\sigma}\sigma k}{\sigma} \right).
  \end{gather*}
  Proceeding as in the proof of \cref{pro:4}, we deduce that the series in the
  last expression is bounded above by a constant times $1/h_{\sigma}$, whereas
  \cref{pro:2} and Young's inequality yields
  $|M|(V_k) \leq |M(V_k) - u_k| + |u_k| \lesssim \sigma^{\beta} +
  \|\chi_{\sigma}*f_0\|_{\infty} \leq \sigma^{\beta} +
  \|\chi\|_1\|f_0\|_{\infty}$. Therefore the second term of the rhs in
  \cref{eq:12} is bounded by a constant multiple of $h_{\sigma}^{-1}$. Regarding
  the first term in \cref{eq:12}, it is  bounded by
  $\|\varphi\|_{\infty}|\mathcal{I}| \sup_{k\in \Lambda}|M|(V_k)$, which is in
  turn bounded by $h_{\sigma}^{-1}$ times a constant.
\end{proof}

\begin{proposition}
  \label{pro:10}
  For all $\sigma > 0$ small enough, all $x\in \mathbb R$ with
  $|x| \leq \sigma^{2\beta/p}$ and all $M \in \mathcal{M}_{\sigma}$ it holds
  $|f_{M,\sigma}(x) - f_0(x)| \lesssim h_{\sigma}^{-2}\sigma^{\beta}$.
\end{proposition}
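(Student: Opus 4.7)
The plan is to refine Proposition 9 on the smaller region $\{|x| \leq \sigma^{2\beta/p}\}$ by comparing $f_{M,\sigma}(x)$ not to zero (as in Proposition 9) but to the grid-based approximation $K_{h_\sigma,\sigma}(\chi_\sigma * f_0)(x)$. By the choice $h_\sigma \sqrt{\log \sigma^{-1}} = 2\pi\sqrt{\beta+1}$ combined with \cref{lem:4} and \cref{lem:1}, we already have $|K_{h_\sigma,\sigma}(\chi_\sigma*f_0)(x) - f_0(x)| \lesssim \sigma^\beta$, so it suffices to show $|f_{M,\sigma}(x) - K_{h_\sigma,\sigma}(\chi_\sigma*f_0)(x)| \lesssim h_\sigma^{-2}\sigma^\beta$.

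Expanding both quantities and using the definition of $\mathcal M_\sigma$, I would decompose the difference into (i) a sum of matching errors $T_k := \int_{V_k}\varphi((x-\mu)/\sigma)\,dM(\mu) - u_k \varphi((x-\mu_k)/\sigma)$ over $k \in \Lambda$, (ii) a residual mass $\int_{V^c}\varphi((x-\mu)/\sigma)\,dM(\mu)$, bounded by $\|\varphi\|_\infty \sigma^\beta$ thanks to $|M|(V^c)\leq \sigma^\beta$, and (iii) a sum $\sum_{k\notin \Lambda} u_k\varphi((x-\mu_k)/\sigma)$ of truncated coefficients. For the main piece (i), each $T_k$ can be rewritten as $(M(V_k)-u_k)\varphi((x-\mu_k)/\sigma) + \int_{V_k}[\varphi((x-\mu)/\sigma)-\varphi((x-\mu_k)/\sigma)]\,dM(\mu)$; by a first-order Taylor bound, $|\varphi((x-\mu)/\sigma)-\varphi((x-\mu_k)/\sigma)| \leq \|\varphi'\|_\infty \sigma^\beta$ because $|\mu-\mu_k|\leq \sigma^{\beta+1}$ on $V_k$, and by the definition of $\mathcal M_\sigma$, $|M(V_k)-u_k|\leq \sigma^\beta$ and $|M|(V_k)\leq \sigma^\beta+|u_k|$ with $|u_k|\lesssim \|\chi_\sigma*f_0\|_\infty = O(1)$. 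The crucial step is then to localise: set $R := \sigma\sqrt{8(\beta+1)\log \sigma^{-1}}$ and $\mathcal I_R := \{k : |x-\mu_k| \leq R\}$, so that $|\mathcal I_R|\leq 2R/(h_\sigma\sigma)+1 \lesssim h_\sigma^{-2}$. For $k\in \Lambda \cap \mathcal I_R$ each $|T_k|\lesssim \sigma^\beta$, and summing gives precisely $\lesssim h_\sigma^{-2}\sigma^\beta$. For $k\in \Lambda\setminus \mathcal I_R$ the Gaussian decay $\varphi((x-\mu)/\sigma)\leq \sigma^{\beta+1}$ (valid for $\mu\in V_k$ since $|x-\mu|\geq R/2$) combined with $\sum_k |u_k| \lesssim \sigma^{-1}$ (as already used in the proof of \cref{pro:13}) and $|\Lambda|\sigma^\beta \lesssim \sigma^{-1}$ gives a total contribution of order $\sigma^\beta$.

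The assumption $|x|\leq \sigma^{2\beta/p}$ enters exclusively in piece (iii). Splitting $k\notin \Lambda$ according to whether $|u_k|\leq \sigma^\beta$ or $|\mu_k|>\sigma^{-2\beta/p}+\sigma\sqrt{2(\beta+1)\log \sigma^{-1}}$: the first case contributes $\sigma^\beta \sum_k \varphi((x-\mu_k)/\sigma) \lesssim \sigma^\beta/h_\sigma \leq h_\sigma^{-2}\sigma^\beta$ by the lattice Riemann-sum estimate used in the proof of \cref{pro:9}; the second forces $|x-\mu_k|\geq \sigma\sqrt{2(\beta+1)\log\sigma^{-1}}$ for $\sigma$ small, hence $\varphi((x-\mu_k)/\sigma)\leq \sigma^{\beta+1}$, and summing against $\sum_k |u_k|\lesssim \sigma^{-1}$ again gives $\lesssim \sigma^\beta$. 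Collecting (i), (ii) and (iii) yields the claim.

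The main obstacle is the calibration of $R$: it must be small enough that $|\mathcal I_R|$ remains logarithmic in $\sigma^{-1}$ (producing the $h_\sigma^{-2}$ factor), yet large enough that the Gaussian decay $\exp(-R^2/(8\sigma^2))$ beats the $\sigma^{-1}$ blow-up of $\sum_k |u_k|$ and $|\Lambda|\sigma^\beta$ in the far-field. The choice $R=\sigma\sqrt{8(\beta+1)\log \sigma^{-1}}$ does both simultaneously, and a slightly larger constant would still work. The argument improves on \cref{pro:9} exactly because, under the tail control $|x|\leq \sigma^{2\beta/p}$, the comparison term $K_{h_\sigma,\sigma}(\chi_\sigma * f_0)(x)$ absorbs the leading order-$h_\sigma^{-1}$ behaviour, leaving only a local discretisation error governed by the cardinality $|\mathcal I_R|$ instead of the full lattice sum $\sum_k \varphi((x-\mu_k)/\sigma)$.
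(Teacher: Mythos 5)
Your proposal is correct and follows essentially the same route as the paper: compare $f_{M,\sigma}$ to $K_{h_\sigma,\sigma}(\chi_\sigma*f_0)$, decompose into a matched part over $\Lambda$, the residual mass on $V^c$, and the truncated coefficients on $\Lambda^c$, and split near/far at a radius of order $\sigma\sqrt{\log\sigma^{-1}}$ so that the near-field cardinality is $O(h_\sigma^{-2})$ while the far-field Gaussian decay $\sigma^{\beta+1}$ beats the $\sigma^{-1}$ growth of $\sum_k|u_k|$ and $|\Lambda|\sigma^\beta$. The only differences are cosmetic (the paper's cutoff $A_\sigma(\beta)$ also absorbs a $\log|\Lambda|$ term and is applied to the whole decomposition at once), and your calibration of $R$ and use of the $|x|\le\sigma^{-2\beta/p}$ constraint in the $\Lambda_2^c$ piece match the paper's argument.
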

\begin{proof}
  We define
  $A_{\sigma}(\beta) := \sqrt{2\log |\Lambda| + 2(\beta + 1)\log
    \sigma^{-1}}$. Then for any $M\in \mathcal{M}_{\sigma}$, letting
  $\mathcal{J} \equiv \mathcal{J}(x) := \Set{k \in \mathbb Z \given |x - \mu_k|
    \leq 2\sigma A_{\sigma}(\beta)}$, we may write
  \begin{multline}
    \label{eq:14}
    f_{M,\sigma}(x) - K_{h_{\sigma},\sigma}(\chi_{\sigma}*f_0)(x) = \sum_{k\in
      \Lambda \cap \mathcal{J}}\int_{V_k}\left[ \varphi\left( \frac{x -
          \mu}{\sigma} \right)
      - \varphi\left(\frac{x - \mu_k}{\sigma} \right) \right]\,dM(\mu)\\
    + \sum_{k\in \Lambda \cap \mathcal{J}}\left[M(V_k) - u_k \right]
    \varphi\left( \frac{x - \mu_k}{\sigma} \right) + \sum_{k\in \Lambda \cap
      \mathcal{J}^c}\int_{V_k} \varphi\left( \frac{x - \mu}{\sigma}
    \right)\,dM(\mu)\\
    - \sum_{k\in \Lambda \cap \mathcal{J}^c}u_k\, \varphi\left( \frac{x -
        \mu_k}{\sigma} \right) - \sum_{k\in \Lambda^c }u_k\, \varphi\left(
      \frac{x - \mu_k}{\sigma} \right)
    + \int_{V^c}\varphi\left(\frac{x - \mu}{\sigma} \right)\,dM(\mu)\\
    := r_1(x) + r_2(x) + r_3(x) + r_4(x) + r_5(x) + r_6(x).
  \end{multline}
  With the same argument as in \cref{pro:13}, we deduce that
  $|\mathcal{J}| \leq 2h_{\sigma}^{-1}A_{\sigma}(\beta)$. The same proposition
  implies $A_{\sigma}(\beta) \lesssim \sqrt{\log\sigma^{-1}}$. Recalling that
  $|M|(V_k) \lesssim 1 + \|\chi\|_1\|f_0\|_{\infty}$ for all $k\in \Lambda$ and
  all $M\in \mathcal{M}_{\sigma}$, it follows from \cref{pro:1} that
  $|r_1(x)| \lesssim A_{\sigma}(\beta) h_{\sigma}^{-1} \sigma^{\beta} $. From
  the definition of $\mathcal{M}_{\sigma}$, it comes
  $|r_2(x)| \leq \|\varphi\|_{\infty}|\mathcal{J}| \sigma^{\beta} \leq
  2\|\varphi\|_{\infty}A_{\sigma}(\beta)h_{\sigma}^{-1}\sigma^{\beta}$. Whenever
  $k \in \Lambda \cap \mathcal{J}^c$ and $\mu \in V_k$, it holds
  $|x - \mu| \geq |x - \mu_k| - |\mu - \mu_k| \geq \sigma
  A_{\sigma}(\beta)$. Therefore,
  $|r_3(x)| \lesssim \varphi(A_{\sigma}(\beta)) |\Lambda| \lesssim \sigma^{\beta
    + 1}$. With the same argument, \cref{pro:2} and Young's inequality we get
  $|r_4(x)| \lesssim \|\chi_{\sigma}*f_0\|_{\infty} \varphi(2A_{\sigma}(\beta))
  |\Lambda| \leq \|\chi\|_1\|f_0\|_{\infty} \sigma^{\beta}$. Regarding $r_5$,
  we rewrite $\Lambda^c = \Lambda_1^c\cup \Lambda_2^c$, with
  $\Lambda_1^c := \Set{k \in \mathbb Z \given |u_k| \leq \sigma^{\beta}}$ and
  $\Lambda_2^c := \Set{k \in \mathbb Z \given |\mu_k| > \sigma^{-2\beta/p} +
    \sigma \sqrt{2(\beta + 1)\log \sigma^{-1}}}$. Then,
  \begin{align}
    \notag
    |r_5(x)|
    &\leq
      \sum_{k\in \Lambda_1^c}|u_k|\, \varphi\left(\frac{x -
      \mu_k}{\sigma}\right) + \sum_{k\in \Lambda_2^c}|u_k|\,
      \varphi\left(\frac{x - \mu_k}{\sigma}\right)\\
    \label{eq:20}
    &\leq \sigma^{\beta}\sup_{x\in \mathbb R}\sum_{k\in \mathbb Z}
      \varphi\left(\frac{x - \mu_k}{\sigma}\right) + \sum_{k\in
      \Lambda_2^c}|u_k|\, \varphi\left(\frac{x - \mu_k}{\sigma}\right).
  \end{align}
  The first term of the rhs of \cref{eq:20} is bounded by a multiple constant of
  $h_{\sigma}^{-1}\sigma^{\beta}$, with the same argument as in the proof of
  \cref{pro:4}. By definition of $\Lambda_2^c$,
  $|x - \mu_k| \geq \sigma \sqrt{2(\beta + 1)\log \sigma^{-1}}$ when
  $k\in \Lambda^c_2$ and $|x| \leq \sigma^{-2\beta/p}$. This implies, together
  with \cref{pro:2} and Young's inequality, that the second term of the rhs of
  \cref{eq:20} is bounded by a constant multiple of
  $\sigma^{\beta + 1}\sum_{k\in\mathbb Z}|u_k| \lesssim
  \|\chi_{\sigma}*f_0\|_1\sigma^{\beta} \leq \|\chi\|_1\|f_0\|_1\sigma^{\beta}$
  for all $|x| \leq \sigma^{-2\beta/p}$. Finally, we have the trivial bound
  $|r_6(x)| \leq \|\varphi\|_{\infty}|M|(V^c) \leq
  \|\varphi\|_{\infty}\sigma^{\beta}$.
\end{proof}

\subsection{Construction of the approximation under location-scale and hybrid
  location-scale mixtures}
\label{sec:locat-scale-mixt-1}

Let $\sigma_0 := 1$ and define recursively $\sigma_{j+1} := \sigma_j/2$ for any
$j\geq 0$. Let $\Delta_0 := f_0 - \chi_{\sigma_0}*f_0$, and define recursively
$\Delta_{j+1} := \Delta_j - \chi_{\sigma_{j+1}}*\Delta_j$, for any $j\geq 0$.

The general idea of the construction is that
$|\Delta_j| \lesssim \sigma_j^\beta$, as shown in \cref{pro:5} in appendix, and
that similarly to wavelet decomposition, we approximate a function $f_0$
H\"older $\beta$ by
\begin{align*}
  f_1 := K_0(\chi_{\sigma_0}*f_0) +
  \sum_{j=1}^JK_j(\chi_{\sigma_j}*\Delta_{j-1}).
\end{align*}
where $J \geq 1$ is a large enough integer,
$h_J\sqrt{J} := 2\pi / \sqrt{\beta \log 2}$, and $K_j := K_{h_J,\sigma_j}$. By
induction, we get that
$\Delta_j = \Delta_0 - \sum_{l=0}^{j-1} \chi_{\sigma_{l+1}}*\Delta_l$. It
follows,
\begin{align*}
  f_1 - f_0
  &= K_0(\chi_{\sigma_0} * f_0) - f_0 +
    \sum_{j=1}^JK_j(\chi_{\sigma_j}*\Delta_{j-1})\\
  &= \Delta_J +
    K_0(\chi_{\sigma_0}*f_0) - \chi_{\sigma_0}*f_0+
    \sum_{j=1}^J\left[K_j(\chi_{\sigma_j}*\Delta_{j-1}) -
    \chi_{\sigma_j}* \Delta_{j-1}\right].
\end{align*}
Therefore, from \cref{lem:1,pro:5} and Young's inequality, the error of
approximating $f_0$ by $f_1$ is
\begin{multline*}
  |f_1(x) - f_0(x)|\\
  \begin{aligned}
    &\leq |\Delta_J| + |K_0(\chi_{\sigma_0}*f_0) - \chi_{\sigma_0}*f_0| +
    \sum_{j=1}^J|K_j(\chi_{\sigma_j}*\Delta_{j-1})
    - \chi_{\sigma_j}* \Delta_{j-1}|\\
    &\lesssim \|f_0\|_{\mathtt C^{\beta}} \sigma_J^{\beta} +
    \|\chi_{\sigma_0}*f_0\|_1 \sigma_0^{-1}e^{-4\pi^2/h_J^2} + e^{-4\pi^2/h_J^2}
    \sum_{j=1}^J\|\chi_{\sigma_j}*\Delta_{j-1}\|\sigma_j^{-1}\\
    &\lesssim \|f_0\|_{\mathtt C^{\beta}}\sigma_J^{\beta} + \|f\|_1
    e^{-4\pi^2/h_J^2}
    + \|f_0\|_1 e^{-4\pi^2/h_J^2}\sum_{j=1}^J2^j\\
    &\lesssim \|f_0\|_{\mathtt C^{\beta}}\sigma_J^{\beta} + \|f_0\|_1(1 +
    2^J)e^{-4\pi^2/h_J^2} \lesssim \sigma_J^{\beta}.
  \end{aligned}
\end{multline*}

The reason for considering different scale parameters in the construction, is to
deal with fat tail, the heuristic being that in the tail we do not require as
precise an approximation as in the center. In particular small values of $j$
will be used to estimate the function far off in the tails. To formalize this,
we define $\zeta_{j} := 2^{(J - j) (2\beta / p)}$, and
$A_{j} := [-\zeta_{j},\zeta_{j}]$, for all $j=0,\dots J$. We also define
$I_J = [-1,1]$, and for all $j=0,\dots,J - 1$ we set
$I_j := A_j \backslash A_{j+1}$. Notice that by definition of $K_j$, we can
write,
\begin{gather*}
  K_0(\chi_{\sigma_0}*f_0)(x)
  := \sum_{k\in \mathbb Z} u_{0k}\, \varphi((x - h_J\sigma_0 k)/\sigma_0)\\
  K_j(\chi_{\sigma_j}*\Delta_{j-1})(x) := \sum_{k\in \mathbb Z} u_{jk}\,
  \varphi((x - h_J\sigma_j k)/\sigma_j),\quad \forall j\geq 1.
\end{gather*}
To ease notation, we define $\mu_{jk} := h_J \sigma_j k$ for all $j\geq 0$ and
all $k\in \mathbb Z$. In the sequel we shall need the following subset of
indexes,
\begin{align*}
  \Lambda := \Set*{(j,k) \in \Set{0,\dots,J} \times \mathbb Z \given
  |u_{jk}| > \sigma_J^{\beta},\quad
  |\mu_{jk}| \leq \zeta_j + \sqrt{2(\beta + 1)\log \sigma_J^{-1}}}.
\end{align*}

We prove below that we can approximate $f_1$ by a finite mixture corresponding
to retaining only the components associated to indices in $\Lambda$ and that we
can bound the cardinality of $\Lambda $ by $O( J \log J \sigma_J^{-2\beta/p})$

To any $(j,k) \in \Lambda$ we associate
$U_j := \Set{\sigma \given \sigma_j \leq \sigma \leq \sigma_j(1+
  \sigma_J^{\beta}) }$,
$V_{jk} := \Set{\mu \given |\mu - \mu_{jk}| \leq \sigma_j \sigma_J^{\beta} }$
and $W_{jk} := U_j \times V_{jk}$. We denote by $\mathcal{M}$ the set of signed
measures $M$ on $(0,\infty)\times \mathbb R$ such that
$|M(W_{jk}) - u_{jk}| \leq \sigma_J^{\beta}$ for all $(j,k)\in \Lambda$, and
$|M|(W^c) \leq \sigma_J^{\beta}$, where $W^c$ is the relative complement of the
union of all $W_{jk}$ for $(j,k) \in \Lambda$. For any $M\in \mathcal{M}$, we
write $$f_M(x) := \int \varphi((x - \mu)/\sigma)\, dM(\sigma,\mu).$$

In \cref{pro:6} we control the cardinality of $\Lambda$ while in \cref{pro:3} we
control the error between $f_M$ and $f_1$ on the decreasing sequence of
intervals $[-\zeta_j, \zeta_j]$. \Cref{pro:8} provides a crude uniform upper
bound on $f_M $ and $f_0$.

\begin{proposition}
  \label{pro:6}
  There is a constant $C > 0$ depending only on $f_0$ and $Q_0$ such that
  $|\Lambda| \leq C[ \sigma_J^{-(\beta+1)} \wedge (J \log J)
  \sigma_J^{-2\beta/p}]$ if $p\leq 2\beta$, and
  $|\Lambda| \leq C(J \log J) \sigma_J^{-1}$ if $p>2\beta$.
\end{proposition}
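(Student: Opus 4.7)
The plan is to prove the two upper bounds on $|\Lambda|$ separately and then take their minimum. Each bound exploits one of the two constraints defining $\Lambda$: the coefficient constraint $|u_{jk}|>\sigma_J^{\beta}$ and the location constraint $|\mu_{jk}|\leq \zeta_j+\sqrt{2(\beta+1)\log\sigma_J^{-1}}$.

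For the bound $|\Lambda|\lesssim \sigma_J^{-(\beta+1)}$, I would use the coefficient constraint via Markov: at each level $j$,
\[
|\Lambda\cap(\{j\}\times\mathbb Z)|\leq \sigma_J^{-\beta}\sum_{k\in\mathbb Z}|u_{jk}|.
\]
The $\ell^1$-bound on the coefficients at each level $j$ is obtained by adapting \cref{pro:2} (which in the single-scale setting of \cref{sec:location-mixtures-1} yields $\sum_k|u_k|\lesssim \sigma^{-1}$) to the multi-scale construction: substituting $\chi_{\sigma_j}*\Delta_{j-1}$ for $\chi_\sigma* f_0$, writing $u_{jk}=(h_J/\sigma_j)\int \eta((y-\mu_{jk})/\sigma_j)(\chi_{\sigma_j}*\Delta_{j-1})(y)\,dy$, and controlling $\sum_k|\eta((\cdot-\mu_{jk})/\sigma_j)|\lesssim 1/h_J$ via the Fourier-support property of $\eta$, should give $\sum_k|u_{jk}|\lesssim \sigma_j^{-1}=2^j$ with a constant depending only on $f_0$. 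Summing the geometric series over $j=0,\dots,J$ then yields $|\Lambda|\lesssim \sigma_J^{-\beta}\cdot 2^{J+1}\lesssim\sigma_J^{-(\beta+1)}$.

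For the location-based bound, the grid $\mu_{jk}=h_J\sigma_j k$ has spacing $h_J\sigma_j$, and with $1/h_J\asymp\sqrt{J}$ and $\sqrt{\log\sigma_J^{-1}}\asymp\sqrt{J}$, the number of indices surviving at level $j$ is
\[
\leq \frac{2(\zeta_j+\sqrt{2(\beta+1)\log\sigma_J^{-1}})}{h_J\sigma_j}+1\lesssim 2^j\sqrt{J}(\zeta_j+\sqrt{J})+1.
\]
Summing over $j=0,\dots,J$ produces three contributions: $\sqrt{J}\sum_j 2^{j+(J-j)2\beta/p}$, $J\sum_j 2^j\lesssim J\sigma_J^{-1}$, and $J+1$. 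The first is a geometric progression with common ratio $2^{1-2\beta/p}$, which is $<1$, $=1$ or $>1$ depending on whether $p$ is $<2\beta$, $=2\beta$ or $>2\beta$. Case analysis gives $\lesssim\sqrt{J}\sigma_J^{-2\beta/p}$ when $p<2\beta$, $\lesssim J^{3/2}\sigma_J^{-1}$ when $p=2\beta$, and $\lesssim \sqrt{J}\sigma_J^{-1}$ when $p>2\beta$. Absorbing the polynomial-in-$J$ prefactors into a $J\log J$ factor recovers the claimed $(J\log J)\sigma_J^{-2\beta/p}$ for $p\leq 2\beta$ and $(J\log J)\sigma_J^{-1}$ for $p>2\beta$.

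The main obstacle is the $\ell^1$-control $\sum_k|u_{jk}|\lesssim \sigma_j^{-1}$ uniformly in $j$: the recursion $\Delta_j=\Delta_{j-1}-\chi_{\sigma_j}*\Delta_{j-1}$ inflates the $L^1$ norm by a factor of $1+\|\chi\|_1$ at each step, so one cannot simply factor through $\|\Delta_{j-1}\|_1$ via Young's inequality. Instead one must re-run the Fourier/Poisson-summation argument underlying \cref{pro:2} directly at each scale, which is where all the work of the proof concentrates; the remainder is just elementary geometric summation.
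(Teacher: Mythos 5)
Your overall strategy is the same as the paper's: the bound $|\Lambda|\lesssim\sigma_J^{-(\beta+1)}$ comes from Markov's inequality applied to the constraint $|u_{jk}|>\sigma_J^{\beta}$ together with a level-wise $\ell^1$ bound $\sum_k|u_{jk}|\lesssim\sigma_j^{-1}$, and the second bound comes from counting grid points $\mu_{jk}=h_J\sigma_j k$ in the window $|\mu_{jk}|\leq\zeta_j+O(\sqrt J)$ and summing the resulting geometric progressions over $j$ (your counting part matches the paper's \cref{eq:8} essentially line for line, including the unavoidable case analysis on the ratio $2^{1-2\beta/p}$; the $J^{3/2}$ prefactor you get at $p=2\beta$ is not actually absorbed by $J\log J$, but the paper's own proof has the same boundary-case looseness, so I do not hold that against you).

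The genuine gap is in the first bound, and it is precisely the point you flag as ``the main obstacle'' without resolving it: the uniform-in-$j$ control $\sum_k|u_{jk}|\lesssim\sigma_j^{-1}$. Your proposed fix --- re-running the Poisson-summation argument of \cref{pro:2} at each scale --- does not help, because that argument inevitably factors through $\|\chi_{\sigma_j}*\Delta_{j-1}\|_1\leq\|\chi\|_1\|\Delta_{j-1}\|_1$, and (since $\chi$ oscillates, $\|\chi\|_1>1$) the naive recursion gives only $\|\Delta_j\|_1\leq(1+\|\chi\|_1)^j\|f_0\|_1$, which inflates geometrically and destroys the $\sigma_J^{-(\beta+1)}$ bound. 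The missing idea is the telescoping identity proved in \cref{pro:5}: because $\widehat{\chi}_{\sigma}(\xi)=\widehat{\chi}(2\sigma\xi)$ equals $1$ on the support of $\widehat{\chi}_{\sigma'}$ whenever $\sigma\leq\sigma'/2$, the Fourier multipliers satisfy $\widehat{\Delta}_j=\widehat{f}_0\prod_{l\leq j}(1-\widehat{\chi}_{\sigma_l})=\widehat{f}_0(1-\widehat{\chi}_{\sigma_j})$, i.e.\ $\Delta_j=f_0-\chi_{\sigma_j}*f_0$ exactly. This yields $\|\Delta_j\|_1\leq(1+\|\chi\|_1)\|f_0\|_1$ uniformly in $j$, after which a single application of \cref{pro:2} (not a rerun of its proof) gives $\sum_k|u_{jk}|\leq C\|f_0\|_1\sigma_j^{-1}$ and the geometric sum $\sum_{j\leq J}\sigma_j^{-1}\leq 2\sigma_J^{-1}$ closes the argument, exactly as in the paper's \cref{eq:19}. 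Without this identity the first bound, and hence the minimum claimed in the proposition, is not established.
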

\begin{proof}
  First notice that because of \cref{pro:2,pro:5}, we always have the bound
  \begin{equation}
    \label{eq:19}
    4\|f_0\|_1\sigma_J^{-1} \geq 2\|f_0\|_1\sum_{j=0}^J\sigma_j^{-1} \geq
    \sum_{j=0}^J\sum_{k\in \mathbb Z}|u_{jk}| \geq \sum_{(j,k)\in
      \Lambda}|u_{jk}| \geq \sigma_J^{\beta}|\Lambda|.
  \end{equation}
  If $p \leq 2\beta$, we define $B := \sqrt{2(\beta +1)\log 2}$, so that
  $\sqrt{2(\beta + 1)\log\sigma_J^{-1}} = B\sqrt{J}$.  Now consider those
  indexes $j$ with $\zeta_j \leq B\sqrt{J}$. An elementary computation shows
  that there are at most $\lesssim \log J$ such indexes. Therefore, recalling
  that there is a separation of $h_J\sigma_j$ between two consecutive $\mu_{jk}$
  and that there are at most $J$ indexes $j$ with $\zeta_j > B\sqrt{J}$
  \begin{align}
    \notag
    |\Lambda|
    &\lesssim
      \sum_{j=0}^J \frac{4\zeta_j}{h_J\sigma_j} + \log J
      \times\frac{2B\sqrt{J}}{h_J \sigma_J}\\
    \label{eq:8}
    &\leq 4h_J^{-1}\sigma_J^{-2\beta/p}\sum_{j=0}^J2^{-j(\frac{2\beta}{p} - 1)}
      + 2B(\sqrt{J}\log{J}) h_J^{-1}\sigma_J^{-1}.
  \end{align}
  Because $h_J\sqrt{J} \lesssim 1$ by definition, and because $p\leq 2\beta$,
  the result follows from the last equation and \cref{eq:19}. If $p > 2\beta$,
  the reasoning is the same as in the first part, but we can rewrite in this
  situation the \cref{eq:8} as
  \begin{align*}
    |\Lambda| \leq 4h_J^{-1}\sigma_J^{-1}\sum_{j=0}^J2^{(j - J)(1 -
    \frac{2\beta}{p})} + 2B(\sqrt{J}\log J)h_J^{-1}\sigma_J^{-1}.
  \end{align*}
  Since $p > 2\beta$, the conclusion is immediate.
\end{proof}

\begin{proposition}
  \label{pro:8}
  For all $x\in \mathbb R$, all $J > 0$ large enough and all $M\in \mathcal{M}$,
  it holds $|f_M(x) - f_0(x)| \lesssim J^{3/2}$.
\end{proposition}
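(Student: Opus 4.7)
The plan is to mimic the strategy of \cref{pro:9} from the location-mixture setting, but now summed over the $J+1$ scales $j=0,\dots,J$. First I would write $|f_M(x)-f_0(x)| \leq |f_M(x)| + \|f_0\|_\infty$ and note that $\|f_0\|_\infty \leq \|f_0\|_{\mathtt C^\beta}$ is a fixed constant, so the real task reduces to a uniform bound $|f_M(x)|\lesssim J^{3/2}$.

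I would then decompose
\[
f_M(x) = \sum_{(j,k)\in \Lambda}\int_{W_{jk}}\varphi\Big(\frac{x-\mu}{\sigma}\Big)\,dM(\sigma,\mu) + \int_{W^c}\varphi\Big(\frac{x-\mu}{\sigma}\Big)\,dM(\sigma,\mu).
\]
The tail piece is trivially bounded by $\|\varphi\|_\infty |M|(W^c)\leq \sigma_J^\beta$. For the main sum I would exploit the geometry of $W_{jk}=U_j\times V_{jk}$: since $\sigma\in[\sigma_j,2\sigma_j]$ and $|\mu-\mu_{jk}|\leq \sigma_j\sigma_J^\beta\leq \sigma_j$ for $\sigma_J$ small, a routine Gaussian comparison gives $\varphi((x-\mu)/\sigma)\lesssim \varphi(c(x-\mu_{jk})/\sigma_j)$ for some universal $c>0$ as soon as $|x-\mu_{jk}|\geq 2\sigma_j$, while one uses the crude $\varphi\leq 1$ on the $O(1/h_J)$ near indices with $|x-\mu_{jk}|\leq 2\sigma_j$.

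The two quantitative inputs required are the following. First, the coefficient bound $|u_{jk}|\lesssim 1$ uniformly in $(j,k)$: for $j\geq 1$ it follows from \cref{pro:2} combined with Young's inequality applied to $\|\chi_{\sigma_j}*\Delta_{j-1}\|_\infty \leq \|\chi\|_1\|\Delta_{j-1}\|_\infty \lesssim \sigma_{j-1}^\beta\lesssim 1$ via \cref{pro:5}, and for $j=0$ the same reasoning gives $|u_{0k}|\lesssim \|\chi\|_1\|f_0\|_\infty$. Consequently $|M|(W_{jk})\leq |u_{jk}|+\sigma_J^\beta\lesssim 1$. Second, the standard lattice-sum bound $\sum_{k\in \mathbb Z}\varphi(c(x-\mu_{jk})/\sigma_j)\lesssim 1/h_J$ uniformly in $x$, as used in \cref{pro:4}. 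Together with the defining relation $h_J\sqrt{J}\lesssim 1$, these bound the contribution of each scale $j$ by $\lesssim \sqrt{J}$.

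Summing over the $J+1$ scales then yields $|f_M(x)|\lesssim (J+1)\sqrt{J}\lesssim J^{3/2}$, as claimed. There is no genuine obstacle; the only points demanding care are the uniform Gaussian comparison on $W_{jk}$ and the standard split into near and far indices before invoking the lattice-sum bound. Crucially, one must not try to use the finer approximation properties of $\mathcal{M}$ (which involve $\sigma_J^\beta$-type errors) here, since this is only a crude uniform estimate whose purpose in later proofs is to control $|f_M(x)-f_0(x)|$ on unbounded regions where the sharp bound of \cref{pro:3} is unavailable.
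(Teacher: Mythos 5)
Your proof is correct and follows essentially the same route as the paper's: the paper likewise reduces to the argument of \cref{pro:9}, using exactly your three ingredients — the uniform bound $|M|(W_{jk})\lesssim 1$ via \cref{pro:2,pro:5}, the comparison $|x-\mu|/\sigma\geq \tfrac14|x-\mu_{jk}|/\sigma_j$ off the near set, and the cardinality bound $|\mathcal I|\lesssim J h_J^{-1}$ for the near indices — before summing the per-scale contributions $\lesssim h_J^{-1}\asymp\sqrt J$ over the $J+1$ scales.
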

\begin{proof}
  Let
  $\mathcal{I} \equiv \mathcal{I}(x) := \Set{(j,k) \in \Set{0,\dots,J}\times
    \mathbb Z \given |x - \mu_{jk}| \leq 2\sigma_j}$. Then the proof is almost
  identical to \cref{pro:9}. It suffices to notice that
  \begin{itemize}
    \item $|M|(W_{jk}) \leq |M(W_{jk}) - u_{jk}| + |u_{jk}|$ is always bounded
    above by a constant, because of the definition of $\mathcal{M}$, of
    \cref{pro:2,pro:5}.
    \item $|x - \mu| / \sigma \geq (1/4) |x - \mu_{jk}| / \sigma_j$ whenever
    $(\sigma,\mu) \in W_{jk}$ and $(j,k)\in \Lambda \cap \mathcal{I}^c$, as soon
    as $J$ is large enough.
    \item $|\mathcal{I}| \leq 5Jh_J^{-1}$ for $J\geq 1$. \qedhere
  \end{itemize}
\end{proof}

\begin{proposition}
  \label{pro:3}
  If $f_0 \in \mathcal C_\beta$, for all $J > 0$ large enough, all
  $0\leq j \leq J$, all $x\in [-\zeta_j,\zeta_j]$ and all $M\in \mathcal{M}$, it
  holds $|f_M(x) - f_0(x)| \lesssim J^{3/2}\sigma_j^{\beta}$.
\end{proposition}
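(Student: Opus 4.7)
I would parallel the proof of \cref{pro:10} but now over the two-dimensional index set $\Lambda$, summing carefully across scales. By the triangle inequality,
\begin{equation*}
|f_M(x) - f_0(x)| \leq |f_M(x) - f_1(x)| + |f_1(x) - f_0(x)|,
\end{equation*}
and the displayed computation preceding \cref{pro:6} controls the second term by $\sigma_J^\beta \leq \sigma_j^\beta$ uniformly in $x$, so the task reduces to bounding $|f_M(x) - f_1(x)|$ for $x \in [-\zeta_j,\zeta_j]$. Setting $A := \sqrt{2\log|\Lambda| + 2(\beta+1)\log\sigma_J^{-1}}$, \cref{pro:6} yields $A \lesssim \sqrt J$; combined with $h_J \lesssim J^{-1/2}$ this gives $A/h_J \lesssim J$.

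For each level $l$ introduce $\mathcal{J}_l := \Set{k \in \mathbb Z \given |x - \mu_{lk}| \leq 2\sigma_l A}$, so $|\mathcal{J}_l| \lesssim J$, and decompose $f_M(x) - f_1(x)$ into six terms $r_1 + \cdots + r_6$ in analogy with \cref{eq:14}, with outer sums over $(l,k) \in \Lambda$ (respectively $(l,k) \notin \Lambda$) and inner conditions $k \in \mathcal{J}_l$ (respectively $k \notin \mathcal{J}_l$). For $r_1,r_2,r_3,r_4,r_6$, I would repeat the per-term estimates of \cref{pro:10} level by level, combined with the pointwise bound $|u_{lk}| \lesssim \sigma_{l-1}^\beta$ (from \cref{pro:2} and \cref{pro:5}, with the trivial bound $|u_{0k}| \lesssim \|f_0\|_\infty$ at the coarsest scale) and hence $|M|(W_{lk}) \lesssim \sigma_{l-1}^\beta$ on $\Lambda$, together with the rapid decay $\varphi(A) \lesssim \sigma_J^{\beta+1}$ for the tail terms. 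After geometric summation in $l$, using $\sum_l \sigma_l^\beta < \infty$, each of these contributions is bounded by a $J$-polynomial multiple of $\sigma_J^\beta$, hence by $J^{3/2}\sigma_j^\beta$.

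The delicate term is $r_5 = -\sum_{(l,k) \notin \Lambda} u_{lk}\varphi((x - \mu_{lk})/\sigma_l)$. Write $\Lambda^c = \Lambda_1^c \cup \Lambda_2^c$ with $\Lambda_1^c := \Set{(l,k) \given |u_{lk}| \leq \sigma_J^\beta}$. The $\Lambda_1^c$ piece is handled uniformly across scales via $\sum_k \varphi((x - \mu_{lk})/\sigma_l) \lesssim h_J^{-1}$ (cf.\ \cref{pro:4}), yielding $\sigma_J^\beta(J+1)h_J^{-1} \lesssim J^{3/2}\sigma_J^\beta$, which is the origin of the $J^{3/2}$ exponent in the final bound. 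The $\Lambda_2^c$ piece is where the main obstacle sits: for $l \leq j$ the truncation $|\mu_{lk}| > \zeta_l + \sqrt{2(\beta+1)\log\sigma_J^{-1}}$ combined with $|x| \leq \zeta_j \leq \zeta_l$ forces $|x - \mu_{lk}| \geq \sqrt{2(\beta+1)\log\sigma_J^{-1}}$ and the Gaussian becomes $\leq \sigma_J^{\beta+1}$, reproducing the $r_5$ argument of \cref{pro:10}; but for $l > j$ one has $\zeta_l < \zeta_j$, so the tail truncation of $\Lambda$ no longer guarantees $|x - \mu_{lk}|$ is large. I would bypass this by dropping the Gaussian decay and using instead the pointwise bound $|u_{lk}| \lesssim \sigma_{l-1}^\beta$ together with $\sum_k \varphi((x - \mu_{lk})/\sigma_l) \lesssim h_J^{-1}$, which after geometric summation in $l > j$ gives
\begin{equation*}
\sum_{\substack{(l,k) \in \Lambda_2^c \\ l > j}} |u_{lk}|\varphi\Big(\frac{x - \mu_{lk}}{\sigma_l}\Big) \lesssim h_J^{-1}\sum_{l>j}\sigma_{l-1}^\beta \lesssim \sqrt J\, \sigma_j^\beta.
\end{equation*}
Aggregating all the contributions produces $|f_M(x) - f_1(x)| \lesssim J^{3/2}\sigma_j^\beta$, which is the claimed estimate.
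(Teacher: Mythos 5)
Your argument follows essentially the same route as the paper's proof in \cref{sec:prpro3}: the same six-term decomposition relative to a neighbourhood set $\mathcal{J}$ of width $\asymp A\sigma_l$ around $x$, the same split of $\Lambda^c$ into a small-coefficient part and a far-location part, and the same inputs \cref{pro:1,pro:2,pro:4,pro:5,pro:6}. Making the comparison with $f_1$ explicit via the triangle inequality is cleaner than the paper's implicit absorption of the $f_1-f_0$ error into the decomposition. The one place where you genuinely diverge is the $l>j$ portion of $\Lambda_2^c$ inside $r_5$, which you correctly identify as the delicate step: the paper splits it further into indices within distance $\sigma_l\sqrt{2(\beta+1)\log\sigma_J^{-1}}$ of $x$ (few of them, each with $|u_{lk}|\lesssim\sigma_l^{\beta}$) and the remainder (Gaussian decay), whereas you collapse both cases into the single estimate $\sup_k|u_{lk}|\cdot\sum_k\varphi((x-\mu_{lk})/\sigma_l)\lesssim\sigma_{l-1}^{\beta}h_J^{-1}$ followed by geometric summation over $l>j$. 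Both yield $\sqrt{J}\,\sigma_j^{\beta}$; yours is shorter and avoids the auxiliary sets $\mathcal{L}_l$.

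One caveat on bookkeeping: the inference ``each contribution is a $J$-polynomial multiple of $\sigma_J^{\beta}$, hence $\lesssim J^{3/2}\sigma_j^{\beta}$'' is not valid as stated, since for $j$ close to $J$ one has $\sigma_j^\beta\asymp\sigma_J^\beta$ and, e.g., $J^{2}\sigma_J^{\beta}\not\lesssim J^{3/2}\sigma_J^{\beta}$. The term $r_2$ genuinely produces $|\Lambda\cap\mathcal{J}|\,\sigma_J^{\beta}$ with $|\Lambda\cap\mathcal{J}|\lesssim (J+1)\,A h_J^{-1}\lesssim J^{2}$ by your own per-level count $|\mathcal{J}_l|\lesssim J$; the geometric gain is unavailable there because $|M(W_{lk})-u_{lk}|\le\sigma_J^{\beta}$ carries no decay in $l$. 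So the honest output of your argument is $J^{2}\sigma_j^{\beta}$ rather than $J^{3/2}\sigma_j^{\beta}$. The paper's count $|\Lambda\cap\mathcal{J}|\le 2(J+1)A(\beta,J)$ drops the same factor $h_J^{-1}\asymp\sqrt{J}$, so this looseness is shared with the source; it only changes the power of $J$, i.e.\ the power of $\log n$ in the final rates, and does not affect \cref{thm:bigthm}.
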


The proof of \cref{pro:3} is given in \cref{pr:locationscale}.

\section{Proof of \cref{thm:bigthm}}
\label{sec:bigthm}

As mentioned earlier, the proof of \cref{thm:bigthm} boils down to verifying
conditions \eqref{eq:KLcond}, \eqref{eq:sievecond} and \eqref{eq:entropycond}
for the three types of priors.

\subsection{Case of the location mixture}
\label{sec:location}

\subsubsection{Kullback-Leibler condition for location mixtures}
\label{sec:kullb-leibl-cond-2}

In this Section we verify condition \eqref{eq:KLcond} in the case of the
location mixture prior, using the results of \cref{sec:location-mixtures-1}

\par By Chebychev inequality, we have
$Q_0[-\sigma^{-2\beta/p},\sigma^{2\beta/p}]^c \leq
\sigma^{2\beta}Q_0|X|^p$. Then by bringing together results from
\cref{pro:9,pro:10}, we can find a constant $C > 0$ such that for all
$M \in \mathcal{M}_{\sigma}$
\begin{align*}
  \int |f_{M,\sigma}(x) - f_0(x)|^2\, dQ_0(x)
  &\leq
    \sup_{|x| > \sigma^{-2\beta/p}}|f_{M,\sigma}(x) - f_0(x)|^2\,
    Q_0[-\sigma^{2\beta/p},\sigma^{2\beta/p}]^c\\
  &\quad+
    \sup_{|x| \leq \sigma^{-2\beta/p}}|f_{M,\sigma}(x) - f_0(x)|^2\\
  &\leq C\sigma^{2\beta}(\log \sigma^{-1})^2.
\end{align*}

\par By \cref{eq:17}, we have
$G_{\sigma}(U_{\sigma}) \gtrsim
\sigma^{-b_3}\sigma^{b_4\beta}\exp(-a_3/\sigma)$. Moreover, there is a
separation of $h_{\sigma}\sigma$ between two consecutive $\mu_{k}$ and
$h_{\sigma}\sigma \ll \sigma$, thus all the $V_{k}$ with $k \in \Lambda$ are
disjoint. By assumptions on $G_{\mu}$ (see \cref{eq:18}),
$\alpha_k := \overline{\alpha}G_{\mu}(V_k) \gtrsim \sigma^{b_5(\beta +1)}(1+
|\mu_k|)^{-b_6}$ for all $k \in \Lambda$. We also define
$\alpha^c := \alpha(V^c)$. For $\sigma$ small enough, there is a constant
$C' > 0$ not depending on $\sigma$ such that $\alpha^c > C'$. Moreover, since
$\alpha$ has finite variation we can assume without loss of generality that
$C' \leq \alpha^c \leq 1$, otherwise we split $V^c$ into disjoint parts, each of
them having $\alpha$-measure smaller than one. With
$\epsilon_n^2 := C\sigma^{2\beta}(\log \sigma^{-1})^2$, using that
$\Gamma(\alpha) \leq 2\alpha^{\alpha - 1}$ for $\alpha \leq 1$, it follows the
lower bound
\begin{align*}
  \notag
  \Pi(\mathtt{KL}(f_0,\epsilon_n))
  &\geq
    G_{\sigma}(U_{\sigma})\Pi_{\alpha}(\mathcal{M}_{\sigma})
    \gtrsim
    \sigma^{-b_3 + b_4\beta}e^{-a_3\sigma^{-1}}
    \frac{\sigma^{\beta}}{3e\Gamma(\alpha^c)}
    \prod_{k \in \Lambda}\left(
    \frac{\sigma^{\beta}e^{-2|u_k|}}{3e\Gamma(\alpha_k)} \right)\\
  \notag
  &\gtrsim \exp\left\{-K|\Lambda| \log \sigma^{-1} - a_3\sigma^{-1} -
    2\sum_{k\in \Lambda}|u_k| - \sum_{k\in \Lambda}\log \frac{1}{\alpha_k}
    \right\}\\
  &\gtrsim
    \exp\left\{-K |\Lambda|\log \sigma^{-1}
    - K\sigma^{-1}
    - \sum_{k\in \Lambda}\log \frac{1}{\alpha_k} \right\},
\end{align*}
for a generic constant $K > 0$. From the definition of $\alpha_k$, it holds
\begin{equation*}
  \sum_{k\in \Lambda}\log \frac{1}{\alpha_k}
  \lesssim
    |\Lambda|\log\sigma^{-1}
    + \sum_{k\in \Lambda}\log\left(1 +|\mu_k|\right),
\end{equation*}
when $\sigma$ is small enough. Also,
\begin{align*}
  \sum_{k\in \Lambda}\log\left(1 +|\mu_k|\right)
  &= \sum_{k\in\Lambda}\log \left( 1 + |\mu_{k}|\right)
    \Ind\Set{|\mu_{k}| \leq 1}
    + \sum_{k\in \Lambda}\log \left( 1 + |\mu_{k}|\right)
    \Ind\Set{|\mu_{k}| > 1}\\
  &\leq
    |\Set{k\in \Lambda \given |\mu_{k}| \leq 1}|
    + |\Lambda| \log 2
    + \sum_{k\in \Lambda}\log|\mu_{k}|\\
  &\leq 2h_{\sigma}^{-1}\sigma^{-1} + 4|\Lambda|\frac{2\beta}{p}\log \sigma^{-1}
    \lesssim |\Lambda|\log \sigma^{-1}+\sigma^{-1}
\end{align*}
Because $|\Lambda| > \sigma^{-1}$ for $\sigma$ small enough, it follows from all
of the above the existence of a constant $K' > 0$, depending only on $f$,
$\varphi$ and $\Pi$, such that
\begin{align*}
  \Pi(\mathtt{KL}(f_0,\epsilon_n))
  \geq \exp \left\{ - K'|\Lambda| \log \sigma^{-1}\right\}.
\end{align*}
Then for an appropriate constant $C''' > 0$, as a consequence of \cref{pro:13},
we can have $\Pi(\mathtt{KL}(f_0,\epsilon_n)) \geq e^{-c_2n\epsilon_n^2}$ if
\begin{align*}
  \epsilon_n^2 =
  \begin{cases}
    C''' n^{-2\beta/(3\beta + 1)} (\log n)^{2-2\beta/(3\beta+1)} & 0 < p \leq
    2,\\
    C''' n^{-2\beta/(2\beta + 1 + 2\beta/p)} (\log
    n)^{2-3\beta/(2\beta+1+2\beta/p)} & p > 2.
  \end{cases}
\end{align*}

\subsubsection{Sieve construction for location mixtures}
\label{sec:construction-tests-1}

We construct the following sequence of subsets of $\mathcal{F}$, also called
\textit{a sieve}. With the notation
$f_{M,\sigma}(x) := \int \varphi((x-\mu)/\sigma)\, dM(\mu)$,
\begin{gather*}
  \mathcal{F}_n(H,\epsilon) := \Set*{f = f_{M,\sigma} \given
  \begin{array}{l}
    M = \sum_{i=1}^{\infty}u_i\delta_{\mu_i},\quad n^{-1/b_2} < \sigma \leq
    n^{1/b_1}\\
    \sum_{i=1}^{\infty}|u_i| \leq n,\quad \sum_{i=1}^{\infty}|u_i|\Ind\Set{|u_i|
    \leq n^{-1}} \leq \epsilon\\
    |\Set{ i \given |u_i| > n^{-1} }| \leq H n \epsilon^2 /\log n
  \end{array}
  }.
\end{gather*}
The next two lemmas show that $\mathcal{F}_n(H,\epsilon)$ defined as above
satisfies all the condition stated in \cref{eq:sievecond,eq:entropycond} if $H$
and $\delta$ are chosen small enough.

\begin{lemma}
  \label{lem:3}
  Let $\mathbf x = (x_1,\dots,x_n) \in \mathbb R^n$ be arbitrary and $d_n$ be
  the empirical $L^2$-distance associated with $\mathbf x$. Then for any
  $n^{-1/2} < \epsilon_n \leq 1$, $0 < H \leq 1$ and $n$ sufficiently large
  there is a constant $C > 0$ not depending on $n$ such that
  $\log N(\epsilon_n, \mathcal{F}_n(H,\epsilon_n), d_n) \leq C H n\epsilon_n^2$.
\end{lemma}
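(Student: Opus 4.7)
The strategy is to build, for each $f = f_{M,\sigma} \in \mathcal{F}_n(H,\epsilon_n)$, an explicit approximation $\tilde f$ of the same form whose parameters range over a finite, discretized set, and then to count configurations. Throughout, I bound $d_n(f,\tilde f)$ via $d_n(f,\tilde f)^2 \leq \max_j |f(x_j) - \tilde f(x_j)|^2$, which reduces everything to controlling values of $f - \tilde f$ at the $n$ design points. Perturbations of a single atom $u_i \varphi_\sigma(\cdot - \mu_i)$ are controlled by the elementary bounds $\|\varphi_\sigma(\cdot - \mu) - \varphi_\sigma(\cdot - \tilde\mu)\|_\infty \lesssim |\mu-\tilde\mu|/\sigma$ and $\|\varphi_\sigma - \varphi_{\tilde\sigma}\|_\infty \lesssim |\sigma - \tilde\sigma|/\tilde\sigma$, both obtained by differentiating.

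The construction proceeds in five reductions. \emph{Step 1 (drop tiny weights):} keep only the indices $I$ with $|u_i| > n^{-1}$; the discarded atoms produce sup-norm error $\leq \sum_{|u_i| \leq n^{-1}}|u_i| \leq \epsilon_n$ by the third sieve constraint, and $|I| \leq N := Hn\epsilon_n^2/\log n$. \emph{Step 2 (drop distant atoms):} setting $T_\sigma := \sigma\sqrt{2\log(n/\epsilon_n)}$, discard the indices $i \in I$ with $|\mu_i - x_j| > T_\sigma$ for every $j$; at each $x_j$ the resulting error is bounded by $\sum_{i\in I}|u_i| e^{-T_\sigma^2/(2\sigma^2)} \leq n \cdot \epsilon_n/n = \epsilon_n$. \emph{Step 3 (discretize $\sigma$):} use a geometric grid on $[n^{-1/b_2}, n^{1/b_1}]$ with ratio $1 + \epsilon_n/n$, of cardinality $\lesssim n\log n / \epsilon_n$, yielding sup-norm error $\lesssim (\sum_i |u_i|)(\epsilon_n/n) \leq \epsilon_n$. \emph{Step 4 (discretize $\mu_i$):} on each interval $[x_j - T_\sigma, x_j + T_\sigma]$ use an additive grid with spacing $\epsilon_n \sigma / n$; the union has at most $2nT_\sigma / (\epsilon_n \sigma/n) \lesssim n^2 \sqrt{\log n}/\epsilon_n$ points; the resulting sup-norm error per atom is $\lesssim |u_i| \cdot \epsilon_n/n$, summing to $\leq \epsilon_n$. \emph{Step 5 (discretize $u_i$):} on $[-n,n]$, use an additive grid with spacing $\epsilon_n/N$ (at most $2nN/\epsilon_n$ points); the total error is $\leq N \cdot (\epsilon_n/N) = \epsilon_n$.

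Collecting the five contributions, $d_n(f,\tilde f) \lesssim \epsilon_n$, so after rescaling the constants one obtains an $\epsilon_n$-cover. The log-cardinality is then
\begin{equation*}
  \log\big( \#\sigma\text{-grid}\big) + N \cdot \log\big(\#\mu\text{-grid}\big) + N \cdot \log\big(\#u\text{-grid}\big)
  \lesssim \log n + N\log n,
\end{equation*}
which is $\lesssim N\log n = Hn\epsilon_n^2$, using $n\epsilon_n^2 \geq 1$ to absorb the stand-alone $\log n$ term.

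The main obstacle is \emph{Step 4}: the sieve places no bound on $|\mu_i|$, so a naive grid on $\mathbb{R}$ is infinite. The fix is to use the structure of $d_n$, which only sees values of $f$ at $x_1,\dots,x_n$; this both justifies truncating to $\bigcup_j B(x_j,T_\sigma)$ and produces a grid whose size depends on $n$ but whose logarithm is still $O(\log n)$, giving total entropy $O(N\log n)$. Care is needed because the $\mu$-grid depends on the (discretized) value of $\sigma$, but this dependence enters only through the leading constants and is absorbed by taking the union of covers over the $\sigma$-grid, a step already accounted for in the first summand above.
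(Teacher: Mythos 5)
Your construction is correct and follows essentially the same route as the paper's proof (which adapts the net construction of Shen--Tokdar--Ghosal): restrict to the $O(Hn\epsilon_n^2/\log n)$ atoms of non-negligible weight located near the design points, discretize $(\sigma,\mu_i,u_i)$ on grids of polynomial-in-$n$ cardinality, and count. The differences are cosmetic --- you use $\sigma$-adapted truncation radii and grid spacings where the paper uses uniform worst-case ones ($n^{1/b_1}\sqrt{6\log n}$ and $n^{-3/2-1/b_2}$) --- though note that your final absorption of the stand-alone $\log n$ term really requires $n\epsilon_n^2\gtrsim \log n$ rather than just $n\epsilon_n^2\geq 1$, an imprecision the paper's own proof shares and which is harmless for the rates at which the lemma is applied.
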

\begin{proof}
  We write $\mathcal{F}_n \equiv \mathcal{F}_n(H,\epsilon_n)$ to ease notations.
  The proof is based on arguments from \citet{ShenTokdarGhosal2013}, it uses the
  fact that the covering number $N(\epsilon_n,\mathcal{F}_n,d_n)$ is the minimal
  cardinality of an $\epsilon_n$-net over $(\mathcal{F}_n,d_n)$. We recall that
  $(\mathcal{F}_n,d_n)$ has $\epsilon_n$-net $\mathcal F_{n,\epsilon}$, if for
  any $f \in \mathcal{F}_n$ we have $m\in \mathcal F_{n,\epsilon}$ such that
  $d_n(f,m) < \epsilon_n$. Let
  $S_n := \cup_{i=1}^n\Set{x \given |x - x_i| \leq n^{1/b_1}\sqrt{6 \log n}}$,
  $R_n := \Set{\mu \in \mathbb R \given \mu = k / n^{3/2+ 1/b_2},\ k\in \mathbb
    Z,\ \mu \in S_n}$ and,
  \begin{equation*}
    \mathcal F_{n,\epsilon} := \Set*{ f =
      \textstyle\sum_{i\in \mathcal{I}} u_i\,
      \varphi\left( \frac{\cdot - \mu_i}{\sigma} \right) \given
      \begin{array}{l}
        |\mathcal{I}| \leq Hn\epsilon_n^2/\log n,\
        n^{-1/b_2} \leq  \sigma \leq n^{1/b_1}\\
        \forall i \in \mathcal{I}:\, |u_i| \leq n,\ \mu_i \in R_n\\
        u_i = k n^{-3/2}H^{-1},\ k\in \mathbb Z,\\
        \sigma = k / n^{3/2+1/b_2},\ k\in \mathbb N,
      \end{array}
    }.
  \end{equation*}
  We claim that there is a constant $\delta > 0$ such that
  $\mathcal F_{n,\epsilon}$ is a $\delta \epsilon$-net over
  $(\mathcal{F}_n,d_n)$. Indeed, let $f \in \mathcal{F}_n$ be arbitrary, so that
  $f = \sum_{i=1}^{\infty}u_i\, \varphi((\cdot - \mu_i)/\sigma)$. We define
  $\mathcal{J} := \mathbb N \cup \Set{\infty}$,
  $\mathcal{K} := \Set{i \given |u_i| > n^{-1}}$, and
  $\mathcal{L} := \Set{i \given \mu_i \in S_n}$. Now choose
  $\mathcal{I} = \mathcal{J} \cap \mathcal{K} \cap \mathcal{L}$, and notice that
  $|\mathcal{I}| \leq |\mathcal{K}| \leq Hn\epsilon_n^2 / \log n$. Hence we can
  pick a $m \in \mathcal F_{n,\epsilon}$ with
  $m(x) = \sum_{i\in \mathcal{I}}u_i'\, \varphi((x -
  \mu_i')/\sigma')$. Moreover, for any $j=1,\dots,n$
  \begin{multline*}
    |f(x_j) - m(x_j)|
    \leq
    \sum_{\mathcal{J}\cap \mathcal{K} \cap \mathcal{L}^c}|u_i| \varphi((x_j -
    \mu_i)/\sigma) + \sum_{\mathcal{J} \cap \mathcal{K}^c}|u_i| \varphi((x_j -
    \mu_i)/\sigma)\\
    + \sum_{i \in \mathcal{I}}|u_i|| \varphi((x_j - \mu_i)/\sigma) -
    \varphi((x_j - \mu_i')/\sigma')|\\
    + \sum_{i \in \mathcal{I}} |u_i - u_i'|\, \varphi((x_j -
    \mu_i')/\sigma').
  \end{multline*}
  The fourth term in the rhs of the last equation is bounded above by
  $\epsilon_n$. Regarding the third term, for any $i \in \mathcal{L}^c$ we have
  $|x_j - \mu_i|/\sigma > \sqrt{6\log n}$ for all $j=1,\dots,n$. Then the third
  term is bounded by
  $|\mathcal{K}| n \varphi(\sqrt{6 \log n}) \leq H n\epsilon_n^2 n^{-2} / \log n
  \leq \epsilon_{n}$. Since we can always choose $m \in \mathcal F_{n,\epsilon}$
  with $|u_i - u_i'| \leq n^{-3/2}H^{-1}$ for all $i\in \mathcal{I}$,
  $|\mu_i - \mu_i'| \leq n^{-3/2-1/b_2}$ for all $i\in \mathcal{I}$, and
  $|\sigma - \sigma'| \leq n^{-3/2-1/b_2}$, it follows from \cref{pro:1}
  \begin{multline*}
    |f(x_j) - m(x_j)|\\
    \begin{aligned}
      &\leq 2\epsilon_n + \sum_{i \in \mathcal{I}} |u_i - u_i'| + \sum_{i \in
        \mathcal{I}}|u_i|| \varphi((x_j - \mu_i)/\sigma) - \varphi((x_j -
      \mu_i')/\sigma')|\\
      &\leq 2\epsilon_n + \sum_{i\in \mathcal{I}}|u_i - u_i'| + 4\sum_{i\in
        I}|u_i|\frac{|\sigma_i - \sigma_i'|}{\sigma_i \vee \sigma_i'} +
      \sum_{i\in \mathcal{I}} |u_i|\frac{|\mu_i - \mu_i'|}{\sigma_i\vee
        \sigma_i'} \leq 8\epsilon_n,
    \end{aligned}
  \end{multline*}
  for all $j=1,\dots,n$. Therefore $d_n(f,m) \leq 8\epsilon_n$, and the claim is
  proved with $\delta := 8$. To finish the proof, it suffices to compute the
  cardinality of $\mathcal F_{n,\epsilon}$. A straightforward computation shows
  that
  $|R_n| \leq n^{5/2+1/b_1 + 1/b_2} \sqrt{6\log n} \leq n^{4 + 1/b_1 + 1/b_2}$
  for all $n\geq 1$, then
  \begin{align*}
    \log N(c_3\epsilon_n, \mathcal{F}_n, d_n)
    &\leq |\mathcal{I}| \log\left(
      \frac{n}{n^{-3/2}}\times n^{4+1/b_1+1/b_2} \right)
      + \log \left(\frac{n^{1/b_1}}{n^{-3/2-1/b_2}}\right)\\
    &\leq H\left(\frac{11}{2} +\frac{2}{b_1} + \frac{2}{b_2}\right)
      n\epsilon_n^2,
  \end{align*}
  where the last line holds when $n$ becomes large enough. Then the lemma is
  proved with $C := (11/2 + 2/b_1 + 2/b_2)/64$.
\end{proof}

\begin{lemma}
  \label{lem:5}
  Assume that there is $n_0 \in \mathbb N$, and $0 < \gamma_1 \leq \gamma_2 < 1$
  such that $n^{-\gamma_2/2} \leq \epsilon_n \leq n^{-\gamma_1/2}$ for all
  $n \geq n_0$. Then
  $\Pi(\mathcal{F}_n(H,\epsilon_n)^c) \lesssim \exp(- \frac H4(1 - \gamma_2)
  n\epsilon_n^2)$ for all $n \geq n_0$.
\end{lemma}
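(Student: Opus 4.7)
The plan is a union bound over the four constraints defining $\mathcal{F}_n(H,\epsilon_n)^c$:
\begin{align*}
  A_1 &:= \{\sigma\leq n^{-1/b_2}\}\cup\{\sigma > n^{1/b_1}\},\\
  A_2 &:= \{|M|(\mathbb R)>n\},\\
  A_3 &:= \big\{\textstyle\sum_i|u_i|\,\Ind\{|u_i|\leq n^{-1}\} > \epsilon_n\big\},\\
  A_4 &:= \{\#\{i:|u_i|>n^{-1}\} > Hn\epsilon_n^2/\log n\}.
\end{align*}
The event $A_1$ is controlled directly by \cref{eq:15,eq:16}, yielding $\Pi(A_1)\lesssim e^{-a_1 n}+e^{-a_2 n}$; since $n\epsilon_n^2\leq n^{1-\gamma_1}\ll n$, this is much smaller than the target rate. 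For the remaining events I would exploit the Poisson point process representation of $M\sim\Pi_{\overline\alpha G_\mu}$: the pairs $(u_i,\mu_i)$ form a Poisson point process on $(\mathbb R\setminus\{0\})\times\mathbb R$ with intensity $\overline\alpha\,|u|^{-1}e^{-|u|}du\otimes G_\mu(d\mu)$, so Campbell's formula gives
\[
  \mathbb E\bigl[\exp\textstyle\sum_i g(u_i)\bigr]=\exp\!\Bigl(\overline\alpha\!\int(e^{g(u)}-1)|u|^{-1}e^{-|u|}du\Bigr)
\]
whenever the right-hand integral is finite.

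For $A_2$, taking $g(u)=|u|/2$ and using the Frullani identity $\int|u|^{-1}(e^{-|u|/2}-e^{-|u|})du=2\log 2$ gives $\mathbb E[e^{|M|(\mathbb R)/2}]=2^{2\overline\alpha}$, so Markov yields $\Pi(A_2)\lesssim e^{-n/2}$, again negligible. For $A_3$, set $g(u)=\lambda|u|\Ind\{|u|\leq n^{-1}\}$ with $\lambda=\tfrac{H}{4}(1-\gamma_2)n\epsilon_n$; since $\lambda|u|\leq \lambda/n=\tfrac{H(1-\gamma_2)}{4}\epsilon_n\to 0$ on the support, the inequality $e^x-1\leq 2x$ for small $x\geq 0$ bounds the integrand by $2\lambda|u|\cdot|u|^{-1}e^{-|u|}$, so the exponent integrates to at most $4\overline\alpha\lambda/n=\overline\alpha H(1-\gamma_2)\epsilon_n=O(1)$. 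Markov then delivers $\Pi(A_3)\lesssim e^{-\lambda\epsilon_n}=e^{-(H/4)(1-\gamma_2)n\epsilon_n^2}$, matching the target.

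The main obstacle is $A_4$, where the threshold is deflated by a factor $\log n$ compared with the target rate. The count $N:=\#\{i:|u_i|>n^{-1}\}$ is Poisson with mean $\lambda_n=\overline\alpha\int_{|u|>n^{-1}}|u|^{-1}e^{-|u|}du=2\overline\alpha\log n+O(1)$. The standard Chernoff bound $\Pr(\mathrm{Poisson}(\lambda)\geq t)\leq\exp(-t\log(t/(e\lambda)))$, valid for $t\geq e\lambda$, applies with $t=Hn\epsilon_n^2/\log n$ for $n$ large (since $n\epsilon_n^2\geq n^{1-\gamma_2}\to\infty$), and yields
\[
  \log\!\frac{t}{e\lambda_n}=\log(n\epsilon_n^2)-2\log\log n-O(1)\geq (1-\gamma_2)\log n-2\log\log n-O(1),
\]
using $\epsilon_n^2\geq n^{-\gamma_2}$. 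Hence $t\log(t/(e\lambda_n))\geq \tfrac{H}{4}(1-\gamma_2)n\epsilon_n^2$ for all $n\geq n_0$ (enlarging $n_0$ if needed), so $\Pi(A_4)\leq \exp(-\tfrac{H}{4}(1-\gamma_2)n\epsilon_n^2)$. The key bookkeeping is that the $1/\log n$ deflation in the threshold for $N$ is precisely what allows the $\log(t/\lambda_n)\sim(1-\gamma_2)\log n$ factor in the Chernoff exponent to compensate and produce the required rate $n\epsilon_n^2$; summing the four contributions finishes the proof.
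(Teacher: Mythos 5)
Your proof is correct and follows essentially the same route as the paper's: a union bound over the defining constraints, the prior tail conditions \cref{eq:15,eq:16} for the $\sigma$ events, exponential Markov/Chernoff bounds on the total variation and the small-jump sum via the Laplace transform of the symmetric Gamma random measure, and the Poisson Chernoff bound for the number of jumps exceeding $n^{-1}$, with the same bookkeeping showing that the $1/\log n$ deflation of the threshold is compensated by the $\log(t/\lambda_n)\gtrsim(1-\gamma_2)\log n$ factor. The only cosmetic difference is that you invoke Campbell's formula for the Poisson point process of jumps where the paper phrases the same computation through the superposition theorem for the Gamma random measure.
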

\begin{proof}
  We use the fact that $M \sim \Pi_{\alpha}$ is almost surely purely-atomic
  \citep{Kingman1992}). Then from the definition of $\mathcal{F}_n$ it follows
  \begin{multline*}
    \Pi(\mathcal{F}_n^c) \leq G_{\sigma}(\sigma \leq n^{-1/b_2}) +
    G_{\sigma}(\sigma > n^{1/b_1})
    + \Pi_{\alpha}\Big(\textstyle\sum_{i=1}^{\infty}|u_i| > n\Big)\\
    +\Pi_{\alpha}\Big( \textstyle\sum_{i=1}^{\infty}|u_i|\Ind\Set{|u_i|\leq
        n^{-1}} > \epsilon_n \Big) +\\ \Pi_{\alpha}\Big(|\Set{i \given
        |u_i|> n^{-1}}| > Hn\epsilon_n^2/\log n\Big).
  \end{multline*}
  We bound each of the term as follows. By assumption
  $G_{\sigma}(\sigma \leq n^{-1/b_2}) \lesssim e^{-a_2 n}$ and
  $G_{\sigma}(\sigma > n^{1/b_1}) \lesssim e^{-a_1 n}$. Notice that
  $\sum_{i=1}^{\infty}|u_i| = |M|$, where $|M|$ denote the total variation of
  the measure $M$. Since by definition we have $M \overset{d}{=} M_1 - M_2$,
  with $M_1,M_2$ independent Gamma random measures with same base measure
  $\alpha(\cdot)$, it follows that $|Q|$ has the distribution of a Gamma random
  variable with shape parameter $2\overline{\alpha}$. Then by Markov's
  inequality,
  \begin{align*}
    \Pi_{\alpha}\Big(\textstyle\sum_{i=1}^{\infty}|u_i| > n\Big)
    &= \Pi_{\alpha}\left(e^{\frac 12 |M|} > e^{\frac 12 n}\right)
      \leq 2^{2\overline{\alpha}}e^{- \frac 12 n}.
  \end{align*}
  Also, by the superposition theorem \citep[section~2]{Kingman1992}, for any
  $M \sim \Pi_{\alpha}$ we have $M \overset{d}{=} M_3 + M_4$, where $M_3$ and
  $M_4$ are independent random measures with total variation $|M_3|$ and $|M_4|$
  having Laplace transforms (for all $t\in \mathbb R$ for which the integrals in
  the expressions converge)
  \begin{gather*}
    Ee^{t |M_3|} := \exp\left\{ 2\overline{\alpha}\int_{1/n}^{\infty}(e^{tx}
      - 1)x^{-1}e^{-x}\, dx \right\},\\
    Ee^{t|M_4|} := \exp\left\{ 2\overline{\alpha}\int_0^{1/n}(e^{tx} -
      1)x^{-1}e^{-x}\, dx \right\}.
  \end{gather*}
  $M_3$ and $M_4$ are almost-surely purely atomic, $M_3$ has only jumps greater
  than $1/n$ (almost surely) which number is distributed according to a Poisson
  distribution with intensity $2\overline{\alpha}E_1(n^{-1})$, where $E_1$
  denotes the exponential integral $E_1$ function:
  $E_1(x) = \int_x^\infty \frac{ e^{-t}}{ t } dt$. Likewise, $M_4$ has only
  jumps smaller or equal to $1/n$ (almost-surely) which number is almost-surely
  infinite. Recalling that $E_1(x) = \gamma + \log(1/x)+ o(1) $ for $x$ small,
  it holds
  $2\overline{\alpha}\gamma \leq 2\overline{\alpha} E_1(1/n) \leq
  6\overline{\alpha} \log n \leq x_n $ for $n$ sufficiently large, with
  $x_n := Hn\epsilon_n^2/\log n$. Thus using Chernoff's bound on Poisson
  distribution, we get
  \begin{align*}
    \Pi_{\alpha}\Big( |\Set{i \given |u_i|> n^{-1} }| >
    Hn\epsilon_n^2/\log n\Big)
    &\leq
      e^{-2\overline{\alpha}E_1(1/n)}
      \frac{(e2\overline{\alpha}E_1(1/n))^{x_n}}{x_{n}^{x_n}}\\
    &\leq \exp\left\{- \frac 12 x_n \log x_n \right\}.
  \end{align*}
  But, $\log x_n = \log n + \log H - 2\log\epsilon_n^{-1} - \log \log n \geq (1
  - \gamma_2)\log n + \log H - \log \log n \geq \frac 12 (1 - \gamma_2)\log n$
  for large $n$. Therefore, as $n\rightarrow \infty$
  \begin{equation*}
    \Pi_{\alpha}\Big( |\Set{i \given |u_i|> n^{-1}}| >
      Hn\epsilon_n^2/\log n\Big) \leq \exp \left\{ -\frac{H}{4}(1-\gamma_2)
      n\epsilon_n^2 \right\}.
  \end{equation*}
  Finally, we use again Markov's inequality to get
  \begin{multline*}
    \Pi_{\alpha}\Big(
    \textstyle\sum_{i=1}^{\infty}|u_i|\Ind\Set{|u_i|\leq n^{-1}} >
    \epsilon_n\Big)\\
    \begin{aligned}
      &= \Pi_{\alpha}\Big( e^{n\epsilon_n |M_4|} > e^{n\epsilon_n^2}\Big)\\
      &\leq e^{-n\epsilon_n^2}\exp\left\{2\overline{\alpha}\int_0^{1/n}
        (e^{n\epsilon_n x }-1)x^{-1}e^{-x}\,dx \right\}.
    \end{aligned}
  \end{multline*}
  But for $x \in (0,1/n)$, we have
  $e^{n\epsilon_n x} - 1 \leq n(e^{n\epsilon_n\delta_n } - 1) x$, thus the
  integral in the previous expression is bounded by
  $2\overline{\alpha}(e^{\epsilon_n} - 1)$, which is in turn bounded by
  $2\overline{\alpha}(e - 1)$ because $\epsilon_n \leq 1$ if $n\geq n_0$.
\end{proof}

\subsection{Case of the location-scale mixture}
\label{sec:location-scale}

\subsubsection{Kullback-Leibler condition}
\label{sec:kullb-leibl-cond}

\par By Chebychev inequality, we have
$Q_0[-\zeta_j,\zeta_j]^c \leq \zeta_j^{-p}Q_0|X|^p$. Therefore, bringing
together results from \cref{pro:8,pro:3},
\begin{multline*}
  \int |f_M(x) - f_0(x)|^2\, dQ_0(x)\\
  \begin{aligned}
    &= \sum_{j=0}^J \int_{I_j} |f_M(x) - f_0(x)|^2\, dQ_0(x)
    + \int_{A_0^c}|f_M(x) - f(x)|^2\, dQ_0(x)\\
    &\lesssim J^3 \sum_{j=0}^J \sigma_j^{2\beta} Q_0(I_j) + J^3 Q_{0}(A_0^c) .
  \end{aligned}
\end{multline*}
Then we can find a constant $C > 0$ such that
$\int |f_M(x) - f_0(x)|^2\, dQ_0(x) \leq C J^4\sigma_J^{2\beta}$ for all
$M \in \mathcal{M}$ and $J$ large enough.

\par By \cref{eq:17}, we have
$G_{\sigma}(U_j) \gtrsim
\sigma_j^{-b_3}\sigma^{b_4\beta}_J\exp(-a_3/\sigma_j)$ for all $j=0,\dots
J$. Moreover, there is a separation of $h_J\sigma_j$ between two consecutive
$\mu_{jk}$ and $h_J\sigma_j \ll \sigma_j$, thus all the $W_{jk}$ with
$(j,k) \in \Lambda$ are disjoint. By \cref{eq:18}, we have
$\alpha_{jk} := \overline{\alpha}G_{\sigma}(U_{j})G_{\mu}(V_{jk}) \gtrsim
\sigma_j^{b_5(\beta +1) + b_4\beta}\exp(-a_3/\sigma_j) (1+
|\mu_{jk}|)^{-b_6}$ for all $(j,k) \in \Lambda$. We also define
$\alpha^c := \alpha(W^c)$. For $J$ large enough, there is a constant $C' > 0$
not depending on $J$ such that $\alpha^c > C'$. Moreover, since $\alpha$ has
finite variation we can assume without loss of generality that
$C' \leq \alpha^c \leq 1$, otherwise we split $W^c$ into disjoint parts, each of
them having $\alpha$-measure smaller than one. With
$\epsilon_n^2 := CJ^4\sigma_J^{2\beta}$, using that
$\Gamma(\alpha) \leq 2\alpha^{\alpha - 1}$ for $\alpha \leq 1$ and $\mathcal{M}
\subset \mathtt KL(f_0,\epsilon_n)$, it follows the
lower bound
\begin{multline}
  \label{eq:9}
  \Pi(\mathtt{KL}(f_0,\epsilon_n)) \geq \frac{\sigma_J^{\beta}}{3e
    \Gamma(\alpha^c)} \prod_{(j,k) \in \Lambda}\left(
    \frac{\sigma_J^{\beta}e^{-2|u_{jk}|}}{3e\Gamma(\alpha_{jk})} \right)\\
  \begin{aligned}
    &\geq \frac{\sigma_J^{\beta}}{3e \Gamma(\alpha^c)} \prod_{(j,k) \in \Lambda}
    \exp \left\{ -2|u_{jk}| - \beta \log\sigma_J^{-1} + \log\frac{1}{6e} +
      (\alpha_{jk} -
      1)\log \alpha_{jk} \right\}\\
    &\geq \exp \left\{ - K J |\Lambda| -2
      \textstyle\sum_{(j,k)\in\Lambda}|u_{jk}| -
      \sum_{(j,k)\in \Lambda}\log \alpha_{jk}^{-1} \right\},
  \end{aligned}
\end{multline}
for a constant $K > 0$ depending only on $C$ and $\beta$. We now evaluate the
sums involved in the rhs of \cref{eq:9}. As before, be have that
$\sum_{(j,k)\in \Lambda}|u_{jk}| \leq 4\|f_0\|_1\sigma_J^{-1}$ (see for instance
the proof of \cref{pro:3}). Act as in \cref{sec:kullb-leibl-cond-2} to find that
\begin{align*}
  \sum_{(j,k)\in \Lambda} \log\alpha_{jk}^{-1}
  &\lesssim J|\Lambda| + J^{3/2}\sigma_J^{-1} + |\Lambda|\sigma_J^{-1}.
\end{align*}
The term proportional to $|\Lambda|\sigma_J^{-1}$ is entirely responsible for
the bad rates in location-scale mixtures, and the aim of the hybridation of next
section is to get rid of it. For a constant $K' > 0$,
\begin{align*}
  \Pi(\mathtt{KL}(f_0,\epsilon_n))
  &\geq \exp\left\{ -K'|\Lambda|\sigma_J^{-{1}} \right\}.
\end{align*}
Then for an appropriate constant $C' > 0$ we can have
$\Pi(\mathtt{KL}(f_0,\epsilon_n)) \geq e^{-c_2n\epsilon_n^2}$ if
\begin{equation*}
  \epsilon_n^2 =
  \begin{cases}
    C' [n^{-2\beta/(3\beta + 2)}(\log n)^{t_1} \wedge
    n^{-2\beta/(2\beta+1 + 2\beta/p)}(\log n)^{t_2} ],&\quad p\leq 2\beta,\\
    C' n^{-\beta/(\beta+1)}(\log n)^{t_3},&\quad p > 2\beta,
  \end{cases}
\end{equation*}
where $t_1 := 4-8\beta/(3\beta + 2)$ ,
$t_2:= 4-4\beta / (2\beta + 1 + 2\beta/p)$ and $t_3 := 4-2\beta/(\beta+1)$.

\subsubsection{Sieve construction}
\label{sec:construction-tests-2}

Using the notation $f_M(x) := \int \varphi((x-\mu)/\sigma)\,dM(\sigma,\mu)$, we
construct the following sieve.
\begin{equation}
  \label{eq:10}
  \mathcal{F}_n(H,\epsilon) :=
  \resizebox{0.75\textwidth}{!}{
    $\Set*{f = f_M
    \begin{array}{l}
      M = \sum_{i=1}^{\infty}u_i\delta_{\sigma_i,\mu_i},\quad
      \sum_{i=1}^{\infty}|u_i| \leq n,\\
      |\Set{ i \given |u_i| > n^{-1},\ n^{-1/b_2} < \sigma_i \leq n^{1/b_1} }|
      \leq H n \epsilon^2 /\log n,\\
      \sum_{i=1}^{\infty}|u_i|\Ind\Set{|u_i| \leq n^{-1}} \leq \epsilon,\\
      \sum_{i=1}^{\infty}|u_i|\Ind\Set{ \sigma_i \leq n^{-1/b_2}} \leq \epsilon,
      \ \sum_{i=1}^{\infty}|u_i|\Ind\Set{ \sigma_i > n^{1/b_1}} \leq \epsilon
    \end{array}
  }.$}
\end{equation}

\begin{lemma}
  \label{lem:6}
  Let $x = (x_1,\dots,x_n) \in \mathbb R^n$ be arbitrary and $d_n$ be the
  empirical $L^2$-distance associated with $x$. Then for any
  $n^{-1/2} < \epsilon_n \leq 1$, $0 < H \leq 1$ and $n$ sufficiently large
  there is a constant $C > 0$ not depending on $n$ such that
  $\log N(\epsilon_n, \mathcal{F}_n(H,\epsilon_n), d_n) \leq C H n\epsilon_n^2$.
\end{lemma}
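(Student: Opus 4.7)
The plan is to follow closely the strategy of \cref{lem:3}, constructing a discrete $\epsilon_n$-net $\mathcal{F}_{n,\epsilon}$ for $(\mathcal{F}_n(H,\epsilon_n), d_n)$ and bounding its cardinality. The main new ingredient compared to the location case is that each atom now carries its own scale parameter $\sigma_i$, so the support of $\sigma_i$ must be discretized as well; the sieve definition in \cref{eq:10} has been designed precisely to let us discard the atoms with $\sigma_i$ outside $[n^{-1/b_2}, n^{1/b_1}]$ as a small total-variation perturbation, so we only need to discretize the scale on this ``good'' range.

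Concretely, I would take $S_n := \cup_{i=1}^n\{x : |x-x_i| \leq n^{1/b_1}\sqrt{6\log n}\}$ and a grid $R_n$ on $S_n$ of spacing $n^{-3/2-1/b_2}$, and define $\mathcal{F}_{n,\epsilon}$ as the set of finite sums $\sum_{i\in \mathcal{I}} u_i\varphi((\cdot - \mu_i)/\sigma_i)$ where $|\mathcal{I}| \leq Hn\epsilon_n^2/\log n$, each $\mu_i \in R_n$, each $\sigma_i$ lies on a grid of spacing $n^{-3/2-1/b_2}$ inside $[n^{-1/b_2}, n^{1/b_1}]$, and each $u_i$ lies on a grid of spacing $n^{-3/2}H^{-1}$ inside $[-n,n]$. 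Note that, in contrast to \cref{lem:3}, the scale is now atom-dependent and is also discretized.

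Given an arbitrary $f = \sum_i u_i \varphi((\cdot - \mu_i)/\sigma_i) \in \mathcal{F}_n(H,\epsilon_n)$, I would retain only the indices $\mathcal{I}$ with $|u_i| > n^{-1}$, $\sigma_i \in (n^{-1/b_2}, n^{1/b_1}]$, and $\mu_i \in S_n$, and replace the retained parameters by their nearest grid points to obtain $m \in \mathcal{F}_{n,\epsilon}$. The sieve constraints then control the discarded contributions in supremum norm at each $x_j$: atoms with $|u_i| \leq n^{-1}$ contribute at most $\epsilon_n$ by the third line of \cref{eq:10}, atoms with $\sigma_i \notin (n^{-1/b_2}, n^{1/b_1}]$ contribute at most $2\|\varphi\|_\infty \epsilon_n$ by the last line of \cref{eq:10}, and atoms with $\mu_i \notin S_n$ (but $|u_i| > n^{-1}$ and $\sigma_i \leq n^{1/b_1}$) satisfy $|x_j - \mu_i|/\sigma_i > \sqrt{6\log n}$ so their total contribution is bounded by $|\{i : |u_i|>n^{-1}\}|\cdot n \cdot \varphi(\sqrt{6\log n}) \leq \epsilon_n$. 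The rounding error on $u_i,\mu_i,\sigma_i$ is handled exactly as in \cref{lem:3} via \cref{pro:1}, which gives the Lipschitz control $|\varphi((x-\mu)/\sigma) - \varphi((x-\mu')/\sigma')| \lesssim |\mu-\mu'|/(\sigma \vee \sigma') + |\sigma-\sigma'|/(\sigma \vee \sigma')$; since $\sigma_i \vee \sigma_i' \geq n^{-1/b_2}$ on the good set and the grid spacing is $n^{-3/2-1/b_2}$, this error is $O(\epsilon_n)$ at each $x_j$. Combining these five pieces yields $d_n(f,m) \lesssim \epsilon_n$.

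Finally, counting $|\mathcal{F}_{n,\epsilon}|$: the number of choices of $\mathcal{I}$ and of the discretized $(u_i,\mu_i,\sigma_i)$ for $i \in \mathcal{I}$ is at most $|\mathcal{I}| \log(n^{5/2} \cdot n^{4+1/b_1+1/b_2} \cdot n^{5/2+1/b_1+1/b_2})$, which is $\leq CHn\epsilon_n^2$ for a constant depending only on $b_1,b_2$. The main (minor) obstacle is simply bookkeeping for the extra per-atom $\sigma_i$ discretization; the combinatorial blow-up remains controlled by $|\mathcal{I}|$ since adding one more coordinate per atom only multiplies the grid size by a polynomial in $n$, which costs an additive $O(\log n)$ per atom and is absorbed by the factor $1/\log n$ in the bound on $|\mathcal{I}|$.
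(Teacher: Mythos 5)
Your proposal is correct and is exactly the adaptation the paper intends: the paper's own treatment of \cref{lem:6} consists of the single remark that the proof is ``almost identical to \cref{lem:3}'', and your argument spells out precisely that adaptation (per-atom discretization of $\sigma_i$ on $[n^{-1/b_2},n^{1/b_1}]$, discarding small weights, out-of-range scales and far-away locations via the extra total-variation constraints in \cref{eq:10}, rounding control via \cref{pro:1}, and absorbing the extra $O(\log n)$ per atom from the scale grid into the $|\mathcal{I}|\leq Hn\epsilon_n^2/\log n$ bound). No gaps worth noting.
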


The proof is almost identical to \cref{lem:3}, with the same constant $C > 0$.

\begin{lemma}
  \label{lem:7}
  Assume that there is $n_0 \in \mathbb N$, and $0 < \gamma_1 \leq \gamma_2 < 1$
  such that $n^{-\gamma_2/2} \leq \epsilon_n \leq n^{-\gamma_1/2}$ for all
  $n \geq n_0$. Then
  $\Pi(\mathcal{F}_n(H,\epsilon_n)^c) \lesssim \exp(- \frac H4(1 - \gamma_2)
  n\epsilon_n^2)$ for all $n \geq n_0$.
\end{lemma}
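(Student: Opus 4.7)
The plan is to follow the pattern of \cref{lem:5}, decomposing the event $\{f \notin \mathcal F_n(H,\epsilon_n)\}$ into the union of the five events corresponding to the five constraints appearing in the definition \eqref{eq:10} of the sieve, and bounding each contribution separately. The crucial observation is that three of these five terms reduce, up to trivial rewriting, to terms already handled in \cref{lem:5}. Since the base measure $\alpha = \overline\alpha\, G_\sigma\times G_\mu$ on $(0,\infty)\times \mathbb R$ is still a finite positive measure of total mass $\overline\alpha$, the total variation $|M|((0,\infty)\times \mathbb R)$ is once more Gamma-distributed with shape $2\overline\alpha$, so $\Pi_\alpha(\sum_i|u_i| > n)$ is controlled exactly as before. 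Likewise, the small-jump bound $\Pi_\alpha(\sum_i |u_i|\Ind\Set{|u_i|\leq n^{-1}} > \epsilon_n)$ and the large-jump count bound are controlled via the same Laplace-transform and Chernoff-on-Poisson arguments as in \cref{lem:5}: the additional restriction $n^{-1/b_2} < \sigma_i \leq n^{1/b_1}$ in the count constraint only reduces the Poisson intensity, so that bound continues to hold.

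The genuinely new terms are $\Pi_\alpha(\sum_i|u_i|\Ind\Set{\sigma_i\leq n^{-1/b_2}} > \epsilon_n)$ and $\Pi_\alpha(\sum_i|u_i|\Ind\Set{\sigma_i > n^{1/b_1}} > \epsilon_n)$. Writing $A_- := (0,n^{-1/b_2}]\times \mathbb R$ and $A_+ := (n^{1/b_1},\infty)\times \mathbb R$, these sums coincide with the total variations $|M|(A_\pm)$. Using the decomposition $M\overset{d}{=}M_1 - M_2$ with $M_1,M_2$ independent Gamma random measures with base measure $\alpha$, one has $|M|(A_\pm) = M_1(A_\pm) + M_2(A_\pm)$, a Gamma random variable with shape $2\overline\alpha\, G_\sigma(A_{\pm,\sigma})$, where $A_{\pm,\sigma}$ is the $\sigma$-projection of $A_\pm$. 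The tail conditions \cref{eq:15,eq:16} imply $G_\sigma(A_{\pm,\sigma})\lesssim e^{-c n}$ for some $c > 0$, so a direct application of Markov's inequality gives
\begin{equation*}
\Pi_\alpha(|M|(A_\pm) > \epsilon_n) \leq \frac{E|M|(A_\pm)}{\epsilon_n} = \frac{2\overline\alpha\, G_\sigma(A_{\pm,\sigma})}{\epsilon_n} \lesssim \frac{e^{-cn}}{\epsilon_n}.
\end{equation*}

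Combining the five bounds then produces $\Pi(\mathcal F_n(H,\epsilon_n)^c)\lesssim \exp(-\tfrac{H}{4}(1-\gamma_2)n\epsilon_n^2)$. The rate is set by the Chernoff bound on the number of large jumps, exactly as in \cref{lem:5}; the two new $\sigma$-tail terms are stronger than needed because $n\epsilon_n^2 \leq n^{1-\gamma_1}$ grows strictly slower than the linear exponent $cn$ controlling $G_\sigma(A_{\pm,\sigma})$. Consequently I do not anticipate any substantive obstacle: the additional work beyond \cref{lem:5} amounts to two elementary Markov bounds exploiting in an essentially trivial way the exponential decay of $G_\sigma$ near $0$ and $\infty$, which is precisely what the assumptions \cref{eq:15,eq:16} provide.
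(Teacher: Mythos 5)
Your proposal is correct and follows essentially the same route as the paper: the same five-way decomposition, the same reduction of three terms to \cref{lem:5}, and the same identification of $\sum_i|u_i|\Ind\{\sigma_i>n^{1/b_1}\}$ (resp.\ $\sigma_i\leq n^{-1/b_2}$) as a Gamma random variable with shape $2\overline\alpha\,G_\sigma(A_{\pm,\sigma})\lesssim e^{-cn}$. The only cosmetic difference is that you bound its tail by a first-moment Markov inequality where the paper centers and uses Chebyshev, and both yield a bound that is negligible against $\exp(-\tfrac H4(1-\gamma_2)n\epsilon_n^2)$.
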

\begin{proof}
  We first write the estimate
  \begin{multline*}
    \Pi(\mathcal{F}_n^c) \leq \Pi_{\alpha}\Big(
    \textstyle\sum_{i=1}^{\infty}|u_i| > n\Big) +\Pi_{\alpha}\Big(
    \textstyle\sum_{i=1}^{\infty}|u_i| \Ind\Set{|u_i|\leq \delta} >
    \epsilon_n\Big)\\
    + \Pi_{\alpha}\Big(\textstyle\sum_{i=1}^{\infty}|u_i| \Ind\Set{ \sigma_i
      \leq n^{-1/b_2}} > \epsilon_n\Big) +
    \Pi_{\alpha}\Big(\textstyle\sum_{i=1}^{\infty}|u_i| \Ind\Set{ \sigma_i >
      n^{1/b_1}} > \epsilon_n\Big)\\
    +\Pi_{\alpha}\Big(|\Set{i \given |u_i|>\delta,\ n^{-1/b_2}< \sigma_i \leq
      n^{1/b_1}}| > Hn\epsilon_n^2/\log n\Big).
  \end{multline*}
  The first three terms in the rhs above obeys the same bounds as in the proof
  of \cref{lem:5}, using the same arguments. The last two term are bounded using
  the same trick, thus we simply bound the last term and left the other to the
  reader. Notice that the random variable
  $U := \sum_{i=1}^{\infty}|u_i| \Ind\Set{\sigma_i > n^{1/b_1}}$ has Gamma
  distribution with parameters $2\alpha(A_n),1$, with
  $A_n := \Set{(\sigma,\mu) \given \sigma > n^{1/b_1}}$. For $n$ large, by
  assumptions on $P_{\sigma}$, it holds $\alpha(A_n) \ll \epsilon_n$. Then by
  Chebychev inequality, for $n$ large enough
  \begin{align}
    \notag
    \Pi_{\alpha}\Big(\textstyle\sum_{i=1}^{\infty}|u_i| \Ind\Set{ \sigma_i >
    n^{1/b_1}} > \epsilon_n\Big)
    &\leq \Pr( U - EU > \epsilon_n - EU)\\
    \label{eq:11}
    &\leq \Pr(U -EU > \epsilon_n/2) \leq 16 \epsilon_n^{-2}\alpha(A_n)^2.
  \end{align}
  The conclusion follows from the assumptions on $G_{\sigma}$ which imply
  $\alpha(A_n) = \overline{\alpha} G_{\sigma}(\sigma > n^{1/b_1}) \lesssim
  \exp(-a_1 n)$.
\end{proof}

\subsection{Hybrid location-scale mixtures}
\label{sec:hybr-locat-scale-1}

Obviously, given the definition of hybrid mixtures (see
\cref{sec:hybr-locat-scale-1}), most of the proof is redundant with the
location-scale case, and in the sequel we deal only with the parts that differ.

\subsubsection{Kullback-leibler condition}
\label{sec:kullb-leibl-cond-3}

Let $\mathcal{M} \equiv \mathcal{M}(\beta,J,f,\Lambda)$ be the set of signed
measures constructed in \cref{sec:locat-scale-mixt-1}. For any integer $J > 0$
let $\Omega_J$ be the event
\begin{equation*}
  \Omega_J := \Set*{P_{\sigma} \given
    P_{\sigma}[2^{-j},2^{-j}(1+2^{-J\beta})] \geq
    2^{-J} \quad \forall \ 0 \leq j \leq J}.
\end{equation*}
Then with arguments and constant $C >0$ from \cref{sec:kullb-leibl-cond},
letting $\epsilon_n^2 := C J^4 \sigma_J^{2\beta}$, we have
\begin{equation*}
  \Pi(\mathtt{KL}(f_0,\epsilon_n)) \geq \Pi(\mathcal{M}) \geq \Pi(\mathcal{M} \mid
  \Omega_J) \Pi_{\sigma}(\Omega_J).
\end{equation*}
But by \cref{eq:6} we have
$\Pi_{\sigma}(\Omega_J) \gtrsim\exp(-a_6J^{b_7} 2^J)$ and on $\Omega_J$ it
holds
$\alpha(W_{jk}) = \overline{\alpha} P_{\sigma}(U_j)G_{\mu}(V_{jk}) \geq
\overline{\alpha}2^{-J}G_{\mu}(V_{jk})$ for all $(j,k) \in \Lambda$. Then act as
in \cref{eq:9} to find a constant $K > 0$ such that (recalling that
$\sigma_J = 2^{-J}$)
\begin{align*}
  \Pi(\mathtt{KL}(f_0,\epsilon_n))
  &\gtrsim \exp\left\{-K(J^{b_7} \vee J^{1/2})\sigma_J^{-1} - K J |\Lambda|
    \right\}.
\end{align*}
Because of \cref{pro:6} we can have
$\Pi(\mathtt{KL}(f_0,\epsilon_n)) \geq e^{-c_2n\epsilon_n^2}$ if for an
appropriate constant $C' > 0$
\begin{equation*}
  \epsilon_n^2 =
  \begin{cases}
    C'[n^{-2\beta/(3\beta+1)}(\log n)^{4-6\beta/(3\beta+1)} \wedge
    n^{-p/(p+1)}(\log n)^{4-p/(p+1)}]& p\leq 2\beta,\\
    C'n^{-2\beta/(2\beta + 1)}(\log n)^{4- 2\beta(4-b_7\vee
      3)/(2\beta+1)} & p > 2\beta.
  \end{cases}
\end{equation*}

\subsubsection{Sieve construction}
\label{sec:construction-tests-3}

We use the same sieve $\mathcal{F}_n(H,\epsilon)$ as in \cref{eq:10}. The
definition of $\mathcal{F}_n(j,\epsilon)$ is independent of $\Pi$ thus the
conclusion of \cref{lem:3} holds for hybrid location-scale mixtures. It remains
to show that $\Pi(\mathcal{F}_n(H,\epsilon)^c) \leq \exp(-2c_2n\epsilon_n^2)$,
which is the object of the next lemma.

\begin{lemma}
  \label{lem:9}
  Assume that there is $n_0 \in \mathbb N$, and $0 < \gamma_1 \leq \gamma_2 < 1$
  such that $n^{-\gamma_2/2} \leq \epsilon_n \leq n^{-\gamma_1/2}$ for all
  $n \geq n_0$. Then there is a constant a constant $\gamma_2 < \gamma < 1$ such
  that
  $\Pi(\mathcal{F}_n(H,\epsilon_n)^c) \lesssim \exp(- \frac H4(1 - \gamma)
  n\epsilon_n^2)$ for all $n \geq n_0$.
\end{lemma}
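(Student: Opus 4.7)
The proof will mirror that of \cref{lem:7}, with one structural change: since $\alpha = \overline{\alpha} P_{\sigma}\times G_{\mu}$ where $P_{\sigma}$ is now random, the bounds on the tail masses $\alpha(\{\sigma>n^{1/b_1}\})$ and $\alpha(\{\sigma\leq n^{-1/b_2}\})$ that drove \cref{eq:11} become random. The plan is therefore to condition on $P_{\sigma}$, apply the fixed-base-measure arguments of \cref{lem:7} on the event that $P_{\sigma}$ has sufficiently light tails, and use \cref{eq:4,eq:5} to control the complementary event.

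More precisely, I introduce the two "good" events
\begin{equation*}
\Omega_1 := \Set*{P_{\sigma} \given P_{\sigma}(\sigma > n^{1/b_1}) \leq \exp(-a_1 n/2)},\quad
\Omega_2 := \Set*{P_{\sigma} \given P_{\sigma}(\sigma \leq n^{-1/b_2}) \leq \exp(-a_2 n/2)}.
\end{equation*}
By \cref{eq:4,eq:5} applied with $x=n^{1/b_1}$ and $x=n^{1/b_2}$ respectively, the complementary events have $\Pi_{\sigma}$-probability at most $\exp(-a_4 n)$ and $\exp(-a_5 n)$, both negligible compared to $\exp(-\tfrac{H}{4}(1-\gamma)n\epsilon_n^2)$ provided $\gamma_2<\gamma<1$ is chosen close enough to $\gamma_2$ (this is the mild loss relative to \cref{lem:7}).

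Next, conditionally on $P_{\sigma}\in\Omega_1\cap\Omega_2$, the signed measure $M$ is a symmetric Gamma process with a \emph{deterministic} base measure $\overline{\alpha}P_{\sigma}\times G_{\mu}$, so the five-term decomposition of $\Pi(\mathcal{F}_n^c)$ used in \cref{lem:7} applies verbatim. The first three terms (total variation bound on $\sum|u_i|$, small-jump mass $\sum|u_i|\Ind\{|u_i|\leq n^{-1}\}$, and the Poisson count of jumps above $n^{-1}$ restricted to $n^{-1/b_2}<\sigma_i\leq n^{1/b_1}$) involve only the total mass $\overline{\alpha}$ of $P_{\sigma}$, hence are bounded uniformly in $P_{\sigma}$ exactly as in \cref{lem:5,lem:7}. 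The two tail-in-$\sigma$ terms are handled by the Chebyshev/Gamma argument from \cref{eq:11}, but now $\alpha(A_n)=\overline{\alpha}P_{\sigma}(\sigma>n^{1/b_1})$ on $\Omega_1$ satisfies $\alpha(A_n)\leq\overline{\alpha}\exp(-a_1 n/2)\ll\epsilon_n$, and symmetrically on $\Omega_2$ for the small-$\sigma$ tail; so the same $16\epsilon_n^{-2}\alpha(A_n)^2$ bound produces exponentially small terms.

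The main obstacle, and the reason for allowing $\gamma>\gamma_2$, is to ensure that the contribution $\Pi_{\sigma}(\Omega_1^c)+\Pi_{\sigma}(\Omega_2^c)\lesssim \exp(-a_4 n)+\exp(-a_5 n)$ of the "bad" $P_{\sigma}$'s does not dominate $\exp(-\tfrac H4(1-\gamma)n\epsilon_n^2)$. Since $n\epsilon_n^2\leq n^{1-\gamma_1}$ and we only need $(1-\gamma)H/4$ to stay below $\min(a_4,a_5)n^{\gamma_1}$, any $\gamma\in(\gamma_2,1)$ works for $n\geq n_0$, giving the claim. Putting the three contributions together,
\begin{equation*}
\Pi(\mathcal{F}_n(H,\epsilon_n)^c) \leq \Pi_{\sigma}(\Omega_1^c)+\Pi_{\sigma}(\Omega_2^c)+\sup_{P_{\sigma}\in\Omega_1\cap\Omega_2}\Pi\bigl(\mathcal{F}_n(H,\epsilon_n)^c\mid P_{\sigma}\bigr) \lesssim \exp\bigl(-\tfrac H4(1-\gamma)n\epsilon_n^2\bigr),
\end{equation*}
as required.
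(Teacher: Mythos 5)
Your proposal is correct and follows essentially the same route as the paper: reduce to the two tail-in-$\sigma$ terms (the others depending only on $\overline{\alpha}$), condition on $P_{\sigma}$, apply the Chebyshev/Gamma bound of \cref{eq:11} on the good events where $P_{\sigma}$ has light tails, and control the bad events via \cref{eq:4,eq:5}, noting that $\exp(-a_4 n)$ and $\exp(-a_5 n)$ are negligible against $\exp(-\tfrac H4(1-\gamma)n\epsilon_n^2)$ since $n\epsilon_n^2 \leq n^{1-\gamma_1}$.
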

\begin{proof}
  We proceed as in the proof of \cref{lem:7}. Following the same steps, we
  deduce that it is sufficient to prove that
  \begin{gather*}
    \Pi_{\alpha}\Big(
      \textstyle\sum_{i=1}^{\infty}|u_i|\Ind\Set{\sigma_i > n^{1/b_1}} >
      \epsilon_n\Big) \lesssim e^{- 2c_2n},\\
    \Pi_{\alpha}\Big(
      \textstyle\sum_{i=1}^{\infty}|u_i|\Ind\Set{\sigma_i \leq n^{-1/b_2}} >
      \epsilon_n\Big) \lesssim e^{- 2c_2 n}.
  \end{gather*}
  Since the proofs are almost identical for the two previous conditions, we only
  prove the first and left the second to the reader. Notice that by \cref{eq:11}
  we have
  \begin{equation*}
    \Pi_{\alpha}\Big(
    \textstyle\sum_{i=1}^{\infty}|u_i|\Ind\Set{\sigma_i > n^{1/b_1}} >
    \epsilon_n \ \Big|\ P_{\sigma}\Big) \leq 16 \overline{\alpha}
    \epsilon_n^{-2}P_{\sigma}(\sigma > n^{1/b_1})^2.
  \end{equation*}
  Letting
  $\Omega := \Set{P_{\sigma} \given P_{\sigma}(\sigma > n^{1/b_1}) < \exp(-a_1 n
    /2)}$, with a slight abuse of notation, it follows from \cref{eq:4}
  \begin{multline*}
    \Pi_{\alpha}\Big( \textstyle\sum_{i=1}^{\infty}|u_i|\Ind\Set{\sigma_i >
      n^{1/b_1}} > \epsilon_n\Big)\\
    \begin{aligned}
      &\leq \Pi_{\alpha}\Big(
      \textstyle\sum_{i=1}^{\infty}|u_i|\Ind\Set{\sigma_i > n^{1/b_1}} >
      \epsilon_n\ \Big|\ \Omega\Big)
      + \Pi_{\sigma}(\Omega^c)\\
      &\lesssim \epsilon_n^{-2}\exp(-a_1n) + \exp(-a_4 n).&\qedhere
    \end{aligned}
  \end{multline*}
\end{proof}

\section{Proof of \cref{thm:1}}

The proof follows the same lines as \citet{GhosalVanDerVaartothers2007} with
additional cares. The first step consists on rewriting expectation of the
posterior distribution as follows. Let $(\phi_n(\cdot \mid \cdot))_{n\geq 0}$ be
a sequence of test functions such that for $n$ large enough
\begin{gather*}
  Q_0^n\left[P_0^n[\phi_n(\mathbf y \mid \mathbf x) \mid \mathbf x]\right]
  \lesssim N(\epsilon/18,\mathcal{F}_n,d_n)\exp\left(-\frac{n\epsilon_n^2}{2}
  \right),\\
  \sup_{\Set{f \given d_n(f,f_0)\geq 17\epsilon_n / 18}\cap \mathcal{F}_n}
  Q_0^n\left[P_f^n[1 - \phi_{n}(\mathbf y \mid \mathbf x)] \mid \mathbf x
  \right] \lesssim \exp\left(-\frac{n\epsilon_n^2}{2} \right).
\end{gather*}
The existence of such test functions is standard and follows for instance from
\citet[proposition~4]{Birge2006}, or \citet[section~7.7]{ghosal2007bis}. From
here, we bound the posterior distribution in a standard fashion,
\begin{multline*}
  Q_0^n\left[P_0^n[\Pi_{\mathbf x}(\Set{f \given d_n(f,f_0) > \epsilon_n} \mid
\mathbf y, \mathbf x) \mid \mathbf x] \right]
  \leq Q_0^n[P_0^n[\Pi_{\mathbf x}(\mathcal{F}_n^c\mid \mathbf y, \mathbf
  x) \mid \mathbf x]]\\
  +
  Q_0^n[P_0^n[\Pi(\Set{f \given d_n(f,f_0) > \epsilon_n} \cap \mathcal{F}_n \mid
  \mathbf y, \mathbf x) \mid \mathbf x]].
\end{multline*}
So that,
\begin{multline}
  \label{eq:1}
  Q_0^n\left[P_0^n[\Pi_{\mathbf x}(\Set{f \given d_n(f,f_0) > \epsilon_n} \mid
    \mathbf y, \mathbf x) \mid \mathbf x] \right] \leq Q_0^n[P_0^n[\Pi_{\mathbf
    x}(\mathcal{F}_n^c\mid \mathbf y, \mathbf x) \mid \mathbf x ]]\\
  \begin{aligned}
    &\quad+ Q_0^n\left[ P_0^n\left[\phi_n(\mathbf y\mid \mathbf
        x)\Pi_{\mathbf x}(\Set{f \given d_n(f,f_0) > \epsilon_n} \cap
        \mathcal{F}_n\mid \mathbf y, \mathbf{x})\mid \mathbf x\right] \right]\\
    &\quad+ Q_0^n\left[P_0^n\left[\big(1 - \phi_n(\mathbf y \mid \mathbf x)\big)
        \Pi_{\mathbf x}(\Set{f \given d_n(f,f_0) > \epsilon_n} \cap
        \mathcal{F}_n\mid \mathbf y, \mathbf x)\mid \mathbf x \right] \right].
  \end{aligned}
\end{multline}
Now, to any $\mathbf x\in \mathbb R^n$, we associate the event
\begin{equation}
  \label{eq:21}
  E_n(\mathbf x) := \Set*{y \in \mathbb R^n \given
    \int_{\mathcal{F}}\prod_{i=1}^n
    \frac{p_{f}(x_i,y_i)}{p_{f_0}(x_i,y_i)}\,d\Pi_{\mathbf x}(f) \geq
    \exp\left(-(1+ 4c_2)\frac{n\epsilon_n^2}{4}\right)}.
\end{equation}
Consider the first term of the rhs of \cref{eq:1}. We can rewrite,
\begin{multline*}
  Q_0^n[P_0^n[\Pi_{\mathbf x}(\mathcal{F}_n^c\mid \mathbf y, \mathbf x) \mid
  \mathbf x ]]\\
  \begin{aligned}
    &\leq e^{\frac 14(4c_2+1) n\epsilon_n^2}\int_{\mathbb R^n}\int_{E_n(\mathbf
      x)} \int_{\mathcal{F}_n^c} \prod_{i=1}^n
    \frac{p_{f}(x_i,y_i)}{p_{f_0}(x_i,y_i)}\, d\Pi_{\mathbf x}(f) dP_0^n(\mathbf
    y\mid \mathbf x)dQ_0^n(\mathbf x) \\
    &\quad+ \int_{\mathbb R^n}\int_{E_n(\mathbf
      x)^c}dP_0^n(\mathbf y\mid \mathbf x)dQ_0^n(\mathbf
    x)\\
    &=e^{\frac 14(4c_2+1) n\epsilon_n^2}
    \int_{\mathbb R^n} \int_{\mathcal{F}_n^c}
    \int_{E_n(\mathbf x)} dP^n(\mathbf y\mid \mathbf x) d\Pi_{\mathbf
      x}(f)dQ_0^n(\mathbf x)\\
    &\quad+ \int_{\mathbb R^n} \int_{E_n(\mathbf x)^c}
    dP_0^n(\mathbf y\mid \mathbf x)dQ_0^n(\mathbf x)\\
    &\leq e^{\frac 14(4c_2+1)n\epsilon_n^2}\int_{\mathbb R^n}\Pi_{\mathbf
      x}(\mathcal{F}_n^c)\,dQ_0^n(\mathbf x) + \int_{\mathbb
      R^n}\int_{E_n(\mathbf x)^c}dP_0^n(\mathbf y\mid \mathbf x)dQ_0^n(\mathbf
    x),
  \end{aligned}
\end{multline*}
where the third line follows from Fubini's theorem. The same reasoning applies
to the other terms of \cref{eq:1}, using the test functions introduced
above and $0 < c_2 < 1/4$. Hence the theorem is proved if we show that
$\int_{\mathbb R^n}\int_{E_n(\mathbf x)^c} dP_0^n(\mathbf y\mid \mathbf
x)dQ_0^n(\mathbf x) = o(1)$. But under the condition of the theorem,
\citet[Lemma~10]{GhosalVanDerVaartothers2007} implies that
\begin{equation*}
  P_0^n\left(
    \int_{\mathcal{F}}\prod_{i=1}^n\frac{p_f(x_i,Y_i)}{p_{f_0}(x_i,Y_i)}\,
    d\Pi_{\mathbf x}(f) < \exp\left(- \frac 14(1+4c_2)\epsilon_n^2\right)\
    \Bigg|\ \mathbf x \right) = o(1). \qedhere
\end{equation*}

\appendix

\section{Proofs of \cref{lem:4,lem:1} and some technical results on the kernels}
\label{sec:eta}

\subsection{Proof of \cref{lem:4}}

Clearly, $\|\chi_{\sigma} * f\|_1 \leq \|\chi_{\sigma}\|_1\|f\|_1$ by Young's
inequality, so that $\chi_{\sigma}*f \in L^1$ and
$(\chi_{\sigma}*f)^{\wedge}(\xi) =
\widehat{\chi}_{\sigma}(\xi)\widehat{f}(\xi)$, showing that the support of the
Fourier transform of $\chi_{\sigma}*f$ is included in
$[-1/\sigma,1/\sigma]$. Moreover, using again Young's inequality we get that
$\|\chi_{\sigma} * f\|_{\infty} \leq \|\chi_{\sigma}\|_1\|f\|_{\infty}$, thus
$\chi_{\sigma}*f \in L^{\infty}$.

\par Because $\widehat{\chi}$ is $\mathtt C^{\infty}$ and compactly supported,
for any integer $q \geq 0$ we have
$(i u)^q \chi (u) = (2\pi)^{-1}\int \widehat{\chi}^{(q)}(\xi) e^{i\xi
  u}d\xi$. Clearly $\widehat{\chi}$ is Schwartz, hence by Fourier inversion we
have that
\begin{align*}
  \int u^q \chi(u) e^{-i\xi u}du = (-i)^q \widehat{\chi}^{(q)}(\xi),\quad
  \forall \xi \in \mathbb R.
\end{align*}
But, by construction $\widehat{\chi}(0) = 1$, and for any $q\geq 1$ we have
$\widehat{\chi}^{(q)}(0) = 0$. It follows that $\int \chi (u) du = 1$, and
$\int u^q \chi(u) du = 0$ for any $q\geq 1$. Whence, letting $m$ be the largest
integer smaller than $\beta$, and using Taylor's formula with exact remainder
term
\begin{multline*}
  \chi_{\sigma}*f (x) - f(x)\\
  \begin{aligned}
    &= \int \chi_{\sigma}(y)\left[f(x - y) - f(x)\right]\, dy
    = \int \chi(y)\left[f(x - \sigma y) - f(x)\right]\, dy\\
    &= \sum_{k=1}^{m}\frac{(-1)^k \sigma^k}{k!} \int u^k \chi(u)\, du\\
    &\quad + \int \chi(y) \int_{0}^1 (-\sigma y)^m \frac{(1 - u)^{m-1}}{(m-1)!}
    \left[f^{(m)}(x - u\sigma y) - f^{(m)}(x)\right]\, dudy\\
    &= \int \chi(y) \int_{0}^1 (-\sigma y)^m \frac{(1 - u)^{m-1}}{(m-1)!}
    \left[f^{(m)}(x - u\sigma y) - f^{(m)}(x)\right]\, dudy.
  \end{aligned}
\end{multline*}
Therefore, because $f \in \mathtt C^{\beta}$,
\begin{multline*}
  |\chi_{\sigma}* f(x) - f(x)|\\
  \begin{aligned}
    &\leq
    \sigma^m \int |y^m\chi(y)|\int_0^1 \frac{(1 - u)^{m-1}}{(m-1)!}
    |f^{(m)}(x - u\sigma y) - f^{(m)}(x)|\, du dy\\
  &\leq \|f\|_{\mathtt C^{\beta}}\sigma^{\beta} \int |y^{\beta} \chi(y)|\, dy
    \int_0^1 \frac{(1 - u)^{m-1}}{(m-1)!} u^{\beta - m}\, du.
  \end{aligned}
\end{multline*}

\subsection{Proof of \cref{lem:1}}

We mostly follow the proof of \citet[proposition~1]{Hangelbroek2010}. Writing,
\begin{align*}
  K_{h,\sigma}f_{\sigma}(x)
  &= \int \frac{h}{\sigma} \sum_{k\in\mathbb Z} \varphi\left( \frac{x-h\sigma
    k}{\sigma }\right)
    \eta \left(\frac{y-h\sigma k}{\sigma} \right)f_{\sigma}(y)\,dy\\
  &= \frac{h}{\sigma} \sum_{k\in\mathbb Z} \varphi\left( \frac{x-h\sigma
    k}{\sigma} \right)
    \int \eta \left(\frac{y-h \sigma k}{\sigma} \right)f_{\sigma}(y)\,dy\\
  &= \frac{h}{2\pi}\sum_{k\in\mathbb Z} \varphi\left( \frac{x-h\sigma
    k}{\sigma} \right)
    \int \widehat{\eta}(\sigma\xi) \widehat{f}_{\sigma}(\xi) e^{i\xi h\sigma
    k}\,d\xi\\
  &=  \int \widehat{\eta}(\sigma \xi) \widehat{f}_{\sigma}(\xi)
    \frac{h}{2\pi}\sum_{k\in\mathbb Z} \varphi\left( \frac{x-h\sigma k}{\sigma
    } \right) e^{i\xi h \sigma k}\, d\xi.
\end{align*}
Then we can invoke the \textit{Poisson summation formula}
\citep[theorem~4.1]{HaerdleKerkyacharianPicardEtAl1998}, which is obviously
valid for $\varphi$, and
\begin{align*}
  \sum_{k\in \mathbb Z} \varphi\left(\frac{x - h \sigma k}{\sigma}\right)
  e^{i\xi h \sigma k}
  = \frac 1h \sum_{m\in \mathbb Z}\widehat{\varphi}\left(\sigma \xi +
  \frac{2\pi m}{h}\right)e^{i(\sigma \xi + \frac{2\pi m}{h}) x / \sigma}.
\end{align*}
Therefore, recalling that $\widehat{f}_{\sigma}$ is supported on
$[-1/\sigma,1/\sigma]$ and $\widehat{\chi}$ equals $1$ on $[-1,1]$,
\begin{align*}
  K_{h,\sigma}f_{\sigma}(x)
  &=
    \frac{1}{2\pi}\int \widehat{\chi}(\sigma \xi) \widehat{f}_\sigma(\xi)
    \sum_{m\in \mathbb Z} \frac{\widehat{\varphi}(\sigma\xi + 2\pi
    m /h)}{\widehat{\varphi}(\sigma\xi)} e^{i(\sigma\xi + \frac{2\pi m}{h}) x
    / \sigma }\, d\xi\\
  &=f_{\sigma}(x) + \frac{1}{2\pi}\sum_{m\in \mathbb{Z}\backslash \Set{0}}
    \int \widehat{f}_{\sigma}(\xi) \frac{\widehat{\varphi}(\sigma\xi + 2\pi
    m/h)}{\widehat{\varphi}(\sigma\xi)} e^{i(\sigma\xi + \frac{2\pi m}{h}) x /
    \sigma }\, d\xi.
\end{align*}
It follows that,
\begin{align*}
  |K_{h,\sigma}f_{\sigma}(x) - f_{\sigma}(x)|
  &\leq \frac{1}{2\pi}\|\widehat{f}_{\sigma}\|_1 \sup_{\xi \in
    [-1,1]}\sum_{m\in \mathbb Z\backslash \Set{0}}\left |
    \frac{\widehat{\varphi}(\xi +  2\pi m /h )}{\widehat{\varphi}(\xi)}
    \right|.
\end{align*}
Now,
$\|\widehat{f}_{\sigma}\|_1 \leq 2\sigma^{-1}\|\widehat{f}_{\sigma}\|_{\infty}
\leq 2 \sigma^{-1}\|f_{\sigma}\|_1 \leq 2\sigma^{-1}\|f\|_1$, which is finite
because of \cref{lem:4}. Recalling that by assumption $\widehat{\varphi}$ is
Gaussian, it follows for all $\xi \in [-1,1]$ and all $h \leq 1$,
\begin{align*}
  \sum_{m\in \mathbb Z\backslash \Set{0}} \left | \frac{\widehat{\varphi}(\xi
  + 2\pi m  /h )}{\widehat{\varphi}(\xi)} \right|
  &\leq \exp\left\{
    -\frac 12 (\xi + 2\pi m /h)^2 + \frac 12 \xi^2 \right\}\\
  &\leq e^{-1/2}
    \sum_{m\in \mathbb Z \backslash \Set{0}} e^{-4\pi^2m^2/h^2}
    \leq
    4e^{-1/2}e^{-4\pi^2/h^2}.
\end{align*}
Then the lemma is proved with $C := 8e^{-1/2}$.

\subsection{Some other technical results on $K_{h,\sigma}$}

\begin{lemma}
  \label{pro:4}
  There is a universal constant $C > 0$ such that for all $x\in \mathbb R$, all
  $0 < h \leq 1$ and all $\sigma > 0$,
  $\sum_{k\in\mathbb Z}|\eta((x - h\sigma k)/\sigma)| \leq C h^{-1}$. Moreover,
  $\eta \in \mathcal{S}$.
\end{lemma}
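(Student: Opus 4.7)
The plan is to handle the two assertions separately, proving $\eta \in \mathcal{S}$ first because it provides the tool (rapid decay) we need for the summability estimate.

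First, I would establish that $\eta \in \mathcal{S}$ by analyzing $\widehat{\eta}$. By construction $\widehat{\chi}$ is $C^{\infty}$, supported in $[-2,2]$, and equal to $1$ on $[-1,1]$, while $\widehat{\varphi}(\xi) = \sqrt{2\pi}\,e^{-\xi^2/2}$ is a smooth, strictly positive function on $\mathbb{R}$. Hence on the open interval $(-2,2)$ the ratio $\widehat{\chi}/\widehat{\varphi}$ is $C^{\infty}$, and because $\widehat{\chi}$ vanishes together with all its derivatives at $\xi = \pm 2$ (being smooth and supported in $[-2,2]$), the extension $\widehat{\eta}$ by $0$ outside $[-2,2]$ is $C^{\infty}$ on all of $\mathbb{R}$. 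Thus $\widehat{\eta} \in C^{\infty}_c \subset \mathcal{S}$, and since the inverse Fourier transform maps $\mathcal{S}$ onto itself, $\eta \in \mathcal{S}$.

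Now for the sum bound, I would exploit that, as an element of $\mathcal{S}$, $\eta$ satisfies $|\eta(y)| \leq C_0 (1+y^2)^{-1}$ for a universal constant $C_0$. Writing $y_k := (x - h\sigma k)/\sigma = x/\sigma - hk$, the points $\{y_k\}_{k\in \mathbb{Z}}$ form an arithmetic progression of step $h$. By translating the index ($k \mapsto k + k_0$ with $k_0$ the nearest integer to $x/(h\sigma)$), I may assume the distinguished base point lies in $[0,h]$, so that $|y_k| \geq h(|k|-1)$ for $|k|\geq 1$ and $|y_0|,|y_1|\leq h\leq 1$.

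With this reduction, the bound becomes
\begin{equation*}
  \sum_{k \in \mathbb{Z}} |\eta(y_k)|
  \leq C_0 \Bigl( 2 + 2\sum_{k\geq 1}\frac{1}{(1+hk)^2} \Bigr)
  \leq C_0 \Bigl( 2 + 2\int_0^{\infty}\frac{dt}{(1+ht)^2}\Bigr)
  = C_0 \Bigl( 2 + \tfrac{2}{h} \Bigr),
\end{equation*}
which is bounded by $4 C_0 / h$ since $h\leq 1$. This yields the claim with $C := 4C_0$.

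The only genuine subtlety is checking smoothness of $\widehat{\eta}$ at the endpoints $\pm 2$, since naively one is dividing a smooth function by another smooth function on a region where the numerator is being turned on. The decisive point is that $\widehat{\chi}$, being smooth with compact support in $[-2,2]$, is flat of infinite order at $\pm 2$, which the non-vanishing Gaussian denominator cannot ruin; everything else is routine rapid-decay bookkeeping.
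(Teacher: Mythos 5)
Your argument is correct and essentially the same as the paper's: both establish $\eta\in\mathcal{S}$ by observing that $\widehat{\eta}=\widehat{\chi}/\widehat{\varphi}$ extends to a function in $C_c^{\infty}\subset\mathcal{S}$ (you are in fact more explicit than the paper about the flatness of $\widehat{\chi}$ at $\pm 2$, which is indeed the only delicate point), and both bound the lattice sum by reducing $x$ to a fundamental cell of the $h\sigma$-lattice and exploiting the quadratic decay of the Schwartz function $\eta$ to get $O(1/h)$. The only cosmetic differences are that the paper invokes periodicity in $x$ and the seminorms $\|\eta\|_{0,0},\|\eta\|_{2,0}$ where you translate the index and compare with an integral; just make sure the decay bound you quote matches the denominator you sum (use $|\eta(y)|\lesssim(1+|y|)^{-2}$ to justify the $(1+hk)^{-2}$ terms).
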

\begin{proof}
  \par We first prove that $\eta \in \mathcal{S}$. Obviously
  $\widehat{\varphi} \in \mathcal{S}$, and therefore so is
  $\widehat{\eta}$. Since the Fourier transform and the inverse Fourier
  transform are continuous mapping of $\mathcal{S}$ onto itself, it is immediate
  that $\eta \in \mathcal{S}$.

  \par We finish the proof by remarking that
  $x \mapsto \sum_{k\in\mathbb Z}|\eta((x - h\sigma k)/\sigma)|$ is periodic
  with period $h\sigma$, hence it suffices to check that it is bounded for
  $x \in [0,h\sigma]$. If $x\in [0,h\sigma]$, then
  $|x - h\sigma k| \geq |h \sigma k| / 2$ for any $|k| \geq 2$, so that 
  \begin{align*}
    \sum_{k\in \mathbb Z}|\eta((x - h\sigma k)/\sigma)|
    &\leq 3 \sup_{u\in \mathbb R}|\eta(u)| + \sum_{|k|\geq 2}|\eta((x -
      h\sigma k)/\sigma)|\\
    &\leq 3 \|\eta\|_{0,0} +\|\eta\|_{2,0} \sum_{|k|\geq 2}\left(1 + |hk|/2
      \right)^{-2}\\
    &\leq 3\|\eta\|_{0,0} + 4\|\eta\|_{2,0} / h,
  \end{align*}
  which concludes the proof of the first assertion with
  $C :=3\|\eta\|_{0,0} + 4\|\eta\|_{2,0} $, because of the assumption
  $h \leq 1$.
\end{proof}

The following Lemma gives some control on the coefficients of $f$ on $\eta$.

\begin{proposition}
  \label{pro:2}
  Let $0 < h \leq 1$ and
  $a_k(f) := (h/\sigma)\int \eta((y-h\sigma k)/\sigma)f(y)\,dy$x. Then there are
  universal constants $C,C' > 0$, depending only on $\varphi$, such that
  $\sum_{k\in \mathbb Z}|a_k(f)| \leq C \|f\|_1 \sigma^{-1}$, and for all
  $k \in \mathbb Z$, $|a_k(f)| \leq C' \|f\|_{\infty}$.
\end{proposition}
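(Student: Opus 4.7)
The plan is to prove the two bounds separately, using Fubini's theorem together with Lemma~\ref{pro:4} for the $L^1$ bound, and a direct absorption of the mass of $\eta$ for the $L^\infty$ bound.

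For the second inequality, I would simply pull the absolute value inside the integral and change variables $u = (y - h\sigma k)/\sigma$ to obtain
\begin{equation*}
  |a_k(f)| \leq \frac{h}{\sigma}\|f\|_\infty \int |\eta((y-h\sigma k)/\sigma)|\,dy
  = h \|f\|_\infty \|\eta\|_1 \leq \|\eta\|_1 \|f\|_\infty,
\end{equation*}
using $h \leq 1$ in the last step. Here $\|\eta\|_1 < \infty$ because $\eta \in \mathcal S$ by Lemma~\ref{pro:4}, so one can take $C' := \|\eta\|_1$, which depends only on $\varphi$ through $\widehat\eta = \widehat\chi / \widehat\varphi$.

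For the first inequality, I would bound each summand crudely, swap sum and integral via Fubini--Tonelli (everything is non-negative), and then invoke the periodic estimate of Lemma~\ref{pro:4}:
\begin{equation*}
  \sum_{k\in\mathbb Z} |a_k(f)| \leq \frac{h}{\sigma}\int |f(y)| \sum_{k\in\mathbb Z}\Bigl|\eta\Bigl(\frac{y - h\sigma k}{\sigma}\Bigr)\Bigr|\,dy
  \leq \frac{h}{\sigma}\cdot \frac{C_0}{h}\cdot \|f\|_1 = \frac{C_0}{\sigma}\|f\|_1,
\end{equation*}
where $C_0$ is the universal constant produced in the conclusion of Lemma~\ref{pro:4}. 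Setting $C := C_0$ gives the claim.

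There is no real obstacle: both inequalities are immediate once one recognizes that Lemma~\ref{pro:4} bounds the periodization of $|\eta|$ uniformly by $C_0/h$, and that $\eta \in \mathcal S$ guarantees $\|\eta\|_1 < \infty$. The only mild point is that both constants depend on $\eta$, hence only on $\varphi$ (through the defining relation $\widehat\eta = \widehat\chi/\widehat\varphi$ on $[-2,2]$ and $\widehat\eta = 0$ elsewhere), which is exactly the dependence claimed in the statement.
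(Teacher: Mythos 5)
Your proof is correct and follows essentially the same route as the paper's: both bounds swap sum and integral (Tonelli), control the periodization $\sum_k|\eta((y-h\sigma k)/\sigma)|$ by $C_0/h$ via \cref{pro:4} for the $L^1$ estimate, and use $\|\eta\|_1<\infty$ (from $\eta\in\mathcal S$) for the $L^\infty$ estimate. Nothing further is needed.
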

\begin{proof}
  For the first assertion of the proposition, we write,
  \begin{align*}
    \sum_{k\in \mathbb Z}|a_k(f)|
    &\leq \frac{h}{\sigma}\sum_{k\in\mathbb Z}
      \int |f(y)| |\eta((y-h\sigma k)/\sigma)|\, dy\\
    &\leq \sigma^{-1}\|f\|_1 \sup_{y\in \mathbb R} h \sum_{k\in\mathbb
      Z}|\eta((y-h\sigma k)/\sigma)|,
  \end{align*}
  and the conclusion follows from \cref{pro:4}. The proof of the second
  assertion is simpler. Indeed,
  \begin{align*}
    |a_k(f)|
    &\leq \frac{h}{\sigma} \int
      |f(y)| |\eta((y-h\sigma k)/\sigma)|\, dy
      \leq h \|f\|_{\infty}\int |\eta(u)|\, du,
  \end{align*}
  where the last integral is bounded because $\eta \in \mathcal{S}$ by
  \cref{pro:4}.
\end{proof}

\section{Proof of \cref{lem:8}}
\label{sec:prlem8}

Let $x \in \mathbb R^n$ arbitrary, $\sigma > 0$ and
$h_{\sigma}\sqrt{\log \sigma^{-1}} := 2\pi\sqrt{\beta +1}$. Recall that from
\cref{lem:4,lem:1} we have
$\|K_{h_{\sigma},\sigma}(\chi_{\sigma}*f_0) - f_0\|_{\infty} \lesssim
\sigma^{\beta}$, where
$K_{h_{\sigma},\sigma}(\chi_{\sigma}*f_0)(z) := \sum_{k\in \mathbb Z}u_k\,
\varphi((z - h_{\sigma}\sigma k)/\sigma)$. Define
$S_n(x) := \cup_{i=1}^n\Set{z \in \mathbb R \given |z - x_i| \leq
  \sigma\sqrt{2(\beta + 1)\log \sigma^{-1}}}$ and
\begin{equation*}
  \Lambda(x) := \Set{ k \in \mathbb Z \given |u_k| > \sigma^{\beta},\quad
    h_{\sigma} \sigma k \in S_n(x) }.
\end{equation*}
Also define
$U_{\sigma} := \Set{\sigma' \given \sigma \leq \sigma' \leq \sigma(1+
  \sigma^{\beta)}}$, and for all $k \in \Lambda(x)$ define
$V_k := \Set{\mu \given |\mu - h_{\sigma}\sigma k| \leq \sigma^{\beta+1}}$. We
denote by $\mathcal{M}_{\sigma}$ the set of signed measures $M$ on $\mathbb R$
such that $|M(V_k) - u_k| \leq \sigma^{\beta}$ for all $k\in \Lambda(x)$ and
$|M|(V^c) \leq \sigma^{\beta}$, where $V^c$ is the relative complement of the
union of all $V_k$ for $k\in \Lambda(x)$. For any $M\in \mathcal{M}_{\sigma}$,
we write $f_{M,\sigma}(z) := \int \varphi((z-\mu)/\sigma)\,dM(\mu)$. Act as in
\cref{pro:10} to find that $d_n(f,f_0) \leq C h_{\sigma}^{-2}\sigma^{\beta}$ for
any $M \in \mathcal{M}_{\sigma}$, with a constant $C > 0$ not depending on
$x$. By construction of $S_n(x)$, for all $k\in \Lambda(x)$ there is at least
one $x_i$ such that
$|h_{\sigma}\sigma k - x_i| \leq \sigma\sqrt{2(\beta+1)\log \sigma^{-1}}$. Then
for any $k\in \Lambda(x)$, by definition of $G_x$
\begin{equation*}
  \overline{\alpha}G_x(V_k)
  \geq n^{-1}\int_{h_{\sigma}\sigma k - \sigma^{\beta+1}}^{h_{\sigma}\sigma k
    + \sigma^{\beta +1}}g(z - x_i)\, dz
  \geq a_{21}n^{-1}\sigma^{a_{22}(\beta + 1)}.
\end{equation*}
Remarking that $|\Lambda(x)| \lesssim \sigma^{-(\beta+1)}$ independently of $x$
(see \cref{pro:13}) and letting $\epsilon_n = C'h_{\sigma}^{-2}\sigma^{\beta}$
we can mimic the steps of \cref{sec:kullb-leibl-cond-2} to find that
\begin{align*}
  \Pi_x(f\, :\, d_n(f,f_0) \leq s\epsilon)
  &\gtrsim
  \exp\left\{-C''|\Lambda(x)|\log \sigma^{-1} - C''|\Lambda(x)|\log n \right\}\\
  &\gtrsim \exp(-c_2 n\epsilon_n^2),
\end{align*}
for a constant $C''>0$ not depending on $x$ and $\epsilon_n^2$ defined in the
lemma.

\section{Some technical results on the construction of the approximation in the
  case of location-scale mixtures}
\label{pr:locationscale}

\begin{proposition}
  \label{pro:5}
  Let $f_0\in \mathcal C_\beta$. For any $j\geq 0$, we have
  $|\Delta_j(x)| \leq C \|f_0\|_{\mathtt C^{\beta}} \sigma_j^{\beta}$, with the
  same constant $C> 0$ as in \cref{lem:4}. Moreover,
  $\|\Delta_j\|_1 \leq 2\|f_0\|_1$ for all $j\geq 0$.
\end{proposition}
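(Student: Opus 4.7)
The plan is to prove both assertions by induction on $j \geq 0$. For the pointwise bound, the base case $j = 0$ is \cref{lem:4} applied to $f_0$ at scale $\sigma_0$, which yields $|\Delta_0(x)| \leq C\|f_0\|_{\mathtt C^\beta}\sigma_0^\beta$ with the very constant $C$ of \cref{lem:4}. For the inductive step I would write $\Delta_j = \Delta_{j-1} - \chi_{\sigma_j}*\Delta_{j-1}$ and repeat the Taylor-expansion argument from the proof of \cref{lem:4}, applied to $\Delta_{j-1}$ at scale $\sigma_j$. Exploiting that $\chi$ has vanishing moments of all orders (since $\widehat\chi \equiv 1$ near zero), this gives
\[
\Delta_j(x) = \int \chi(u)\int_0^1 (-\sigma_j u)^m \frac{(1-s)^{m-1}}{(m-1)!}\bigl[\Delta_{j-1}^{(m)}(x - s\sigma_j u) - \Delta_{j-1}^{(m)}(x)\bigr]\,ds\,du
\]
with $m = \lfloor \beta \rfloor$, and thus $|\Delta_j(x)| \leq C\bigl[\Delta_{j-1}^{(m)}\bigr]_{\beta-m}\,\sigma_j^\beta$. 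The inductive step therefore reduces to the uniform-in-$j$ bound $\bigl[\Delta_{j-1}^{(m)}\bigr]_{\beta-m} \leq \|f_0\|_{\mathtt C^\beta}$.

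To establish this uniform control, I would observe that differentiation commutes with convolution, so $\Delta_j^{(m)}$ satisfies the same recurrence as $\Delta_j$ with $f_0^{(m)}$ in place of $f_0$. On the Fourier side,
\[
\widehat{\Delta_j^{(m)}}(\xi) = \widehat{f_0^{(m)}}(\xi)\prod_{k=0}^{j}\bigl(1 - \widehat\chi(2\sigma_k\xi)\bigr).
\]
By the explicit construction of $\widehat\chi$ (convolution of $\Ind_{[-1,1]}$ with a unit-mass bump), $\widehat\chi$ takes values in $[0,1]$, so each factor in the product lies in $[0,1]$; the operators $T_{\sigma_k} := I - \chi_{\sigma_k}*$ are thereby Fourier contractions. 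Translating this Fourier-side contraction into a space-side bound on the $(\beta-m)$-Hölder seminorm supplies the uniform control required in the inductive step.

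The $L^1$ statement $\|\Delta_j\|_1 \leq 2\|f_0\|_1$ is handled by a parallel induction. The base case is Young's inequality applied to $\Delta_0 = f_0 - \chi_{\sigma_0}*f_0$, and for the inductive step I would again invoke the Fourier contraction of $T_{\sigma_j}$ in place of a direct Young-inequality bound on $\chi_{\sigma_j}*\Delta_{j-1}$, so that the $L^1$ norm of the iterates is controlled independently of $j$.

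The main obstacle in both parts will be preserving the uniformity of the constants in $j$: iterating Young's inequality or the naive Hölder bound for $T_\sigma$ produces factors of order $(1+\|\chi\|_1)^j$, which blow up exponentially. Circumventing this requires exploiting the Fourier-side bound $|1-\widehat\chi(2\sigma\xi)| \leq 1$ and converting it into the relevant space-side norm; this transfer is the delicate heart of the argument, the rest of the proof being a direct rerun of the calculation in \cref{lem:4}.
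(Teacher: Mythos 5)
Your proposal correctly diagnoses the danger (iterating Young's inequality or the naive H\"older bound produces a factor $(1+\|\chi\|_1)^j$) and correctly writes down the Fourier-side product formula $\widehat{\Delta}_j(\xi)=\widehat{f}_0(\xi)\prod_{l=0}^{j}\bigl(1-\widehat{\chi}(2\sigma_l\xi)\bigr)$. But the step you defer to the end --- converting the pointwise multiplier bound $|1-\widehat{\chi}(2\sigma\xi)|\le 1$ into contractivity on $L^1$ and on the $(\beta-m)$-H\"older seminorm --- is not a delicate technicality; it is unavailable. A Fourier multiplier with $\|m\|_\infty\le 1$ gives a contraction on $L^2$, but a translation-invariant operator is bounded on $L^1$ if and only if its multiplier is the Fourier transform of a finite measure, with operator norm equal to that measure's total variation. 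Here $1-\widehat{\chi}_{\sigma}$ is the transform of $\delta_0-\chi_{\sigma}(x)\,dx$, whose total variation is $1+\|\chi\|_1\ge 2$, so each operator $I-\chi_{\sigma_k}*$ has $L^1$-operator norm strictly larger than $1$ (and the analogous statement holds for the H\"older seminorm). Your inductive step therefore cannot close: the uniform-in-$j$ control of $\bigl[\Delta_{j-1}^{(m)}\bigr]_{\beta-m}$ and of $\|\Delta_j\|_1$ does not follow from the ``Fourier contraction'' you invoke.

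What rescues the argument --- and what the paper's proof does --- is that the product need not be estimated at all: it collapses exactly. Since $\widehat{\chi}_{\sigma}(\xi)=\widehat{\chi}(2\sigma\xi)$ equals $1$ for $|\xi|\le 1/(2\sigma)$ and vanishes for $|\xi|\ge 1/\sigma$, and since $\sigma_m\le\sigma_l/2$ for $m>l$, the plateau of $\widehat{\chi}_{\sigma_m}$ contains the support of $\widehat{\chi}_{\sigma_l}$, so $\widehat{\chi}_{\sigma_m}\widehat{\chi}_{\sigma_l}=\widehat{\chi}_{\sigma_l}$ and the product telescopes: $\prod_{l=0}^{j}(1-\widehat{\chi}_{\sigma_l})=1-\widehat{\chi}_{\sigma_j}$. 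Hence $\Delta_j=f_0-\chi_{\sigma_j}*f_0$ identically, and both assertions follow with $j$-independent constants from a single application of \cref{lem:4} (pointwise bound, with its constant $C$) and of Young's inequality (the $L^1$ bound); no induction and no multiplier transfer are needed. You had every ingredient for this identity in hand --- the missing observation is the nesting of the plateaus of the $\widehat{\chi}_{\sigma_l}$.
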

\begin{proof}
  Notice that
  $\|\Delta_{j+1}\|_1 \leq \|\Delta_j\|_1 + \|\chi_{\sigma_{j+1}}*\Delta_j\|
  \leq (1 + \|\chi\|_1) \|\Delta_j\|_1$, by Young's inequality. Since
  $f_0\in L^1$, this implies $\Delta_j \in L^1$ for all $j\geq 0$. Since
  $\widehat{\Delta}_{j+1}(\xi) = \widehat{\Delta}_j(\xi) -
  \widehat{\chi}_{\sigma_{j+1}}(\xi)\widehat{\Delta}_j(\xi)$, we get
  $\widehat{\Delta}_j(\xi) = \widehat{f_0}(\xi)\prod_{l=1}^j\left(1 -
    \chi_{\sigma_l}(\xi) \right)$, by induction. Because
  $\sigma_{j+1} = \sigma_j / 2$, and by construction of $\chi_{\sigma_l}$ we
  have
  $\widehat{\chi}_{\sigma_m}(\xi)\widehat{\chi}_{\sigma_l}(\xi) =
  \widehat{\chi}_{\sigma_m}(\xi)$ for any $m > l$, hence the last equation can
  be rewritten as
  $\widehat{\Delta}_j(\xi) = \widehat{f}_0(\xi)(1 -
  \widehat{\chi}_{\sigma_j}(\xi))$. Then we deduce that
  $\Delta_j = f_0 - \chi_{\sigma_j}* f_0$. By \cref{lem:4}, this implies that
  $|\Delta_j(x)| \leq C\|f_0\|_{\mathtt C^{\beta}} \sigma_j^{\beta}$. From the
  same estimate, it is clear that
  $\|\Delta_j\| \leq \|f_0|_1 + \|\chi_{\sigma_j}*f_0\| \leq 2\|f_0\|_1$.
\end{proof}

\subsection{Proof of \cref{pro:3}}
\label{sec:prpro3}

Let define $A(\beta,J) := (2\log|\Lambda| + 2\beta \log\sigma_J^{-1})^{1/2}$ and
$\mathcal{J} \equiv \mathcal{J}(x) := \Set{(j,k)\in \Set{0,\dots,J}\times
  \mathbb Z \given |x - \mu_{jk}| \leq 4 A(\beta,J)\sigma_j}$. For any
$M \in \mathcal{M}$ we can write
\begin{multline*}
  f_M(x) - f_0(x) = \sum_{(j,k) \in \Lambda \cap \mathcal{J}} \int_{W_{jk}}
  \left[ \varphi\left( \frac{x - \mu}{\sigma} \right) - \varphi\left( \frac{x -
        \mu_{jk}}{\sigma_j} \right) \right]\, dM(\sigma,\mu)\\
  + \sum_{(j,k)\in \Lambda \cap \mathcal{J}}\left[M(W_{jk}) - u_{jk} \right]
  \varphi\left( \frac{x - \mu_{jk}}{\sigma_j} \right) + \sum_{(j,k) \in \Lambda
    \cap \mathcal{J}^c}\int_{W_{jk}} \varphi\left(
    \frac{x - \mu}{\sigma} \right)\, dM(\sigma,\mu)\\
  - \sum_{(j,k)\in \Lambda \cap \mathcal{J}^c} u_{jk}\, \varphi\left( \frac{x -
      \mu_{jk}}{\sigma_j} \right) - \sum_{(j,k) \notin \Lambda}u_{jk}\,
  \varphi\left( \frac{x - \mu_{jk}}{\sigma_j} \right)\\
  + \int_{W^c}\varphi\left(\frac{x -
      \mu}{\sigma} \right)\, dM(\sigma,\mu)\\
  := r_1(x) + r_2(x) + r_3(x) + r_4(x) + r_5(x) + r_6(x).
\end{multline*}
The proof follows similar steps as the proof of \cref{pro:10}. From the
definition of $A(\beta,J)$ and \cref{pro:6}, we deduce that
$A(\beta, J) \lesssim \sqrt{J}$ for $J$ large enough. Also, there is a
separation of $h_J\sigma_j$ between two consecutive $\mu_{jk}$. Then there are
no more than $2A(\beta,J)\sigma_j/(h_J\sigma_j) = 2A(\beta,J)h_J^{-1}$ distinct
values of $\mu_{jk}$ in an interval of length $2A(\beta,J)\sigma_j$. Thus the
bound $|\Lambda \cap \mathcal{J}| \leq 2(J+1)A(\beta, J) \lesssim J^{3/2}$
holds. It follows from \cref{pro:1} that
$|r_1(x)| \lesssim |\Lambda \cap \mathcal{J}| \sigma_J^{\beta} \lesssim J^{3/2}
\sigma_J^{\beta}$. Obviously,
$|r_2(x)| \leq \|\varphi\|_{\infty} |\Lambda \cap \mathcal{J}| \sigma_J^{\beta}
\lesssim J^{3/2} \sigma_J^{\beta}$. Whenever
$(j,k) \in \Lambda \cap \mathcal{J}^c$ and $(\sigma,\mu) \in W_{jk}$, choosing
$J$ large enough so that $1/2\leq \sigma_j/\sigma \leq 2$ and
$|\mu - \mu_{jk}| \leq \sigma_j A(\beta,J)/2$, it holds
$|x - \mu|\geq A(\beta,J)\sigma$. Therefore,
$|r_3(x)| \lesssim \varphi(A(\beta,J)) |\Lambda| \leq \sigma_J^{\beta}$. With
the same reasoning we get $|r_4(x)| \lesssim \|f\|_{\infty}
\sigma_J^{\beta}$. Regarding $r_6$, we have the obvious bound
$|r_6(x)| \leq \|\varphi\|_{\infty}\sigma_J^{\beta}$. The $r_5$ term is more
subtle and constitutes the remainder of the proof.

Let
$\Lambda_1^c := \Set{(j,k)\in \Set{0,\dots,J}\times \mathbb Z \given |u_{jk}|
  \leq \sigma_J^{\beta}}$ and
$\mathcal{K}_j := \Set{k \in \mathbb Z \given |\mu_{jk}| > \zeta_j +
  \sqrt{2(\beta + 1)\log \sigma_J^{-1}} }$. Assuming that
$x \in [-\zeta_q,\zeta_q]$ for some $0 \leq q \leq J$, we can bound $r_5(x)$ as
follows,
\begin{multline}
  \label{eq:7}
  |r_5(x)| \leq \sum_{(j,k)\in \Lambda_1^c}|u_{jk}|\, \varphi\left( \frac{x -
      \mu_{jk}}{\sigma_j} \right)\\ + \sum_{j \leq q}\sum_{k\in \mathcal{K}_j}
  u_{jk}\, \varphi\left( \frac{x - \mu_{jk}}{\sigma_j} \right) + \sum_{j >
    q}\sum_{k\in \mathcal{K}_j} u_{jk}\, \varphi\left( \frac{x -
      \mu_{jk}}{\sigma_j} \right),
\end{multline}
where the third term of the rhs does not exist if $q=J$. The first term of the
rhs of \cref{eq:7} is bounded by
$\sigma_J^{\beta}\sup_{x\in \mathbb R} \sum_{j=0}^J \sum_{k\in \mathbb Z}
\varphi((x-\mu_{jk})/\sigma)$, which is in turn bounded by a constant multiple
of $J^{3/2}\sigma_J^{\beta}$ (see for instance the proof of
\cref{pro:4}). Because of \cref{pro:2,pro:5}, when $x \in [-\zeta_q,\zeta_q]$ we
always have
\begin{align*}
  \sum_{j\leq q}\sum_{k\in \mathcal{K}_j} |u_{jk}|\, \varphi\left(
  \frac{x - \mu_{jk}}{\sigma_j} \right)
  &\leq \sup_{\substack{j\leq q\\ k\in \mathcal{K}_j}}
  \varphi\left(
  \frac{x - \mu_{jk}}{\sigma_j} \right)
  \sum_{j\leq J}\sum_{k\in \mathbb Z}|u_{jk}|\\
  &\leq \sigma_J^{\beta+1} \sum_{j\leq J} 2\|f_0\|_1\sigma_j^{-1}
    \leq 4\|f_0\|_1 \sigma_J^{\beta}.
\end{align*}
Regarding the second term of the rhs of \cref{eq:7}, we introduce the sets of
indexes
$\mathcal{L}_j \equiv \mathcal{L}_j(x) := \Set{ k\in \mathcal{K}_j \given |x -
  \mu_{jk}| \leq \sigma_j \sqrt{2(\beta + 1)\log \sigma_J^{-1}}}$. Then, we can
split again the sum as
\begin{multline*}
  \sum_{j > q}\sum_{k\in \mathcal{K}_j} u_{jk}\, \varphi\left( \frac{x -
      \mu_{jk}}{\sigma_j} \right) =\\ \sum_{j>q} \sum_{k\notin
    \mathcal{L}_j}u_{jk}\, \varphi\left( \frac{x - \mu_{jk}}{\sigma_j} \right) +
  \sum_{j>q} \sum_{k \in \mathcal{L}_j}u_{jk}\, \varphi\left( \frac{x -
      \mu_{jk}}{\sigma_j} \right).
\end{multline*}
With exactly the same reasoning as before, we get that the first sum of the rhs
of the last expression is bounded above by
$4\|f\|_1\sigma_J^{\beta}$. Concerning the second term, for any $j\geq 1$ we get
from \cref{pro:2,pro:5}, together with the definition of $u_{jk}$, that
$|u_{jk}| \lesssim \|f\|_{\mathtt C^{\beta}}\sigma_j^{\beta}$. Since there is
$h_J\sigma_j$ separation between two consecutive $\mu_{jk}$, we deduce that
$|\mathcal{L}_j| \leq 2h_J^{-1}\sqrt{2(\beta + 1)\log
  \sigma_J^{-1}}$. Therefore, for $J$ large enough and
$x \in [-\zeta_q,\zeta_q]$ with $0 \leq q \leq J$,
\begin{equation*}
  |r_5(x)|
  \lesssim \|f_0\|_1 \sigma_J^{\beta}
  + \|f_0\|_{\mathtt C^{\beta}}\sqrt{2(\beta + 1)\log
    \sigma_J^{-1}} \sum_{j> q}\sigma_j^{\beta}
  \lesssim \sqrt{J}\sigma_q^{\beta}.
\end{equation*}
The conclusion of the proposition follows by combining all the preceding points.

\section{Elementary results}
\label{sec:elementary-results}

\begin{proposition}
  \label{pro:1}
  Let $\varphi(x) = \exp(-x^{2}/2)$. Then, for all $\mu_1,\mu_2 \in \mathbb R$,
  and all $\sigma_1,\sigma_2 > 0$ with $1/2 \leq \sigma_1/\sigma_2 \leq 2$,
  \begin{align*}
    \sup_{x\in\mathbb R}\left|\varphi\left(\frac{x - \mu_1}{\sigma_1}\right) -
    \varphi\left(\frac{x - \mu_2}{\sigma_2}\right)\right|
    &\leq
      4\frac{|\sigma_1 - \sigma_2|}{\sigma_1 \vee
      \sigma_2} + \frac{|\mu_1 - \mu_2|}{\sigma_1 \vee \sigma_2}.
  \end{align*}
\end{proposition}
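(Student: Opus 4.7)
The plan is to reduce the proof to a telescoping triangle inequality and then bound each one-variable change separately by the mean value theorem. Without loss of generality, assume $\sigma_1 \vee \sigma_2 = \sigma_1$, so $\sigma_1/2 \leq \sigma_2 \leq \sigma_1$. I will insert the intermediate function $\varphi((x-\mu_2)/\sigma_1)$ and write
\[
\varphi\Bigl(\tfrac{x-\mu_1}{\sigma_1}\Bigr) - \varphi\Bigl(\tfrac{x-\mu_2}{\sigma_2}\Bigr) = \Bigl[\varphi\Bigl(\tfrac{x-\mu_1}{\sigma_1}\Bigr) - \varphi\Bigl(\tfrac{x-\mu_2}{\sigma_1}\Bigr)\Bigr] + \Bigl[\varphi\Bigl(\tfrac{x-\mu_2}{\sigma_1}\Bigr) - \varphi\Bigl(\tfrac{x-\mu_2}{\sigma_2}\Bigr)\Bigr].
\]

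For the first bracket (the location change), I use the elementary estimate $|\varphi'(u)| = |u|e^{-u^2/2} \leq e^{-1/2} \leq 1$, so by the mean value theorem
\[
\Bigl|\varphi\Bigl(\tfrac{x-\mu_1}{\sigma_1}\Bigr) - \varphi\Bigl(\tfrac{x-\mu_2}{\sigma_1}\Bigr)\Bigr| \leq \frac{|\mu_1-\mu_2|}{\sigma_1} = \frac{|\mu_1-\mu_2|}{\sigma_1 \vee \sigma_2},
\]
which gives exactly the second term of the required bound.

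For the second bracket (the scale change), I set $h(\sigma) := e^{-(x-\mu_2)^2/(2\sigma^2)}$ and differentiate to obtain $h'(\sigma) = u^2 e^{-u^2/2}/\sigma$ with $u = (x-\mu_2)/\sigma$. Since $\sup_u u^2 e^{-u^2/2} = 2/e$ (attained at $u = \sqrt{2}$), the mean value theorem yields $|h(\sigma_1) - h(\sigma_2)| \leq (2/e)\, |\sigma_1 - \sigma_2|/\sigma^\star$ for some $\sigma^\star$ between $\sigma_1$ and $\sigma_2$. Using the ratio assumption, $\sigma^\star \geq \sigma_2 \geq \sigma_1/2 = (\sigma_1 \vee \sigma_2)/2$, so this bracket is at most $(4/e)|\sigma_1-\sigma_2|/(\sigma_1 \vee \sigma_2) \leq 4|\sigma_1 - \sigma_2|/(\sigma_1 \vee \sigma_2)$. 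Summing the two contributions and taking $\sup_{x}$ (the bounds are already uniform in $x$) gives the claim. There is no conceptual obstacle here; the only minor subtlety is handling the intermediate $\sigma^\star$ in the scale-change term, which is resolved by the hypothesis $1/2 \leq \sigma_1/\sigma_2 \leq 2$ and costs at most the factor of $2$ that absorbs $2/e$ into the stated constant $4$.
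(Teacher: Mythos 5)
Your proof is correct and follows essentially the same route as the paper: insert the intermediate term $\varphi((x-\mu_2)/\sigma_1)$, bound the location change by the Lipschitz constant of $\varphi$, and bound the scale change via $\sup_u u^2 e^{-u^2/2} = 2/e$ together with the ratio hypothesis. The only difference is the direction of the without-loss-of-generality normalization (you take $\sigma_1 = \sigma_1 \vee \sigma_2$, the paper takes $\sigma_1 \leq \sigma_2$), and your choice in fact makes the location term come out with the stated denominator $\sigma_1 \vee \sigma_2$ with no extra constant.
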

\begin{proof}
  Without loss of generality we can assume that $\sigma_1 \leq \sigma_2$.  Using
  the triangle inequality, we first write
  \begin{multline}
    \left|\varphi\left(\frac{x - \mu_1}{\sigma_1}\right) -
      \varphi\left(\frac{x - \mu_2}{\sigma_2}\right)\right|\\
    \begin{aligned}
      &\leq \left|\varphi\left(\frac{x - \mu_1}{\sigma_1}\right) -
        \varphi\left(\frac{x - \mu_2}{\sigma_1}\right)\right| +
      \left|\varphi\left(\frac{x - \mu_2}{\sigma_1}\right) -
        \varphi\left(\frac{x - \mu_2}{\sigma_2}\right)\right|\\
      &\leq \sup_{u\in \mathbb R}\left|\varphi\left(u + \frac{\mu_1 -
            \mu_2}{\sigma_1} \right) - \varphi(u)\right| + \sup_{u\in \mathbb
        R}\left| \varphi\left(\frac{\sigma_1}{\sigma_2}u\right) - \varphi(u)
      \right|.
    \end{aligned}
    \label{eq:3}
  \end{multline}
  The first term of the rhs of \cref{eq:3} is obviously bounded by
  $|\mu_1 - \mu_2| / \sigma_1$. Regarding the second term of the rhs of
  \cref{eq:3},
  \begin{equation*}
    \left|
      \varphi\left(\frac{\sigma_1}{\sigma_2}u\right) - \varphi(u) \right| \leq
    |\sigma_1/\sigma_2 - 1| \left( \frac{\sigma_1}{\sigma_2} \vee 1\right)^2
    \sup_{x} x^2 \varphi( x),
  \end{equation*}
  which terminates the proof.
\end{proof}

\begin{proposition}
  \label{pro:7}
  Let $X \sim \mathrm{SGa}(\alpha,1)$, with $0 < \alpha \leq 1$. Then for
  any $x \in \mathbb R$ and any $0< \delta \leq 1/2$ we have
  $\Pr\Set{|X - x| \leq \delta} \geq \frac{\delta e^{-2|x|}}{3e\Gamma(\alpha)}$.
\end{proposition}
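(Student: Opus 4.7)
The plan is to reduce to $x\geq 0$ by symmetry, lower bound the density $f_X$ of $X\sim\mathrm{SGa}(\alpha)$ pointwise by $e^{-2x}/(4\Gamma(\alpha))$, and then integrate using monotonicity of $f_X$.

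First, since the symmetric Gamma distribution is symmetric about the origin, $\Pr\{|X-x|\leq \delta\}$ depends only on $|x|$, so I would assume $x\geq 0$ without loss of generality.

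Second, writing $X = Y_1 - Y_2$ with $Y_1,Y_2$ independent $\Gamma(\alpha,1)$, the density of $X$ at $x\geq 0$ is
\[
f_X(x) \;=\; \frac{e^{-x}}{\Gamma(\alpha)^2}\int_0^\infty (x+y)^{\alpha-1} y^{\alpha-1} e^{-2y}\,dy.
\]
I would lower bound this by restricting $y\in[0,1]$. Since $\alpha\leq 1$, the map $y\mapsto (x+y)^{\alpha-1}$ is non-increasing, so $(x+y)^{\alpha-1}\geq (x+1)^{\alpha-1}$ on $[0,1]$. Changing variables $u=2y$ and using $\int_2^\infty u^{\alpha-1}e^{-u}\,du\leq e^{-2}$ together with $\Gamma(\alpha)\geq 1$ on $(0,1]$ gives $\int_0^1 y^{\alpha-1}e^{-2y}\,dy \geq \Gamma(\alpha)/4$. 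Combined with the elementary inequality $(x+1)^{\alpha-1}\geq e^{-x(1-\alpha)}\geq e^{-x}$ (which follows from $x+1\leq e^x$ and $1-\alpha\in[0,1]$), this yields
\[
f_X(x) \;\geq\; \frac{e^{-2x}}{4\Gamma(\alpha)}, \qquad x\geq 0.
\]

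Third, I would show that $f_X$ is non-increasing on $[0,\infty)$ by differentiating under the integral sign: both contributions have sign $\leq 0$ because $\alpha-1\leq 0$. Combined with evenness, this forces $\inf_{t\in[x-\delta,x+\delta]} f_X(t) = f_X(x+\delta)$ for every $x\geq 0$, hence
\[
\Pr\{|X-x|\leq \delta\} \;\geq\; 2\delta\,f_X(x+\delta) \;\geq\; \frac{\delta e^{-2(x+\delta)}}{2\Gamma(\alpha)} \;\geq\; \frac{\delta e^{-2x}}{2e\,\Gamma(\alpha)},
\]
using $\delta\leq 1/2$, and the claim follows from $1/(2e)\geq 1/(3e)$. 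The only delicate point will be tracking the constants tightly enough to land in $1/(3e)$: both the sharp-ish integral bound $\int_0^1 y^{\alpha-1}e^{-2y}\,dy \geq \Gamma(\alpha)/4$ and the use of the full two-sided interval $[x-\delta,x+\delta]$ (rather than a one-sided bound) are needed to avoid losing a further factor of $e$.
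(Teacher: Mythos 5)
Your proof is correct, and it reaches the stated constant, but it is organized differently from the paper's. Both arguments rest on the representation $X=Y_1-Y_2$ with $Y_i\sim\mathrm{Ga}(\alpha,1)$ independent and on the inequality $t^{\alpha-1}\ge e^{-t}$ for $\alpha\le 1$ (which you use in the guise $(1+x)^{\alpha-1}\ge e^{-x}$). The paper, however, never passes through the density: it bounds the \emph{one-sided} probability $\Pr\{x\le X\le x+\delta\}$ directly as $\Gamma(\alpha)^{-2}\int_0^\infty y^{\alpha-1}e^{-y}\int_{x+y}^{x+y+\delta}z^{\alpha-1}e^{-z}\,dz\,dy$, lower-bounds the inner integral by $\delta\,e^{-2(x+y+\delta)}$ via monotonicity of $z\mapsto z^{\alpha-1}e^{-z}$, and then integrates $y$ over all of $(0,\infty)$, which yields the clean factor $3^{-\alpha}\Gamma(\alpha)\ge\Gamma(\alpha)/3$ and hence $1/(3e)$ with no numerical estimates. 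You instead derive the pointwise density bound $f_X(x)\ge e^{-2x}/(4\Gamma(\alpha))$ by truncating the convolution integral to $y\in[0,1]$ (whence the need for the estimate $\int_0^1 y^{\alpha-1}e^{-2y}\,dy\ge 2^{-\alpha}(\Gamma(\alpha)-e^{-2})\ge\Gamma(\alpha)/4$, which is valid since $\Gamma\ge 1$ on $(0,1]$), prove separately that $f_X$ is even and non-increasing on $[0,\infty)$, and recover the constant by using the full two-sided interval. As you anticipated, both of those ingredients are genuinely needed in your route to compensate for the loss incurred by truncating at $y=1$; the paper's version is slightly shorter because integrating $y$ over the whole half-line avoids that loss. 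A modest benefit of your route is that it isolates an explicit pointwise lower bound on the symmetric Gamma density, which is reusable.
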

\begin{proof}
  Assume for instance that $x \geq 0$. Recalling that $X$ is distributed as the
  difference of two independent $\mathrm{Ga}(\alpha,1)$ distributed random
  variables, it follows
  \begin{align*}
    \Pr\Set{|X - x| \leq \delta}
    &\geq
      \frac{1}{\Gamma(\alpha)} \int_0^{\infty} y^{\alpha -
      1}e^{-y}
      \frac{1}{\Gamma(\alpha)}
      \int_{x+y}^{x+y + \delta} z^{\alpha-1}e^{-z}\, dz\, dy.
  \end{align*}
  Because $\alpha \leq 1$, the mapping $z \mapsto z^{\alpha-1}e^{-z}$ is
  monotonically decreasing on $\mathbb R^+$, then the last integral in the rhs
  of the previous equation is lower bounded by
  $\delta (x+y+\delta)^{\alpha -1}e^{-(x+y+\delta)} \geq \delta
  e^{-2(x+y+\delta)}$. Then
  \begin{align*}
    \Pr\Set{|X - x| \leq \delta}
    &\geq
      \frac{\delta e^{-2(x+\delta)}}{\Gamma(\alpha)^2} \int_0^{\infty}
      y^{\alpha - 1}e^{-3y}\, dy
      \geq \frac{3^{-\alpha}e^{-2(x+\delta)}}{\Gamma(\alpha)}\delta
      \geq \frac{\delta e^{-2|x|}}{3e\Gamma(\alpha)}.
  \end{align*}
  The proof when $x < 0$ is obvious.
\end{proof}

\bibliographystyle{abbrvnat}
\bibliography{bib/biblio-paper}

\end{document}